\documentclass[11pt,letterpaper]{article}
\usepackage[margin=1in]{caption}
\usepackage{geometry}
\usepackage{amsmath,amssymb,color, mathtools, tikz, tikz-cd, multicol,subfig}
\usepackage{amscd}
\usepackage{amsthm}
\numberwithin{equation}{section}

\newtheorem{theorem}{Theorem}[section]
\newtheorem{proposition}[theorem]{Proposition}

\newtheorem{question}[theorem]{Question}
\newtheorem{corollary}[theorem]{Corollary}

\newtheorem{lemma}[theorem]{Lemma}

\theoremstyle{definition}
\newtheorem{definition}[theorem]{Definition}
\newtheorem{example}[theorem]{Example}
\newtheorem{construction}[theorem]{Construction}
\newtheorem{remark}[theorem]{Remark}

\newcommand{\merge}{\scalebox{1.2}{$\hspace{0.1em}\triangleright$}}
\newcommand{\conv}{\mathrm{conv}}
\newcommand{\aff}{\mathrm{aff}}
\newcommand{\dist}{\mathrm{dist}}

\newcommand{\R}{\mathbb{R}}
\newcommand{\lattice}{\mathcal{L}}
\newcommand{\CP}{\mathrm{CP}}

\newcommand{\inter}{\mathrm{int}}

\title{The merging operation and $(d-i)$-simplicial $i$-simple $d$-polytopes}
\author{
	Isabella Novik\thanks{Research of IN is partially\textsl{} supported by NSF grant DMS-1953815 and by Robert R.~\&  Elaine F.~Phelps Professorship in Mathematics. }\\
	\small Department of Mathematics\\[-0.8ex]
	\small University of Washington\\[-0.8ex]
	\small Seattle, WA 98195-4350, USA\\[-0.8ex]
	\small \texttt{novik@uw.edu}
	\and 
	Hailun Zheng\\
	\small Department of Mathematics \& Statistics\\[-0.8ex]
	\small University of Houston-Downtown\\[-0.8ex]
	\small One Main Street, Houston, TX 77002, USA \\[-0.8ex]
	\small \texttt{zhengh@uhd.edu}
}
\begin{document}
\maketitle
\begin{center}{\em Dedicated to G\"unter M.~Ziegler on the occasion of his $60$th birthday.}\end{center}

\begin{abstract}
We define a certain merging operation that given two $d$-polytopes $P$ and $Q$ such that $P$ has a simplex facet $F$ and $Q$ has a simple vertex $v$ produces a new $d$-polytope $P\merge Q$ with $f_0(P)+f_0(Q)-(d+1)$ vertices. We show that if for some $1\leq i\leq d-1$, $P$ and $Q$ are $(d-i)$-simplicial $i$-simple $d$-polytopes,  then  so is $P\merge Q$. We then use this operation to construct new families of $(d-i)$-simplicial $i$-simple $d$-polytopes. Specifically, we prove that for all $2\leq i \leq d-2\leq 6$ with the exception of $(i,d)=(3,8)$ and $(5,8)$, there is an infinite family of $(d-i)$-simplicial  $i$-simple $d$-polytopes; furthermore,  for all $2\leq i\leq 4$, there is an infinite family of self-dual $i$-simplicial $i$-simple $2i$-polytopes. Finally, we show that for any $d\geq 4$, there are $2^{\Omega(N)}$ combinatorial types of  $(d-2)$-simplicial $2$-simple $d$-polytopes with at most $N$ vertices.
\end{abstract}

	\section{Introduction}
	 This paper is devoted to the fascinating class of $(d-i)$-simplicial $i$-simple $d$-polytopes.
	 
	 A polytope is the convex hull of finitely many points in $\R^d$. The $f$-vector of a polytope encodes the number of faces of each dimension. For brevity, we refer to $d$-dimensional polytopes as $d$-polytopes. A $d$-polytope $P$ is called simplicial if every facet of $P$ contains exactly $d$ vertices. Similarly, a $d$-polytope $P$ is simple, if every vertex of $P$ is in exactly $d$ facets. A polytope that is neither simplicial nor simple is called a general polytope.

	 While polytopes have been studied since antiquity, it is not an exaggeration to say that we still know very little about them.  To be more precise, various combinatorial invariants such as the face lattice and the $f$-vectors of $3$-polytopes are well understood thanks to the Steinitz theorem \cite{Steinitz}. In higher dimensions, the celebrated $g$-theorem provides a complete characterization of the $f$-vectors of simplicial and simple polytopes \cite{BilleraLee,Stanley80}. Yet, at present, the problem of characterizing the $f$-vectors of general $d$-polytopes for $d\geq 4$ is completely out of reach already in dimension four; see, for instance, \cite{Bayer87, BrinkZieg, PafWer,Zieg02,Zieg04}.

One may wonder if studying the $f$-vectors of general polytopes that share some properties with both simplicial and simple polytopes might be a more approachable task. To this end, a $d$-polytope is called {\em $(d-i)$-simplicial} if all of its $(d-i)$-faces are simplices, and it is {\em $i$-simple} if every $(d-i-1)$-face is contained in exactly $i+1$ facets. In particular,  the class of $(d-1)$-simplicial $1$-simple $d$-polytopes coincides with the class of simplicial $d$-polytopes, while the class of $1$-simplicial $(d-1)$-simple $d$-polytopes is the class of simple $d$-polytopes. In other words, as $i$ varies between $1$ and $d-1$, $(d-i)$-simplicial $i$-simple $d$-polytopes interpolate between the class of simplicial $d$-polytopes and the class of simple $d$-polytopes. It is also worth mentioning that the $f$-vector of a $2$-simplicial $2$-simple $4$-polytope is symmetric.

Do $(d-i)$-simplicial $i$-simple $d$-polytopes  exist when $2\leq i\leq d-2$? (Cf.~\cite[Problem 19.5.23]{Kalai-skeletons}.) While various conjectures (see, for instance \cite[Exercise 9.7.7(iii)]{Gru-book}) suggest that there should be many such polytopes, not much is known. The first infinite family of $2$-simplicial $2$-simple $4$-polytopes was constructed by Eppstein, Kuperberg and Ziegler \cite{EKZ}. Their approach was generalized by Paffenholz and Ziegler \cite{PafZieg} who established the existence of infinite families of $(d-2)$-simplicial $2$-simple $d$-polytopes for all $d\geq 4$. Notably, the minimum number of vertices in their $d$-dimensional construction is $2(d+1)$, realized by $\conv(\Sigma\cup \Sigma^*)$, where $\Sigma$ is a $d$-simplex whose $(d-3)$-faces are tangent to the unit sphere $\mathbb{S}^{d-1}$. Additional infinite families of $2$-simplicial $2$-simple $4$-polytopes were constructed by Paffenholz and Werner \cite{PafWer}: all their polytopes are elementary (i.e., have $g^{\mathrm{toric}}_2=0$) and have at least one simplex facet.

As for larger values of $i$, the $d$-dimensional demicube with $d\geq 4$ (also known as the half-cube) is $3$-simplicial $(d-3)$-simple while its dual is $(d-3)$-simplicial $3$-simple (see \cite[Exercise 4.8.18]{Gru-book}). Furthermore, the Gosset--Elte polytopes that arise from Wythoff's construction provide finitely many examples of $(d-i)$-simplicial $i$-simple $d$-polytopes for $d\leq 8$ and $2\leq i\leq d-2$ \cite{Coexter}. These are essentially all known to-date examples of $(d-i)$-simplicial $i$-simple $d$-polytopes with  $2\leq i\leq d-2$. In particular, it is not known whether a $5$-simplicial $5$-simple $10$-polytope exists. In light of this, we further pose the following questions.

\begin{question}\label{Question 1} \quad
	\begin{enumerate}
		\item Let $d\geq 4$. What is the minimum number of vertices that a non-simplex $(d-2)$-simplicial $2$-simple $d$-polytope can have?
		\item Let $d\geq 6$ and let $3\leq i\leq d/2$. Are there infinite families of $(d-i)$-simplicial $i$-simple $d$-polytopes? What is the minimum number of vertices that such a non-simplex polytope can have?
	\end{enumerate}
\end{question}

The goal of this paper is to provide new infinite families of $(d-i)$-simplicial $i$-simple $d$-polytopes for some values of $i$ and $d$. To achieve this, we define a certain merging operation that given two $d$-polytopes $P$ and $Q$, where $P$ has a simplex facet and $Q$ has a simple vertex, outputs a new $d$-polytope. This operation is modeled on a familiar notion of connected sums of simplicial polytopes, but designed in a way that preserves the property of being $(d-i)$-simplicial $i$-simple. Using this operation, we establish the following results:

\begin{enumerate}
	\item   There exist infinite families of $(d-i)$-simplicial $i$-simple d-polytopes for all pairs $(i,d)$ such that $2\leq i\leq d-2\leq 6$ and $(i,d)$ is not $(3,8)$ or $(5,8)$; see Theorem \ref{thm: lower dimensional infinite families}. This partially answers Question \ref{Question 1}(2) and \cite[Problem 19.5.23]{Kalai-skeletons}.
	
	\item There exist infinite families of self-dual $i$-simplicial $i$-simple $2i$-polytopes for $2 \leq i\leq 4$; see Theorem \ref{thm:inf-many-self-dual}. This partially answers \cite[Problem 19.5.24]{Kalai-skeletons}.
	
	\item For all $d\geq 4$, there are $2^{\Omega(N)}$ combinatorial types of $(d-2)$-simplicial $2$-simple $d$-polytopes with at most $N$ vertices; see Theorem \ref{prop: counting combinatorial types}.
\end{enumerate}

To prove the last result, we construct a higher-dimensional analog of the unique $2$-simplicial $2$-simple $4$-polytope with nine vertices. (This $4$-polytope is called $P_9$ in \cite{PafWer}; it has the minimum number of vertices among all non-simplex $2$-simplicial $2$-simple $4$-polytopes.) We then apply the merging operation to produce new infinite families of $(d-2)$-simplicial $2$-simple $d$-polytopes.

As for the second result, several examples of (non-simplex) self-dual $2$-simplicial $2$-simple $4$-polytopes were known before, among them polytopes $P_9$ and $P_{10}$ from \cite{PafWer}. In fact, \cite{Paff} provides a (different) infinite family of self-dual $2$-simplicial $2$-simple $4$-polytopes, that for instance, includes the $24$-cell.  An interesting infinite family of self-dual $d$-polytopes that are neither $j$-simplicial nor $i$-simple (for any $d\geq3$ and $j,i>1$) is the family of multiplexes constructed by Bisztriczky \cite{Bisztr}.

The outline of the paper is as follows. We review several definitions related to polytopes and face lattices in Section 2. Section 3 serves as a warm-up section where we discuss the minimum number of vertices that a non-simplex $3$-simplicial $2$-simple $5$-polytope can have. In Section 4, we introduce and study the merging operation that applies to pairs of polytopes one of which has a simplex facet and another a simple vertex. This operation has several interesting properties;  see, for instance, Theorem \ref{main theorem} and Theorem \ref{prop: face lattice of the merge}. Sections 5 and 6 form the most crucial part of this paper: there, we utilize the merging operation and its properties to provide our promised constructions of new $(d-i)$-simplicial $i$-simple $d$-polytopes. Specifically, in Section 5.1, we construct infinite families of $(d-i)$-simplicial $i$-simple $d$-polytopes for $d\leq 8$. In Section 5.2, we construct infinite families of self-dual $i$-simplicial $i$-simple $2i$-polytopes for $i\leq 4$. In Section 6.1, we revisit the $2$-simplicial $2$-simple $4$-polytopes providing several new constructions. Finally, in Section 6.2, we produce a higher-dimensional analog of $P_9$ and use it to construct exponentially many (in $N$) combinatorial types of $(d-2)$-simplicial $2$-simple $d$-polytopes with at most $N$ vertices.

	\section{Preliminaries}
	A {\em polytope} $P \subseteq \R^d$ is the convex hull of a finite set of points in $\R^d$. The {\em dimension} of $P$ is the dimension of the affine span of $P$. For brevity, we say that $P$ is a {\em $d$-polytope} if $P$ is $d$-dimensional. In what follows, we always assume that $P \subseteq \R^d$ is a $d$-polytope. 
	
	A hyperplane $H\subseteq \R^d$ is a {\em supporting hyperplane} of $P$ if $P$ is contained in one of the two closed half-spaces determined by $H$. A {\em  (proper) face of $P$} is the intersection of $P$ with any supporting hyperplane of $P$. A face of a polytope is by itself a polytope. We refer to $(d-1)$-faces of $P$ as {\em facets} of $P$, to $(d-2)$-faces as {\em ridges}, to $1$-faces as {\em edges}, and to $0$-faces as {\em vertices}. We denote by $V(P)$ the vertex set of $P$. If $V(P)$ consists of $d+1$ affinely independent points, then $P$ is a {\em $d$-simplex}; we denote it by $\sigma_d$. 
    
    The face poset of $P$, $\lattice(P)$, is the set of faces of $P$ (including $P$ and $\emptyset$) ordered by inclusion. The face poset of $P$ is a lattice. We usually write the maximum element of $\lattice(P)$ (namely, $P$) as $\hat{1}$ and the minimum element (namely, $\emptyset$) as $\hat{0}$. For a subset $S$ of $\lattice(P)$, we let $\vee S$ and $\wedge S$ denote the join and the meet of elements of $S$, respectively. 
		
		By using translation, if necessary, we can always assume that the origin lies in the interior of  $P$. The dual polytope $P^*$ of $P$ is then defined as  $$P^*=\{y\in \R^d: \;y^tx\leq 1, \;\forall x\in P\}.$$
	For every $P$, there is an {\em order-reversing} bijective map $\phi: \lattice(P)\to \lattice (P^*)$; by slight abuse of notation, we also denote by $\phi$ the (inverse) map $\lattice(P^*)\to \lattice (P^{**})=\lattice (P)$. We say that $P$ is {\em self-dual} if $\lattice(P)$ and $\lattice(P^*)$ are isomorphic (i.e., there is an {\em order-preserving} bijective map).
	
	Let $1\leq i\leq d-1$. A $d$-polytope $P$ is {\em $i$-simplicial} if all of its $i$-faces are simplices; equivalently, if all of its $i$-faces have $i+1$ vertices. Similarly, $P$ is {\em $i$-simple} if every $(d-i-1)$-face is contained in exactly $i+1$ facets. The class of $(d-1)$-simplicial $d$-polytopes is known as the class of simplicial $d$-polytopes, while the class of $(d-1)$-simple $d$-polytopes is known as the class of simple $d$-polytopes. In particular, if $P$ is $i$-simplicial, then the interval $[\hat{0}, \tau]$ is a Boolean lattice for any face $\tau$ with $\dim \tau \leq i$. Likewise, if $P$ is $i$-simple, then $[\tau, \hat{1}]$ is Boolean for any face $\tau$ with $\dim \tau\geq d-i-1$. Hence $P$ is $i$-simplicial if and only if $P^*$ is $(d-i)$-simple. 
	
	If $v$ is a vertex of $P$, then the {\em vertex figure of $P$ at $v$}, denoted $P/v$, is the polytope obtained by intersecting $P$ with a hyperplane $H$ that has $v$ on one side and all other vertices of $P$ on the other side. The combinatorial type of $P/v$ does not depend on the choice of $H$. In fact, $\lattice(P/v)$ is exactly the interval $[v, \hat{1}]$ in $\lattice(P)$. We say that a vertex $v$ of a $d$-polytope $P$ is simple if $P/v$ is a simplex, or equivalently, if $v$ belongs to exactly $d$ facets of $P$.
	
	If $P$ is a simplicial polytope, then the collection of vertex sets of faces of $P$, including $\emptyset$ but not including $P$ itself, forms an {\em abstract simplicial complex} $\partial P$ called the {\em boundary complex} of $P$. When $V$ is a finite set, we let $\partial \overline{V}:=\{\tau\subset V : \tau\neq V\}$ denote the boundary complex of an abstract  simplex with vertex set $V$. 
	
	Consider a $d$-polytope $P\subset \R^d\times \{0\}$ and a $d'$-polytope $Q\subset \{0\}\times \R^{d'}$ such that the origin is in the relative interior of both $P$ and $Q$. The polytope $P\oplus Q:=\conv(P\cup Q)$ is called the {\em free sum} of $P$ and $Q$. All faces of $P\oplus Q$ are of the form $\conv(F\cup G)$, where $F\neq P$ is a face of $P$ and $G\neq Q$ is a face of $Q$. Consequently, if $P$ and $Q$ are simplicial polytopes then the boundary complex of $P\oplus Q$ coincides with the {\em join} of $\partial P$ and $\partial Q$: 
	$$\partial (P\oplus Q)=\partial P*\partial Q:=\{\sigma\cup \tau: \sigma\in \partial P, \tau\in\partial Q\}.$$
    
	For a $d$-polytope $P$, we let $f(P)=(f_0(P), f_1(P),\ldots,f_{d-1}(P))$ be the {\em $f$-vector} of $P$; here $f_i(P)$ denotes the number of $i$-faces of $P$. Also, for $0\leq i<j\leq d-1$, we let $f_{i,j}(P)$ denote the number of pairs of faces $F_i\subset F_j$ of $P$ such that $\dim F_i=i$ and $\dim F_j=j$.
		
		To conclude this section, we note that for all $0\leq i\leq d-1$, $f_i(P)=f_{d-i-1}(P^*)$. This is immediate from the existence of an order-reversing bijection $\phi: \lattice(P)\to \lattice (P^*)$.

	\section{A warm-up: the minimum number of vertices} 
	As mentioned in the introduction, for every $d\geq 4$, there exists a $(d-2)$-simplicial $2$-simple $d$-polytope with $2(d+1)$ vertices. Furthermore, for $d=4$, there is a $2$-simplicial $2$-simple $4$-polytope with only $9$ vertices. Are there $(d-2)$-simplicial $2$-simple $d$-polytopes with fewer than $2d+2$ vertices for $d>4$? (Cf.~Question \ref{Question 1}(1).) The goal of this warm-up section is to answer this question for $d=5$; see Proposition \ref{vertex number:d=5,i=2}. To do this, we first establish a criterion  that the $f$-vectors of $(d-i)$-simplicial $i$-simple $d$-polytopes (if they exist) must satisfy; cf.~\cite[Exercise 9.7.7(ii)]{Gru-book}. We include the proof for completeness.
	\begin{lemma}\label{lm: f-vector criterion}
		 Let $d\geq 2$ and $1\leq i\leq d-1$. Let $P$ be a $(d-i)$-simplicial $d$-polytope. Then $P$ is $i$-simple if and only if $(d-i+1)f_{d-i}(P)=(i+1)f_{d-i-1}(P).$
	\end{lemma}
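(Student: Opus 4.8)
The plan is to carry out a double-counting argument on the flag-type quantity $f_{d-i-1,d-i}(P)$, the number of incident pairs $(\tau, \rho)$ where $\tau$ is a $(d-i-1)$-face and $\rho$ is a $(d-i)$-face of $P$ containing it. Counting such pairs in two ways gives two expressions: one grouped by the $(d-i)$-faces $\rho$, and one grouped by the $(d-i-1)$-faces $\tau$.

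First I would use the hypothesis that $P$ is $(d-i)$-simplicial. Every $(d-i)$-face $\rho$ is then a $(d-i)$-simplex, so it contains exactly $\binom{d-i+1}{d-i} = d-i+1$ faces of dimension $d-i-1$. Hence, summing over all $(d-i)$-faces,
\[
f_{d-i-1,d-i}(P) = (d-i+1) f_{d-i}(P).
\]
This direction needs no assumption beyond $(d-i)$-simpliciality and is the easy half.

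Next I would analyze the count grouped by $(d-i-1)$-faces $\tau$: for each such $\tau$, I need to know how many $(d-i)$-faces contain it. Here I would pass to the interval $[\tau, \hat 1]$ in $\lattice(P)$, which is the face lattice of the quotient polytope $P/\tau$ of dimension $d - (d-i-1) - 1 = i$. The $(d-i)$-faces of $P$ containing $\tau$ correspond to the vertices of $P/\tau$, and the facets of $P$ containing $\tau$ correspond to the facets of $P/\tau$. Now the key point: $P$ is $i$-simple precisely when each such $\tau$ lies in exactly $i+1$ facets, i.e. when each $P/\tau$ is an $i$-simplex (so the interval $[\tau,\hat 1]$ is Boolean, as noted in the preliminaries). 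An $i$-simplex has $i+1$ vertices; conversely, an $i$-polytope has at least $i+1$ vertices, with equality iff it is a simplex. So $\sum_\tau (\text{number of } (d-i)\text{-faces} \supseteq \tau) \geq (i+1) f_{d-i-1}(P)$ always, with equality iff every $P/\tau$ is a simplex, iff $P$ is $i$-simple. Combining with the first count yields
\[
(d-i+1) f_{d-i}(P) = f_{d-i-1,d-i}(P) \geq (i+1) f_{d-i-1}(P),
\]
and equality holds iff $P$ is $i$-simple, which is exactly the claimed equivalence.

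The main obstacle — really the only subtlety — is making precise the bijections between faces of $P$ containing $\tau$ and faces of the quotient $P/\tau$, and invoking the correct characterization of simplices among $i$-polytopes in terms of their vertex count (an $i$-polytope has exactly $i+1$ vertices iff it is a simplex). One should also handle small or degenerate ranges carefully (e.g. $i = d-1$, where $(d-i-1)$-faces are vertices and the statement reduces to the standard simple-polytope identity, and $d-i = 1$, where $(d-i)$-faces are edges and $2f_1(P) = (i+1)f_0(P)$ with $i = d-1$). Everything else is routine double counting, so I would keep the write-up short, stating the two counts of $f_{d-i-1,d-i}(P)$ and reading off both the identity and the equivalence from the simplex characterization.
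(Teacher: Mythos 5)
Your proposal is correct and follows essentially the same double-counting argument as the paper: both count the flag number $f_{d-i-1,d-i}(P)$ via the $(d-i)$-faces (using simpliciality) and via the $(d-i-1)$-faces (using the lower bound $i+1$, with equality characterizing $i$-simplicity). The only difference is that you spell out, via the quotient polytope $P/\tau$, why ``contained in exactly $i+1$ faces of dimension $d-i$'' is equivalent to ``contained in exactly $i+1$ facets,'' a step the paper leaves implicit.
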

	\begin{proof}
		If $P$ is $(d-i)$-simplicial, then every $(d-i)$-face of $P$ is a simplex; hence, every $(d-i)$-face contains $d-i+1$ faces of dimension $d-i-1$. This means that $f_{d-i-1,d-i}(P)=(d-i+1)f_{d-i}(P)$. On the other hand, a $(d-i-1)$-face of any $d$-polytope is contained in at least $i+1$ faces of dimension $d-i$. Thus, $f_{d-i-1,d-i}(P)\geq (i+1)f_{d-i-1}(P)$, and we conclude that $(d-i+1)f_{d-i}(P)=f_{d-i-1,d-i}(P) \geq (i+1)f_{d-i-1}(P)$. Furthermore, equality holds if and only if every $(d-i-1)$-face is in exactly $i+1$ faces of dimension $d-i$ which happens if and only if $P$ is $i$-simple.	
	\end{proof}
	
	\begin{corollary} \label{cor:equality}
		For all $i\geq 1$, an $i$-simplicial $2i$-polytope $P$ is $i$-simple if and only if $f_{i-1}(P)=f_i(P)$.
	\end{corollary}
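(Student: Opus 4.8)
The plan is simply to specialize Lemma~\ref{lm: f-vector criterion} to the middle-dimensional case $d=2i$. First I would verify that the hypotheses of that lemma are satisfied: it requires $d\geq 2$ and $1\leq i\leq d-1$, and when $d=2i$ both of these conditions collapse to $i\geq 1$, which is exactly the standing assumption of the corollary. Moreover, with $d=2i$ the quantity $d-i$ equals $i$, so ``$(d-i)$-simplicial'' becomes ``$i$-simplicial''; hence the hypothesis that $P$ is an $i$-simplicial $2i$-polytope is precisely what is needed to apply the lemma.

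Next I would substitute $d=2i$ into the numerical criterion supplied by the lemma. Since $d-i=i$, $d-i+1=i+1$, and $d-i-1=i-1$, the identity $(d-i+1)f_{d-i}(P)=(i+1)f_{d-i-1}(P)$ becomes $(i+1)f_i(P)=(i+1)f_{i-1}(P)$. As $i+1\geq 2>0$, dividing through by $i+1$ shows that this equation is equivalent to $f_i(P)=f_{i-1}(P)$. Combining this equivalence with the lemma's statement that $P$ is $i$-simple if and only if the displayed identity holds gives exactly the asserted characterization.

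I do not expect any genuine obstacle: the corollary is a direct transcription of Lemma~\ref{lm: f-vector criterion} in the case $d=2i$. The only points requiring any care are keeping the index correspondence $d-i\leftrightarrow i$ straight and observing that the common factor $i+1$ is nonzero, so that it may be cancelled to pass from the weighted identity to the plain equality $f_{i-1}(P)=f_i(P)$.
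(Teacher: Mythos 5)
Your proposal is correct and is exactly the intended derivation: the paper states the corollary as an immediate specialization of Lemma~\ref{lm: f-vector criterion} to $d=2i$ without further argument, and your substitution $(i+1)f_i(P)=(i+1)f_{i-1}(P)$ followed by cancellation of the nonzero factor $i+1$ is precisely that specialization.
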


	\begin{proposition} \label{vertex number:d=5,i=2}
		The minimum number of vertices that a non-simplex $3$-simplicial $2$-simple $5$-polytope can have is $12$.
	\end{proposition}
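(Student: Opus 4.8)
The plan is to prove matching bounds. For the upper bound, the Paffenholz--Ziegler polytope $\conv(\Sigma\cup\Sigma^*)$, with $\Sigma$ a $5$-simplex whose $2$-faces are tangent to $\mathbb{S}^{4}$, is $3$-simplicial, $2$-simple, and has $2(5+1)=12$ vertices, so $12$ is attained; the bulk of the work is to show that any non-simplex $3$-simplicial $2$-simple $5$-polytope $P$ has $f_0(P)\ge 12$.

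I would first record two reductions. $P$ is not simplicial: otherwise $\partial P$ is a simplicial $4$-sphere and combining $4f_3(P)=3f_2(P)$ (Lemma~\ref{lm: f-vector criterion}), Euler's relation, the Dehn--Sommerville equations, and $g_2\ge 0$ (the lower bound theorem) forces $f_0(P)=6$, i.e.\ $P=\sigma_5$. And $P$ is not simple: otherwise each facet is a simple $4$-polytope whose own facets are $3$-faces of $P$, hence $3$-simplices, and a $4$-polytope that is both simple and simplicial is $\sigma_4$, so $P$ would be simplicial. Next, every vertex figure $P/u$ is a $2$-simplicial $2$-simple $4$-polytope: its $2$-faces are vertex figures of the tetrahedral $3$-faces of $P$ through $u$, hence triangles, and its edges are vertex figures of $2$-faces of $P$ through $u$, each lying in exactly three facets. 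Picking a non-simple vertex $v$, we get $P/v\ne\sigma_4$, so $f_0(P/v)\ge 9$ by the minimality of $P_9$ among non-simplex $2$-simplicial $2$-simple $4$-polytopes \cite{PafWer}; thus $\deg_P v\ge 9$ and $f_0(P)\ge 10$. Moreover $f_3(P/u)=f_0(P/u)=\deg_P u$ for every $u$ (Corollary~\ref{cor:equality} together with Euler), and since the subcomplex of $\partial P$ consisting of the faces missing $v$ is a $4$-ball it contains a facet of $P$ avoiding $v$; hence $f_4(P)\ge f_3(P/v)+1=\deg_P v+1\ge 10$.

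The third ingredient is the inequality $f_4(P)\ge\tfrac32\bigl(f_0(P)-2\bigr)$. From $f_3(P/u)=\deg_P u$ we obtain $f_{0,4}(P)=\sum_u\deg_P u=2f_1(P)$; each facet $F$ of $P$ is a simplicial $4$-polytope, so the lower bound theorem and the Dehn--Sommerville relations for $\partial F$ give $f_3(F)\ge 3f_0(F)-10$, whence $2f_3(P)=f_{3,4}(P)=\sum_F f_3(F)\ge 3f_{0,4}(P)-10f_4(P)=6f_1(P)-10f_4(P)$; substituting this into Euler's relation and $4f_3(P)=3f_2(P)$ yields the bound. In particular, if $\deg_P v\ge 11$ then $f_0(P)\ge 12$; so we may assume $\deg_P v\in\{9,10\}$, and if in addition $f_0(P)\le 11$ then $f_0(P)\in\{10,11\}$ while $P/v$ is $P_9$ (when $\deg_P v=9$) or a $2$-simplicial $2$-simple $4$-polytope with $10$ vertices (when $\deg_P v=10$).

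The remaining, and hardest, step is to exclude $f_0(P)\in\{10,11\}$. When $f_0(P)=10$, every non-simple vertex has degree exactly $9$ and is therefore adjacent to all other vertices; taking $v$ to be such a vertex, $P/v\cong P_9$, and one checks that if $u\ne v$ is simple in $P$ then the corresponding vertex of $P/v$ is simple in $P_9$, so $P$ has at most as many simple vertices as $P_9$ does. Feeding this into the degree count $2f_1(P)=\sum_u\deg_P u$ (non-simple vertices contributing $9$ each) and into $f_4(P)\ge 12$ pins down the partition of $V(P)$ into ``universal'' and simple vertices, and then tracing which triangles through $v$ are $2$-faces of $P$ shows that in $P_9$ some edge would be forced to lie in more than three $2$-faces, contradicting the $2$-simplicity of $P_9$. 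The case $f_0(P)=11$ is treated analogously, now splitting according to whether $v$ is universal (with $P/v$ a $10$-vertex $2$-simplicial $2$-simple $4$-polytope) or has a unique non-neighbour $w$ (with $P/v\cong P_9$) and then analysing the vertex figures at $w$ and at the other vertices. This final case analysis, which leans on precise combinatorial data for $P_9$ and for the $2$-simplicial $2$-simple $4$-polytopes with $10$ vertices, is where the real difficulty lies.
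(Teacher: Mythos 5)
Your upper bound and the reduction to $f_0(P)\ge 10$ match the paper (vertex figures are $2$-simplicial $2$-simple $4$-polytopes, a non-simple vertex has a non-simplex vertex figure, hence degree $\ge 9$ by the minimality of $P_9$), and your auxiliary inequality $f_4(P)\ge\tfrac32(f_0(P)-2)$ is correctly derived. But the heart of the proposition is ruling out $f_0(P)\in\{10,11\}$, and there your proposal has a genuine gap: you explicitly defer this to an unexecuted case analysis ("one checks that\ldots", "pins down the partition\ldots", "tracing which triangles\ldots shows that\ldots some edge would be forced to lie in more than three $2$-faces", "treated analogously"). None of these steps is carried out, and as written it is a plan rather than a proof; it is far from clear that the edge-in-too-many-triangles contradiction you gesture at actually materializes, and the $11$-vertex case is not even sketched beyond "analogously".

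The paper closes these cases by an $f$-vector computation that your outline is missing two key ingredients for. First, for $f_0=10$ it passes to the dual: each facet of $Q^*$ is $\sigma_4$ or $P_9$ (both self-dual), and since a $P_9$-facet has seven non-simplex facets, each shared with another non-simplex facet of $Q^*$, at least eight facets of $Q^*$ are copies of $P_9$; back in $Q$ this forces every vertex to have degree $9$, so $f_1=45$. Second, it uses the symmetry of the toric $h$-vector of a $3$-simplicial $5$-polytope, $g_3^{\mathrm{toric}}=f_2-4f_1+10f_0-20=0$, together with $4f_3=3f_2$ and Euler, to pin down $f(Q)=(10,45,100,75,12)$, which contradicts $f_3\le\binom{f_4}{2}=66$; for $f_0=11$ the same relations give $f_2=4f_1-90\not\equiv 0\pmod 4$, incompatible with $f_3=\tfrac34 f_2\in\mathbb{Z}$. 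Neither the duality/degree-forcing step nor the toric $g_3=0$ identity (nor the ridge-counting bound $f_3\le\binom{f_4}{2}$) appears in your proposal, and without some substitute for them your final case analysis remains unproven. If you want to complete your route, you would need to actually carry out the combinatorial analysis of $P_9$ you allude to, which is likely much harder than the paper's computation.
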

	\begin{proof}
	There exists a $3$-simplicial $2$-simple 5-polytope with $2(5+1)=12$ vertices. Thus, we only need to show that there is no non-simplex $3$-simplicial $2$-simple $5$-polytope with fewer than $12$ vertices.
	
		It is known (see \cite{PafWer}) that every non-simplex $2$-simplicial $2$-simple $4$-polytope has at least $9$ vertices, and the only such polytope with $9$ vertices is the polytope denoted by $P_9$ in \cite{PafWer}. Since vertex figures of $3$-simplicial $2$-simple $5$-polytopes are $2$-simplicial $2$-simple, it follows that a non-simplex $3$-simplicial $2$-simple polytope $Q$ must have at least $10$ vertices. 
		
		Assume that $f_0(Q)=10$. Then each vertex figure is either the $4$-simplex $\sigma_4$ or $P_9$, and so each vertex of $Q$ has degree $5$ or $9$. Since $Q$ is not simple, at least one of the vertex figures of $Q$ is $P_9$. Consider $Q^*$; it has $10$ facets each of which is either $\sigma_4$ or $P_9$. (This is because both $\sigma_4$ and $P_9$ are self-dual.) Now consider a facet $F$ of $Q^*$ that is isomorphic to $P_9$. It has $7$ non-simplex facets (one cross-polytope, also known as an octahedron, and six bipyramids); Construction \ref{constr:P_9}. Each of these seven $3$-faces must lie in $F$ and one additional facet of $Q^*$, which cannot be a simplex. This shows that $Q^*$ has at least eight facets isomorphic to $P_9$. Then in $Q$, at least $8$ out of $10$ vertices are of degree $9$. This implies that all vertices of $Q$ have degree $\geq 8$.	Consequently, all vertices of $Q$ have degree $9$, and so $f_1(Q)=\binom{10}{2}=45$. 
		
		Since $Q$ is $3$-simplicial $2$-simple, $4f_3(Q)=3f_2(Q)$ by Lemma \ref{lm: f-vector criterion}. Furthermore, since $Q$ is $3$-simplicial
		and since the toric $h$-vector of a $5$-polytope is symmetric \cite{Stanley-intersection}, 
	$$	0=g^{\mathrm{toric}}_3(Q)=f_2(Q)-4f_1(Q)+10f_0(Q)-20.$$
		$$\text{Finally, by the Euler relation,}\quad f_0(Q)-f_1(Q)+f_2(Q)-f_3(Q)+f_4(Q)=2.$$
		This uniquely determines the $f$-vector of $Q$: $f(Q)=(10, 45, 100, 75, 12)$. But then we must have $75=f_3(Q)\leq \binom{f_4(Q)}{2}=66$, which is a contradiction.
		
		Similarly, if $f_0(Q)=11$, then $f_2(Q)=4f_1(Q)-10f_0(Q)+20=4f_1(Q)-90$, which is not a multiple of $4$. On the other hand, $4f_3(Q)=3f_2(Q)$ still holds, so $f_3(Q)$  is not an integer, which is again a contradiction.
	\end{proof}
	
	While a $2$-simplicial $2$-simple $4$-polytope with $9$ vertices is unique, this is not the case with $3$-simplicial $2$-simple $5$-polytopes with $12$ vertices. (For instance, in Section 6 we will see that there is such a polytope with a simplex facet.) For $d\geq 6$, Question \ref{Question 1}(1) remains unsolved. It would be very interesting to shed any light on whether the answer is $2d+2$ or smaller than $2d+2$.

	\section{The merging operation}
Throughout, let $d\geq 2$. Recall that a connected sum of two simplicial $d$-polytopes is a simplicial $d$-polytope. In other words, taking connected sums preserves the property of being $(d-1)$-simplicial $1$-simple. Is there an analogous operation that preserves the property of being $(d-i)$-simplicial $i$-simple for an arbitrary $2\leq i\leq d-1$? The goal of this section is to discuss one such operation that can be applied to two $d$-polytopes as long as one of them has a simplex facet and another one has a simple vertex. The order in which we list the vertices will be important for our construction. Specifically, we write $[a_1,\ldots,a_m]$ to denote the polytope $\conv(a_1,\ldots,a_m)$ whose vertices are ordered as $a_1,\ldots,a_m$. We will mainly use this notation to describe faces of a given polytope. For brevity, we also write the edge $[u, v]$ as $uv$. 
	
\subsection{The definition and basic properties}
Let $P_1$ and $P_2$ be two $d$-polytopes such that $P_1$ has a {\em simplex facet} $F:=[u_1, \dots, u_d]$ and $P_2$ has a {\em simple vertex} $v$ whose neighbors are ordered as $u'_1, \dots, u'_d$. We adopt the following notation: for $1\leq j\leq d$, let $H_j$ be the facet of $P_1$ that is adjacent to $F$ along the ridge $G_j:=[u_1, \dots, \widehat{u_j}, \dots, u_ d]$.  Similarly, for $1\leq j\leq d$, let $H'_j$ be the facet of $P_2$ that contains all the edges of $P_2$ incident with $v$ but $vu'_j$.
	
    By applying a projective transformation to $P_1$, we may assume that the hyperplanes $\aff(F), \aff(H_1), \dots, \aff(H_d)$ define a $d$-simplex $\Sigma$ that {\em contains} $P_1$. Denote the vertex of $\Sigma$ that does not lie in $F$ by $u$. By applying the unique affine transformation that maps $v$ to $u$, and $u_k'$ to $u_k$ for $1\leq k\leq d$, we may further assume that the $d$-simplices $\Sigma'=[v, u'_1, \dots, u'_d]$ and $\Sigma$ coincide, and in particular that $P_1\subseteq \Sigma=\Sigma'$ is a convex subset of~$P_2$.
    
	Finally, let $P_2':=\conv(V(P_2)\backslash v)$ and $F':=[u_1', \dots, u_d']$ be two subpolytopes of $P_2$. Note that if $P_2$ is a $d$-simplex, then $P'_2$ is $F'$, and otherwise, $F'$ is a facet of $P'_2$.

	\begin{definition}\label{def: merging}
	Under the above assumptions on $P_1$ and $P_2$, define a new $d$-polytope $P_1\merge P_2$ obtained from $P_2$ by replacing $\Sigma'= \Sigma$ with  $P_1$. Alternatively, $P_1\merge P_2$ is the union of $P_1$ and $P'_2$ where we identify $u_k$ with $u'_k$ for $1\leq k\leq d$. (Observe that $P_1$ and $P_2'$ share the facet $F=F'$, lie on the opposite sides of $F$ and that their union is a polytope.) The new polytope is called the \emph{merge} of $P_1$ and $P_2$ along $F$ and $v$.
	\end{definition}
    \begin{example}
    	Consider two polygons $P_1$ and $P_2$ whose boundary complexes are cycles $(u_1,\ldots,u_n, u_1)$ and $(v_0, v_1,\ldots, v_k, v_0)$. Then the merge of $P_1$ and $P_2$ along the edge $F=u_1 u_n$ and the vertex $v_0$ is the polygon whose boundary complex is the cycle $(v_1=u_1, u_2, \dots, u_{n-1}, u_n=v_k, v_{k-1}, \dots, v_2, v_1=u_1)$. In other words, in dimension $2$, $P_1\merge P_2$ is exactly the connected sum of $P_1$ and $P_2'=\conv(V(P_2)\backslash v_0)$. 
    \end{example}
		
		\noindent Figure 1 illustrates how to merge two $3$-polytopes.
    \begin{figure}[ht]   
    	\subfloat{
    		\begin{tikzpicture}
    			\draw (0,0)node[right]{$u_1$}--(2,2)node[right]{$u_2$}--(1,3)node[right]{$u_3$};
    			\draw (0,0)--(-0.5, 1)--(2, 2)--(0,2.5)--(-0.5,1);
    			\draw (0,2.5)--(1,3);
    			\draw [fill=blue!40, opacity=0.3] (0,0)--(1,3)--(2,2)--(0,0);
    			\draw [blue, dashed] (-0.5, 1)--(-1,2)node[left]{$u$}--(2,2);
    			\draw [blue, dashed](-1, 2)--(0,2.5);
    		\end{tikzpicture}
    	}
    	\subfloat{
    		\begin{tikzpicture}
    			\draw[black] (3,0)--(2,2)node[right]{$u'_2$}--(-1,2)--(0,0)node[left]{$u'_1$}--(3,0)--(5,1);
    			\draw[dashed] (0,0)--(2,1)--(5,1);
    			\draw[dashed] (2,1)--(1,3);
    			\draw (2,2)--(5,1)--(1,3)--(2,2);
    			\draw(-1,2)node[left]{$v$}--(1,3)node[left]{$u'_3$};
    			\draw[fill=blue!40, opacity=0.3] (1,3)--(0,0)--(2,2)--(1,3);
    		\end{tikzpicture}
    	}
    	\subfloat{
    		\begin{tikzpicture}
    			\draw (3,0)--(0,0)node[left]{$u_1=u'_1$}--(-0.5, 1)--(2,2)node[right]{$u_2=u'_2$}--(3,0)--(5,1);
    			\draw[dashed] (0,0)--(2,1)--(5,1);
    			\draw[dashed] (2,1)--(1,3)node[left]{$u_3=u_3'$};
    			\draw (-0.5,1)--(0,2.5)--(2,2)--(1,3)--(0, 2.5);
    			\draw (1,3)--(5,1)--(2,2);
    		\end{tikzpicture}
    	}
    	\caption{$P_1\subseteq \Sigma$, $P_2\supseteq \Sigma'$, and $P_1\merge P_2$, where the merge is along $[u_1, u_2, u_3]\cong [u_1', u_2', u_3']$ and $v$.}
    \end{figure}
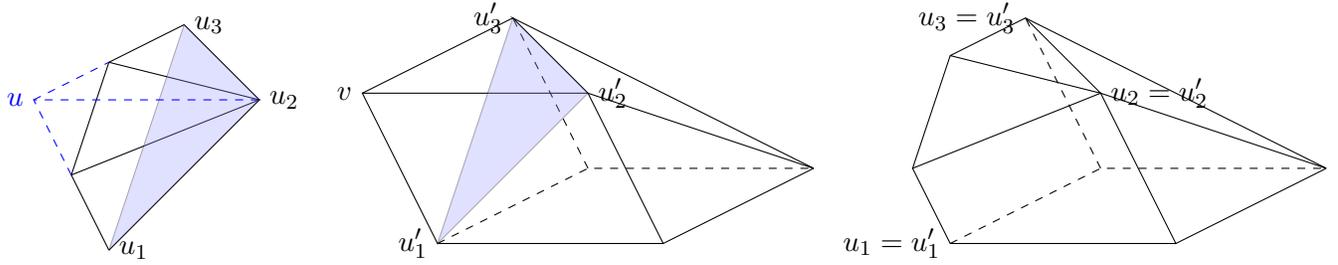\label{Fig: merging}
    
    \begin{remark} \label{rem:facets-of-merge}
   	For $d\geq 3$, the set of facets of $P_1\merge P_2$ consists of 
   		\begin{itemize}
   		\item old facets: all facets of $P_1$ with the exception of  $F, H_1, \dots, H_d$, and all facets of $P_2$ with the exception of $H'_1, \dots, H'_d$;
		\item new facets: for each $1\leq j\leq d$, $H_j$ and $H'_j$ merge into a single facet $H_j\merge H'_j$
			where the merge is along $G_j=[u_1, \dots, \widehat{u_j}, \dots, u_d]$ and $v$ (with the neighbors of $v$ in $H_j'$ ordered as $u_1', \dots, \widehat{u'_j}, \dots u'_d$).
   	\end{itemize}
\end{remark}

\begin{remark} The description of facets of $P_1\merge P_2$ leads to the following observation:
	the combinatorial type of $P_1\merge P_2$ may depend on the ordering of vertices of $F$ and neighbors of $v$. That is, letting $F=[u_{\sigma(1)}, \dots, u_{\sigma(d)}]$ and relabeling the neighbors of $v$ as $v_{\sigma'(1)}, \dots, v_{\sigma'(d)}$, for some permutations $\sigma, \sigma'$ of $[d]:=\{1,2,\ldots, d\}$, may result in a polytope with a different combinatorial type; see Section 6 for examples. This is analogous to the situation with the connected sum of two simplicial polytopes. 
\end{remark}
   It follows from Definition \ref{def: merging} that if $P_1$ is a simplex, then $P_1\merge P_2=P_2$, and similarly if $P_2$ is a simplex, then $P_1\merge P_2=P_1$. In all other cases, $F$ is not a facet of $P_1\merge P_2$ and $v$ is not a vertex of $P_1\merge P_2$. Furthermore, if both $P_1$ and $P_2$ are simplicial and $P_2$ has a simple vertex $v$, then the merge of $P_1$ and $P_2$ along any facet $F$ of $P_1$ and $v$ is the connected sum of $P_1$ and $P_2'=\conv(V(P_2)\backslash v)$.

   We summarize this discussion in the following lemma.
	\begin{lemma}\label{lm: vertices after merging}
	Let $d\geq 2$.	Let $P_1$ be a $d$-polytope with a simplex facet and let $P_2$ be a $d$-polytope with a simple vertex. Then $f_0(P_1\merge P_2)=f_0(P_1)+f_0(P_2)-(d+1)$. In particular, $f_0(P_1\merge P_2)\geq \max \{f_0(P_1), f_0(P_2)\}$ and equality holds if and only if at least one of $P_1$ and  $P_2$ is a simplex. In the case that one of $P_1$ and $P_2$ is a simplex, $P_1\merge P_2$ is equal to the other polytope.
	\end{lemma}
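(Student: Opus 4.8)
The plan is to compute $f_0$ directly from the description $P_1\merge P_2 = P_1\cup P_2'$ in Definition~\ref{def: merging}, where $P_2' = \conv(V(P_2)\setminus v)$ and the gluing identifies $u_k$ with $u_k'$ for $1\le k\le d$. Since $P_1$ and $P_2'$ lie on opposite closed sides of $\aff(F)$ and $\aff(F)$ meets each of them in $F=F'$ (a facet of $P_2'$ unless $P_2$ is a simplex, in which case $P_2'=F'$), we get $P_1\cap P_2'=F$ and hence $V(P_1)\cap V(P_2')=V(F)=\{u_1,\dots,u_d\}$, a set of size $d$ because $F$ is a $(d-1)$-simplex. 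As $P_1\merge P_2$ is a polytope, $P_1\merge P_2=\conv(V(P_1)\cup V(P_2'))$, so $V(P_1\merge P_2)\subseteq V(P_1)\cup V(P_2')$, a set of cardinality $f_0(P_1)+(f_0(P_2)-1)-d=f_0(P_1)+f_0(P_2)-(d+1)$; this gives the upper bound.

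For the matching lower bound I would argue that every element of $V(P_1)\cup V(P_2')$ really is a vertex of $P_1\merge P_2$. A vertex $w$ of $P_1$ not on $F$ lies strictly on the $P_1$-side of $\aff(F)$ (otherwise $w\in\aff(F)\cap P_1=F$), so near $w$ the set $P_1\cup P_2'$ coincides with $P_1$; hence the tangent cone of $P_1\merge P_2$ at $w$ equals that of $P_1$ and is pointed, so $w$ is a vertex. The same local argument (using that $F'$ is a facet of $P_2'$ when $P_2$ is not a simplex) covers every vertex of $P_2'$ off $F'$. The genuinely delicate case is the $d$ identified vertices $u_1,\dots,u_d$ of the glued facet — showing none of them gets absorbed into a positive-dimensional face of the union is exactly where the normalization pays off. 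Here the key input is $P_1\merge P_2\subseteq P_2$: indeed $P_1\subseteq\Sigma=\Sigma'=[v,u_1',\dots,u_d']\subseteq P_2$ and $P_2'\subseteq P_2$, so $P_1\cup P_2'\subseteq P_2$. Since each $u_j=u_j'$ is a vertex of $P_2$ (a neighbor of $v$), a supporting hyperplane of $P_2$ isolating $u_j$ also supports $P_1\merge P_2$ and isolates $u_j$ there; thus $u_j$ is a vertex of $P_1\merge P_2$, and $f_0(P_1\merge P_2)=f_0(P_1)+f_0(P_2)-(d+1)$.

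The remaining assertions are formal. Every $d$-polytope has at least $d+1$ vertices, so $f_0(P_1\merge P_2)\ge f_0(P_1)$ and $f_0(P_1\merge P_2)\ge f_0(P_2)$, with equality against $f_0(P_1)$ (resp.\ $f_0(P_2)$) exactly when $f_0(P_2)=d+1$ (resp.\ $f_0(P_1)=d+1$), i.e.\ when $P_2$ (resp.\ $P_1$) is a $d$-simplex. Finally, that $P_1\merge P_2$ then equals the other polytope was already noted after Definition~\ref{def: merging}: if $P_2$ is a simplex then $P_2'=F'=F$ and $P_1\cup F=P_1$, and if $P_1$ is a simplex then $P_1=\Sigma=\Sigma'$, so replacing $\Sigma'$ by $P_1$ returns $P_2$. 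I expect the one real obstacle to be the treatment of the $u_j$'s in the second paragraph; everything there hinges on the inclusion $P_1\merge P_2\subseteq P_2$ coming from $P_1\subseteq\Sigma=\Sigma'$.
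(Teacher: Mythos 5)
Your proof is correct and follows essentially the same route as the paper, which states this lemma as a summary of the discussion following Definition~\ref{def: merging} (counting $V(P_1)\cup V(P_2')$ with the $d$ identifications $u_k=u_k'$) without writing out a formal proof. Your extra care in verifying that every candidate point survives as a vertex — in particular using $P_1\merge P_2\subseteq P_2$ to handle $u_1,\dots,u_d$ — is a valid filling-in of details the paper leaves implicit.
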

    
    The following theorem and corollary explain the significance of the merging operation.
	\begin{theorem}\label{main theorem}
	Let $d\geq 2$ and $1\leq i,j\leq d-1$, and let $P_1$ and $P_2$ be $d$-polytopes with a simplex facet and a simple vertex, respectively. If $P_1$ and $P_2$ are $j$-simplicial, then so is $P_1\merge P_2$. If $P_1$ and $P_2$ are $i$-simple, then so is  $P_1\merge P_2$.
	\end{theorem}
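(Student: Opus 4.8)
The plan is to prove both statements together by induction on $d$. The base case $d=2$ is immediate: the only relevant value is $i=j=1$, every polygon is $1$-simplicial and $1$-simple, and $P_1\merge P_2$ is again a polygon. So assume $d\geq 3$ and that the theorem holds in all lower dimensions.

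For the simpliciality statement I would use that a $d$-polytope is $j$-simplicial exactly when all of its facets are $j$-simplicial (for $j\leq d-2$), and $(d-1)$-simplicial exactly when all of its facets are simplices. Given a $j$-face $\tau$ of $P_1\merge P_2$, it lies in some facet, which by Remark~\ref{rem:facets-of-merge} is either (a) a facet of $P_1$ other than $F,H_1,\dots,H_d$, (b) a facet of $P_2$ other than $H'_1,\dots,H'_d$, or (c) a merged facet $H_k\merge H'_k$. In cases (a) and (b) this facet is literally a facet of $P_1$ or of $P_2$, so its $j$-faces are $j$-faces of $P_1$ or $P_2$ and hence simplices. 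In case (c), $H_k$ is a facet of $P_1$, hence has the simplex facet $G_k$; and $H'_k$ is a facet of $P_2$, hence has $v$ as a simple vertex (since $v$ lies in $d$ facets of $P_2$, it lies in $d-1$ facets of the $(d-1)$-polytope $H'_k$). Thus $H_k\merge H'_k$ is a merge of two $(d-1)$-polytopes that are $j$-simplicial---their $j$-faces being $j$-faces of $P_1$, $P_2$---or, when $j=d-1$, are simplices, so that the merge is again a simplex. The inductive hypothesis then shows $H_k\merge H'_k$ is $j$-simplicial, so $\tau$ is a simplex.

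For the simplicity statement I would dualize: a $d$-polytope $P$ is $i$-simple exactly when every vertex figure $P/w$ is an $i$-simple $(d-1)$-polytope (every $(d-i-1)$-face $\tau$ of $P$ contains some vertex $w$, and the facets of $P$ through $\tau$ are exactly the facets of $P/w$ through $\tau/w$). By Lemma~\ref{lm: vertices after merging}, the vertices of $P_1\merge P_2$ are those of $P_1$ off $F$, those of $P_2$ other than $v$ and off $F'$, and the $d$ identified vertices $u_k=u'_k$. For a vertex $w$ of either of the first two kinds, $P_1\merge P_2$ agrees with $P_1$ (resp.\ $P_2$) in a neighborhood of $w$---the modification occurs across $\aff(F)$, away from $w$---so $(P_1\merge P_2)/w$ equals $P_1/w$ (resp.\ $P_2/w$), which is $i$-simple. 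For an identified vertex $u_k$ I would show that $(P_1\merge P_2)/u_k\cong(P_1/u_k)\merge(P_2/u'_k)$: here $F/u_k$ is a simplex facet of $P_1/u_k$, and the vertex of $P_2/u'_k$ coming from the edge $vu'_k$ is a simple vertex of $P_2/u'_k$ (it lies in the $d-1$ facets of $P_2/u'_k$ coming from the $H'_j$, $j\neq k$), so this merge is defined; matching Remark~\ref{rem:facets-of-merge} on the two sides identifies them. Since $P_1/u_k$ and $P_2/u'_k$ are $i$-simple, the inductive hypothesis---or, for $i=d-1$, the fact that a merge with a simplex returns the other factor---shows $(P_1\merge P_2)/u_k$ is $i$-simple. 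Hence every vertex figure of $P_1\merge P_2$ is $i$-simple. (Alternatively, the simplicity statement would follow from the simpliciality statement by duality, once one checks $(P_1\merge P_2)^*\cong P_2^*\merge P_1^*$, a simplex facet of $P_1$ dualizing to a simple vertex of $P_1^*$ and conversely.)

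The step I expect to be the crux, either way, is the identification $(P_1\merge P_2)/u_k\cong(P_1/u_k)\merge(P_2/u'_k)$ at the glued vertices (equivalently, the self-duality $(P_1\merge P_2)^*\cong P_2^*\merge P_1^*$): Remark~\ref{rem:facets-of-merge} only lists the facets of $P_1\merge P_2$, so some bookkeeping is required to see that these, together with their lower incidences, assemble into precisely the claimed merge---essentially a local instance of the face-lattice description of $P_1\merge P_2$. The remaining work is routine checking with the simplex faces $G_k$, $F/u_k$ and the simple vertices $v$, $[v]$ in the relevant facets and vertex figures.
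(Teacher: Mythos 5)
Your argument for the $j$-simplicial half is essentially the paper's own proof: induct on $d$, observe that every $j$-face of $P_1\merge P_2$ lies either in an untouched facet of $P_1$ or $P_2$ or in a merged facet $H_k\merge H'_k$, note that $H_k$ and $H'_k$ are $j$-simplicial $(d-1)$-polytopes with a simplex facet $G_k$ and a simple vertex $v$ respectively, and apply the inductive hypothesis to the merged facet. Nothing to add there.

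For the $i$-simple half you take a genuinely different route, and this is where I see a gap. The paper does \emph{not} pass to vertex figures; it takes a $(d-i-1)$-face $\tau$ and counts the facets containing it directly. The key observation is a dimension shift: if $\tau\subseteq H_k\merge H'_k$, then $H_k$ and $H'_k$ are $(i-1)$-simple $(d-1)$-polytopes, so by induction $H_k\merge H'_k$ is too; since $\dim\tau=(d-1)-(i-1)-1$, the face $\tau$ lies in exactly $i$ facets of $H_k\merge H'_k$, i.e.\ $i$ ridges of $P_1\merge P_2$, each shared with exactly one further facet, giving $i+1$ facets in all. (The case where $\tau$ avoids all merged facets is immediate.) Your route instead rests entirely on the identification $(P_1\merge P_2)/u_k\cong(P_1/u_k)\merge(P_2/u'_k)$ at the glued vertices, which you name as the crux but do not prove. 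This is not a routine check: Remark~\ref{rem:facets-of-merge} gives only the facet list of $P_1\merge P_2$, whereas a vertex figure is the full interval $[u_k,\hat 1]$, so you would need to control \emph{all} faces through $u_k$ and their incidences — in effect a local version of Theorem~\ref{prop: face lattice of the merge}. That theorem is proved later in the paper, is logically downstream of the facet analysis, and is stated only for $(d-i)$-simplicial $i$-simple polytopes, while your Theorem hypothesis is merely that $P_1,P_2$ are $i$-simple; the same caveat applies to your proposed alternative via $(P_1\merge P_2)^*\cong P_2^*\merge P_1^*$. I believe the identification is true, but as written your proof of the simplicity statement defers its only substantive step, and the paper's direct counting argument shows that step can be avoided altogether.
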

	\begin{proof}
		We first discuss $j$-simplicial polytopes. The proof is by induction on $d$. The statement holds for $j=1$ for any $d$ (since all polytopes are $1$-simplicial). Hence the statement holds for $d=2$.
		
		Now, assume the statement holds for $d-1$ and any $1\leq j\leq d-2$. We prove that the statement holds for $d$ and any $1\leq j\leq d-1$. Let $P_1$ and $P_2$ be two $j$-simplicial d-polytopes.  If one of them is a simplex, there is nothing to prove. Also, if $j=d-1$, then $P_1\merge P_2$ is the connected sum of two simplicial polytopes $P_1$ and $P'_2$, which is $(d-1)$-simplicial.
		
		Thus assume that $2\leq j\leq d-2$ and that neither $P_1$ nor $P_2$ is a simplex. Let $\tau$ be a $j$-face of $P_1\merge P_2$. Then either $\tau$ is a $j$-face of $P_1$ or it is a $j$-face of $P_2$ or it is  a $j$-face of $H_k\merge H'_k$ for some $k$. In the first two cases, $\tau$ is a simplex because $P_1$ and $P_2$ are $j$-simplicial. In the last case, it is a simplex because both $H_k$ and $H'_k$ are $j$-simplicial, and so $\tau$ is a simplex by the induction hypothesis.
		
		We now discuss $i$-simple polytopes. The proof is again by induction on $d$. The statement holds for $i=1$ and any $d$ (since all polytopes are $1$-simple). Hence the statement holds for $d=2$. Now assume the statement holds for $d-1$ and any $2\leq i\leq d-2$. Let $2\leq i\leq d-1$ and let $P_1$ and $P_2$ be two $i$-simple $d$-polytopes. To see that $P_1\merge P_2$ is $i$-simple, let $\tau$ be a $(d-i-1)$-face of $P_1\merge P_2$. There are two possible cases. 
		
		Case 1: $\tau$ is a face of one of $H_k\merge H'_k$. Since $P_1$ and $P_2$ are $i$-simple, $H_k$ and $H'_k$ are $(i-1)$-simple $(d-1)$-polytopes. Thus, by the induction hypothesis, $H_k\merge H'_k$ is an $(i-1)$-simple $(d-1)$-polytope. Since $\tau$ is a face of $H_k\merge H'_k$ of dimension $d-i-1=(d-1)-(i-1)-1$, it follows that there are exactly $i$ facets of $H_k\merge H'_k$ (and hence ridges of $P_1\merge P_2$) that contain $\tau$. Each of these $i$ ridges is contained in two facets of $P_1\merge P_2$: $H_k\merge H'_k$ and one additional facet. Thus, $\tau$ is contained in exactly $i+1$ facets of $P_1\merge P_2$, namely, $H_k\merge H'_k$ and the $i$ additional facets just described. 
		
		Case 2: $\tau$ is not contained in any $H_k\merge H'_k$ (for $k=1,\ldots,d$). Then either $\tau$ is a face of $P_1$ not contained in any of $F,H_1,\ldots,H_d$, or $\tau$ is a face of $P_2$ that does not contain $v$ and is not contained in any of $H'_1,\ldots,H'_d$. In the former case, the facets of $P_1\merge P_2$ that contain $\tau$ are the facets of $P_1$ that contain $\tau$ and there are $i+1$ of them since $P_1$ is $i$-simple. Similarly, in the latter case, the facets of $P_1\merge P_2$ that contain $\tau$ are the facets of $P_2$ that contain $\tau$ and there are $i+1$ of them. 
	\end{proof}
	\begin{corollary}\label{cor: generate infinite family}
		Let $d\geq 2$ and $1\leq i\leq d-1$. Let $P$ be a $(d-i)$-simplicial $i$-simple $d$-polytope such that (1) $P$ is not a simplex, (2) $P$ has a simplex facet $F$, and (3) $P$ has a simple vertex $v$ not contained in F. Finally, let $P\merge P$ be the merge of $P$ with itself along $F$ and $v$. Then $P\merge P$ is a $(d-i)$-simplicial $i$-simple $d$-polytope that has a simplex facet and a simple vertex  not contained in that facet; furthermore, $f_0(P\merge P)>f_0(P)$.
		Consequently, there exists an infinite family of $(d-i)$-simplicial $i$-simple $d$-polytopes obtained by iterative merging with $P$.
	\end{corollary}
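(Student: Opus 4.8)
The plan is to obtain the three asserted properties of $P\merge P$ from results already in hand, and then iterate. First, taking $P_1=P_2=P$ in Theorem \ref{main theorem} with $j=d-i$ shows at once that $P\merge P$ is $(d-i)$-simplicial and $i$-simple. Second, since $P$ is not a simplex it has at least $d+2$ vertices, so Lemma \ref{lm: vertices after merging} gives $f_0(P\merge P)=2f_0(P)-(d+1)\geq f_0(P)+1>f_0(P)$.

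The real content is to verify that $P\merge P$ again carries a simplex facet together with a simple vertex avoiding it, so the construction may be repeated. Label the two copies of $P$ as $P_1$ and $P_2$, with $P_1$ supplying the simplex facet $F=[u_1,\dots,u_d]$ and $P_2$ supplying the simple vertex $v$ with ordered neighbors $u_1',\dots,u_d'$; recall that $P\merge P$ glues $P_1$ to $P_2'=\conv(V(P_2)\setminus v)$ along $F$, identifying $u_k$ with $u_k'$. For the facet: by condition (3) the facet $F$ of $P_2$ does not contain $v$, hence it is none of $H_1',\dots,H_d'$, so by the description in Remark \ref{rem:facets-of-merge} this copy of $F$ survives unchanged as a facet of $P\merge P$, and it is a simplex since $F$ is. For the vertex: let $v^{(1)}$ denote the vertex of $P_1$ playing the role of $v$. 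As $v\notin F$, $v^{(1)}$ is not among $u_1,\dots,u_d$, hence is identified with nothing and remains a vertex of $P\merge P$. Reading off Remark \ref{rem:facets-of-merge}, the facets of $P\merge P$ incident to $v^{(1)}$ are exactly the images of the facets of $P_1$ incident to $v^{(1)}$: each $H_k$ through $v^{(1)}$ is replaced by $H_k\merge H_k'$, the remaining facets of $P_1$ through $v^{(1)}$ survive, and no facet inherited from $P_2$ contains $v^{(1)}$ since the only vertices shared by the two copies are $u_1=u_1',\dots,u_d=u_d'$. Because $v$ is simple in $P$ there are exactly $d$ such facets, so $v^{(1)}$ is a simple vertex of $P\merge P$; and $v^{(1)}$ does not lie on the surviving simplex facet $F\subset P_2$, because every vertex of that facet coming from $P_1$ is one of $u_1,\dots,u_d$.

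Finally, I would set $P^{(1)}:=P$ and $P^{(n+1)}:=P^{(n)}\merge P$, where each merge is taken along the simplex facet guaranteed at the previous step and along the fixed simple vertex $v$ of $P$. Running the argument above with $P_1=P^{(n)}$ and $P_2=P$, one checks by induction that every $P^{(n)}$ is a $(d-i)$-simplicial $i$-simple $d$-polytope with a simplex facet and a simple vertex off it, so all the merges are legitimate; moreover $f_0(P^{(n+1)})=f_0(P^{(n)})+f_0(P)-(d+1)>f_0(P^{(n)})$, so the $P^{(n)}$ have strictly increasing, hence pairwise distinct, vertex numbers. This produces the promised infinite family.

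I expect the main obstacle to be precisely the face bookkeeping in the middle paragraph: one must keep straight that the surviving copy of $F$ is the one inside $P_2$ (the copy contributing the simple vertex, not the one contributing the facet that gets used up), that $v^{(1)}$ is genuinely untouched by the gluing, and that replacing each $H_k$ by $H_k\merge H_k'$ neither destroys nor creates a facet through $v^{(1)}$. Everything else follows mechanically from Theorem \ref{main theorem}, Lemma \ref{lm: vertices after merging}, and Remark \ref{rem:facets-of-merge}.
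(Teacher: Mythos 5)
Your proposal is correct and takes essentially the same route as the paper's proof: merge two copies of $P$ along the facet $F$ of the first copy and the vertex $v$ of the second, observe that the copy of $F$ inside the second factor survives as a simplex facet while the copy of $v$ inside the first factor survives as a simple vertex not on it, and iterate using the strictly increasing vertex counts. You merely spell out the facet bookkeeping (via Remark \ref{rem:facets-of-merge}) that the paper leaves implicit.
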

	\begin{proof}
		Consider two copies of $P$: $P_1$ and $P_2$. Denote the copy of $F$ in $P_j$ by $F_j$, and the copy of $v$ in $P_j$ by $v_j$. Merge $P_1$ and $P_2$ along $F_1$ and $v_2$. By Theorem \ref{main theorem}, $P_1\merge P_2$ is $(d-i)$-simplicial and $i$-simple; it has a simplex facet $F_2$ and a simple vertex $v_1\notin F_2$. 
	\end{proof}

This corollary implies that to find infinitely many $(d-i)$-simplicial $i$-simple $d$-polytopes, it suffices to find the ``building blocks" --- those with simplex facets and simple vertices. Hence we propose the following question that strengthens Question \ref{Question 1}(2).
\begin{question} \label{quest:simplex-facet}
Let $d\geq 4$ and $2\leq i\leq d-2$. Are there infinite families of $(d-i)$-simplicial $i$-simple $d$-polytopes, each of which has a {\bf simplex} facet and a {\bf simple} vertex? 
\end{question}

\subsection{The face lattice}
In this subsection, we assume that $P_1$ and $P_2$ are two $(d-i)$-simplicial $i$-simple $d$-polytopes that will be merged along a simplex facet $F=[u_1, \dots, u_d]$ of $P_1$ and a simple vertex $v$ of $P_2$. Our goal is to describe the face lattice of $P_1\merge P_2$, $\lattice(P_1\merge P_2)$. We continue using notation introduced in Section 4.1.

\begin{definition} Consider the following two subposets of $\lattice(P_1)$ and $\lattice(P_2)$:
	$$\lattice(P_1)^-:=\lattice(P_1)\backslash \{\sigma: \sigma \subseteq F, \dim \sigma\geq d-i\},$$
	$$\lattice(P_2)^-:=\lattice(P_2)\backslash \{\sigma: v\in\sigma, \dim \sigma<d-i\},$$
	and let $\lattice(P_1)^-\sqcup \lattice(P_2)^-$ be their {\em disjoint sum}, i.e., the disjoint union of $\lattice(P_1)^-$ and $\lattice(P_2)^-$ with the original partial orders on $\lattice(P_1)^-$ and $\lattice(P_2)^-$, and no other comparable pairs.
\end{definition}
\begin{definition} \label{def:lattice}
	Let $\lattice$ be the following quotient poset of $\lattice(P_1)^-\sqcup \lattice(P_2)^-$ (whose definition depends on $d$ and $i$). As a set, it is $\left(\lattice(P_1)^-\sqcup \lattice(P_2)^-\right)/\sim$, where $$[u_k: k\in S]\sim [u'_k: k\in S] \;\mbox{ for all } S\subseteq [d],\; |S|\leq d-i,$$
	$$\text{and}\; \cap_{k\in S} H_k\sim \cap_{k\in S} H'_k \; \mbox{ for all } S\subseteq [d],\; |S|\leq i.$$
	The partial order on $\lattice$ is inherited from $\lattice(P_1)^-\sqcup \lattice(P_2)^-$: $[\tau]<[\sigma]$ if there are representatives $\tau'$ and $\sigma'$ of the equivalence classes $[\tau]$ and $[\sigma]$  such that $\tau'<\sigma'$ in $\lattice(P_1)^-\sqcup \lattice(P_2)^-$.
\end{definition}
The main result of this subsection ---Theorem \ref{prop: face lattice of the merge}--- asserts that $\lattice$ is the face lattice of $P_1\merge P_2$. The proof relies on the following lemma. 
\begin{lemma}\label{lm: faces of polytopes}
	Let $S\subseteq [d]$. 
	\begin{enumerate}
		\item If $|S|\leq i$, then $\cap_{k\in S}H_k$ is a $(d-|S|)$-face of $P_1$ not contained in $F$, while $\cap_{k\in S} H'_k$ is a $(d-|S|)$-face of $P_2$ containing $v$.
		\item If $|S|\leq d-i$, then $[u_k: k\in S]$ is an $(|S|-1)$-face of $P_1$ and $[u'_k: k\in S]$ is an $(|S|-1)$-face of $P_2$.
		\item If $H$ is a facet of $P_1$ that is not one of $F, H_1,\dots,H_d$, then $H$ shares with $F$ at most $d-i-1$ vertices, and $H$ does not contain any intersection of the form $\cap_{k\in S} H_k$, for $S\subseteq [d]$, $|S|\leq i$.	Hence, $\lattice(H)$ is equal to $[\hat{0}, H]$ computed in both $\lattice(P_1)^-$ and  $\lattice$.
		\item If $H$ is a facet of $P_2$ that does not contain $v$, then $H$ does not contain any intersection of the form $\cap_{k\in S} H'_k$. Thus $\lattice(H)$ is equal to $[\hat{0},H]$ computed in both $\lattice(P_2)^-$ and $\lattice$.
	\end{enumerate}
\end{lemma}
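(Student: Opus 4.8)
The plan is to verify the four claims in turn, all of which are elementary consequences of the hypotheses that $P_1$ is $(d-i)$-simplicial and $P_2$ is $i$-simple (equivalently, by the remarks in Section 2, of the fact that faces of small enough dimension form Boolean intervals). For part (1), the key input is the definition of $H_j$ and $H_j'$: I would argue that for $S\subseteq[d]$ with $|S|\le i$, the face $\bigcap_{k\in S}H_k$ of $P_1$ is exactly the face $[u_\ell : \ell\notin S]$ of the simplex facet $F$ translated off $F$ — more precisely, since $P_1\subseteq\Sigma$ with $\Sigma$ the simplex cut out by $\aff(F),\aff(H_1),\dots,\aff(H_d)$, the intersection $\bigcap_{k\in S}H_k$ is a face of $\Sigma$ of dimension $d-|S|$ lying ``opposite'' to $F$ along the directions indexed by $S$, and being a face of $P_1$ it is the claimed face. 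The crucial point is that $\bigcap_{k\in S}H_k$ is \emph{not} contained in $F$ because it contains the apex $u$ (it is a face of $\Sigma$ containing $u$), hence it is distinct from every face of $F$; dually, $\bigcap_{k\in S}H_k'$ is the vertex figure–type face of $P_2$ at $v$ spanned by leaving out the edges $vu_k'$ for $k\in S$, which contains $v$ by construction of the $H_j'$. The dimension count $d-|S|$ follows because $v$ is a simple vertex, so the $H_j'$ intersect transversally in the face lattice exactly as the facets of a simplex do.

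For part (2), this is immediate: $[u_k:k\in S]$ with $|S|\le d-i$ is a face of the simplex $F$ of dimension $|S|-1$, and since $F$ is a face of $P_1$ it is a face of $P_1$; the statement for $[u_k':k\in S]$ is the claim that the vertices $u_1',\dots,u_d'$ (neighbors of the simple vertex $v$) span an $(|S|-1)$-simplex face in $P_2$ — here I would invoke that $P_2$ is $(d-i)$-simplicial, so every face of dimension at most $d-i$ is a simplex, together with the observation that $u_k'$ for $k\in S$ do span a face: indeed $\{v\}\cup\{u_k':k\in S\}$ spans a face of $P_2$ (a face of the vertex figure at $v$, pulled back), and removing $v$ from a simplex face leaves a simplex face. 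The subtlety to address is why $\{v\}\cup\{u_k':k\in S\}$ spans a face when $|S|\le d-i$: this is because $[v,u_1',\dots,u_d']$-type faces through $v$ of dimension $\le d-i$ must be simplices (as $P_2$ is $(d-i)$-simplicial), and in a simple vertex figure every subset of the $d$ neighbors of $v$ of size $\le d-i$ spans a face of $P_2/v$.

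For part (3): suppose $H$ is a facet of $P_1$ other than $F,H_1,\dots,H_d$. If $H$ shared $\ge d-i$ vertices with $F$, say it contained $[u_k:k\in S]$ with $|S|=d-i$, then since $P_1$ is $(d-i)$-simplicial this $(d-i-1)$-simplex $[u_k:k\in S]$ would be a face, and $i$-simplicity of $P_1$ (which is implicit: $P_1$ is $(d-i)$-simplicial \emph{and} $i$-simple) forces it to lie in exactly $i+1$ facets; but $F$ and $H_j$ for $j\notin S$ already account for $1+i$ such facets, so $H$ would have to be one of them — contradiction. Next, if $H$ contained some $\bigcap_{k\in S}H_k$ with $|S|\le i$, then by part (1) this is a $(d-|S|)$-face of $P_1$ not in $F$; since such a face lies in exactly $|S|$ facets by $i$-simplicity (it has the shape of a face of a simplex, lying in $H_k$ for $k\in S$), and those $|S|$ facets are precisely $\{H_k:k\in S\}$, $H$ would again have to be one of the $H_j$ — contradiction. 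The conclusion $\lattice(H)=[\hat0,H]$ in both $\lattice(P_1)^-$ and $\lattice$ then follows because the only faces removed in passing to $\lattice(P_1)^-$ are faces of $F$ of dimension $\ge d-i$, none of which lie under $H$ (by the first bullet $H$ meets $F$ in $<d-i$ vertices, so every face of $H$ contained in $F$ has $<d-i$ vertices, hence dimension $<d-i-1<d-i$); and the identifications defining $\lattice$ only touch faces of the form $[u_k:k\in S]$ and $\bigcap_{k\in S}H_k$, none of which lie under $H$ by what we just proved. Part (4) is the mirror image: a facet $H$ of $P_2$ not containing $v$ cannot contain any $\bigcap_{k\in S}H_k'$ since the latter contains $v$ by part (1); and the faces deleted in $\lattice(P_2)^-$ are exactly those containing $v$, so none lie under $H$, and the identifications again only involve faces through $v$ or faces of the form $[u_k':k\in S]$, none of which lie under $H$. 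The main obstacle I anticipate is being careful and precise in part (1) — pinning down that $\bigcap_{k\in S}H_k$ really is the asserted $(d-|S|)$-face containing the apex $u$ and not contained in $F$ — since everything in parts (2)–(4) reduces cleanly to part (1) plus the Boolean-interval characterizations of $(d-i)$-simpliciality and $i$-simplicity; the rest is bookkeeping about which faces get deleted or glued.
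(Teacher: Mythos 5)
Your arguments for parts (2)--(4), and for the $P_2$ half of part (1), match the paper's proof. But the $P_1$ half of part (1) --- which you correctly single out as the crux --- is argued incorrectly. You treat $\bigcap_{k\in S}H_k$ as a face of the circumscribing simplex $\Sigma$ and conclude that it has dimension $d-|S|$ and is not contained in $F$ ``because it contains the apex $u$.'' This conflates the facets $H_k$ of $P_1$ with the facets of $\Sigma$: we only have $H_k=P_1\cap\aff(H_k)$, which is a proper subset of the corresponding facet of $\Sigma$, and the apex $u=\bigcap_{k\in[d]}\aff(H_k)$ does not lie in $P_1$ at all unless $P_1=\Sigma$ is a simplex (if $u\in P_1$ then $u$ is a vertex of $P_1$ and $P_1=\conv(u,u_1,\dots,u_d)=\Sigma$). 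So $u\notin\bigcap_{k\in S}H_k$ in the cases of interest, and your justification for both the dimension count and the non-containment in $F$ collapses. Note that for a general polytope the intersection of $|S|$ facets can have dimension much smaller than $d-|S|$; some hypothesis on $P_1$ must enter, and your argument never uses one.

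The paper's fix is combinatorial rather than geometric: set $\tau:=\bigl(\bigcap_{k\in S}H_k\bigr)\cap F=\bigcap_{k\in S}(H_k\cap F)=[u_j: j\notin S]$, which is a $(d-|S|-1)$-face of $P_1$ because $F$ is a simplex. Since $|S|\leq i$ gives $\dim\tau\geq d-i-1$, the $i$-simplicity of $P_1$ makes the interval $[\tau,\hat 1]$ a Boolean lattice of rank $|S|+1$ whose coatoms are $F$ and the $H_k$ for $k\in S$; the meet of the coatoms $H_k$, $k\in S$, in this Boolean lattice is then a face of dimension $\dim\tau+1=d-|S|$ that is not below the remaining coatom $F$. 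With part (1) established this way, the rest of your write-up goes through essentially as in the paper.
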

\begin{proof}
	For part (1), we only need to show that $\cap_{k\in S} H_k$ is $(d-|S|)$-dimensional and that it is not contained in $F$. Consider $\tau:=(\cap_{k\in S} H_k)\cap F= \cap_{k\in S}(H_k\cap F)$. Since $F$ is a $(d-1)$-simplex, $\tau$ is a face of $P_1$ of dimension $d-|S|-1$. Now, since $|S|\leq i$, and so $d-|S|-1\geq d-i-1$, the assumption that $P_1$ is $i$-simple implies that the interval $[\tau, \hat{1}]$ is a Boolean lattice whose coatoms are $H_k$, for $k\in S$, and $F$. This, in turn, implies the desired properties of $\cap_{k\in S} H_k$.
	
	For part (2), since $F$ is a simplex facet of $P_1$, $[u_k: k\in S]$ must be a simplex $(|S|-1)$-face of $P_1$. Also, since $v$ is simple, the edges $vu'_k$ for $k\in S$ determine an $|S|$-face of $P_2$,  and this face must be a simplex since $P_2$ is $(d-i)$-simplicial. Thus $[u'_k: k\in S]$ is an $(|S|-1)$-face of $P_2$.
	
	For part (3), note that if $H$ contained $d-i$ vertices of $F$, say, $u_1,\dots,u_{d-i}$, then $[u_1,...,u_{d-i}]$ would be a $(d-i-1)$-face of $P_1$ contained in at least $i+2$ facets, namely, $F$, $H_{d-i+1},\dots,H_d$, and $H$; this is impossible since $P$ is $i$-simple. Similarly, if $H$ contained, say, the face $H_1\cap\dots\cap H_{i}$, then this $(d-i)$-face would be in at least $i+1$ facets, namely, $H_{1},\dots,H_i$, and $H$, which is again a contradiction.
	
	Part (4) follows from the fact that $v\in \cap_{k\in S} H'_k$ but $v\notin H$, and from the definition of $\lattice(P_2)^-$ and $\lattice$.
\end{proof}

 Let $S$ be a subset of $[d]$. Note that $\hat{0}_{P_1}=\vee_{k\in\emptyset}u_k\sim \vee_{k\in\emptyset} u'_k=\hat{0}_{P_2}$ is the minimum element of $\lattice$, while $\hat{1}_{P_1}=\wedge_{k\in\emptyset} H_k\sim \wedge_{k\in\emptyset}H'_k=\hat{1}_{P_2}$ is the maximum element. Furthermore, Lemma \ref{lm: faces of polytopes} implies that if $|S|\leq d-i$, then $\vee_{k\in S} u_k \in\lattice(P_1)$ and $\vee_{k\in S} u'_k\in \lattice(P_2)$ are both elements of $\lattice(P_1)^-\sqcup \lattice(P_2)^-$, and that they have the same rank. Similarly, if $|S|\leq i$, then $\wedge_{k\in S} H_k$ and $\wedge_{k\in S} H'_k$ both belong to $\lattice(P_1)^-\sqcup \lattice(P_2)^-$ and have the same rank there. We are now ready to prove that $\lattice$ is the face lattice of $P_1\merge P_2$. Specifically, for $S\subseteq [d]$, $|S|\leq i$, the class $\wedge_{k\in S}H_k\sim \wedge_{k\in S} H'_k$ in $\lattice$ represents the face $$\cap_{k\in S} (H_k\merge H'_k)=(\cap_{k\in S}H_k)\merge (\cap_{k\in S} H'_k) \mbox{ of $P_1\merge P_2$}.$$ 

\begin{theorem}\label{prop: face lattice of the merge}
	Let $d\geq 2$ and $1\leq i\leq d-1$. Let $P_1$ and $P_2$ be $(d-i)$-simplicial $i$-simple polytopes such that $P_1$ has a simplex facet $F=[u_1,\ldots,u_d]$ and $P_2$ has a simple vertex $v$ whose neighbors are $u'_1,\ldots,u'_d$. Then $\lattice=\lattice(P_1\merge P_2)$.
\end{theorem}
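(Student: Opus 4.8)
The plan is to construct an explicit order-preserving bijection $\Phi\colon \lattice \to \lattice(P_1\merge P_2)$ and verify it is well-defined (respects the equivalence relation $\sim$), a bijection, and order-preserving in both directions. The construction is forced by the geometry: since $P_1\merge P_2$ is, set-theoretically, $P_1\cup P_2'$ with $u_k$ identified with $u_k'$, every face of $P_1\merge P_2$ arises either from a face of $P_1$, a face of $P_2$ (not using $v$ in an essential way), or a ``new'' facet $H_k\merge H'_k$ and its subfaces. So I would define $\Phi$ on representatives: a face $\sigma\in\lattice(P_1)^-$ maps to the corresponding face of $P_1\merge P_2$ (using Remark~\ref{rem:facets-of-merge} to identify which faces of $P_1$ survive and which get merged), and similarly for $\lattice(P_2)^-$; the merged facets $H_k\merge H'_k$ and their intersections $\cap_{k\in S}(H_k\merge H'_k) = (\cap_{k\in S}H_k)\merge(\cap_{k\in S}H'_k)$ are the images of the classes $\wedge_{k\in S}H_k\sim\wedge_{k\in S}H'_k$, as the remark preceding the theorem already indicates.

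First I would pin down exactly which faces of $P_1$ lie in $\lattice(P_1)^-$ and in the image: by definition $\lattice(P_1)^-$ discards the faces $\sigma\subseteq F$ of dimension $\geq d-i$, which are precisely the faces of $F$ that ``disappear'' when $F$ ceases to be a facet (the low-dimensional faces of $F$, of dimension $\leq d-i-1$, persist as genuine faces of $P_1\merge P_2$ by Lemma~\ref{lm: faces of polytopes}(2), and these are exactly the ones glued to faces of $P_2$ via the first relation in Definition~\ref{def:lattice}). Symmetrically, $\lattice(P_2)^-$ discards faces containing $v$ of dimension $<d-i$ — these are the low-dimensional faces at the simple vertex $v$ that get absorbed into $P_1$ — while faces containing $v$ of dimension $\geq d-i$ survive (as the faces $\cap_{k\in S}H'_k$ with $|S|\leq i$, which are exactly the coatoms-and-above of $v$) and get glued via the second relation. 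Then the two relations in Definition~\ref{def:lattice} are exactly the gluings needed: $[u_k:k\in S]\sim[u'_k:k\in S]$ for $|S|\leq d-i$ identifies the shared faces of $F=F'$, and $\cap_{k\in S}H_k\sim\cap_{k\in S}H'_k$ for $|S|\leq i$ identifies the pieces that assemble into the new merged facets $H_k\merge H'_k$.

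The verification then breaks into: (a) $\Phi$ is well-defined, i.e., the two representatives of each $\sim$-class have the same image — for the first relation both sides name the same simplex sitting in $F=F'\subseteq P_1\merge P_2$; for the second relation, the image is the face $(\cap_{k\in S}H_k)\merge(\cap_{k\in S}H'_k)$, which is well-defined by Remark~\ref{rem:facets-of-merge} applied iteratively (intersections of merged facets). (b) $\Phi$ is surjective: every facet of $P_1\merge P_2$ is on the list in Remark~\ref{rem:facets-of-merge} (old facets of $P_1$ other than $F,H_1,\dots,H_d$; old facets of $P_2$ other than $H'_1,\dots,H'_d$; new facets $H_j\merge H'_j$), and every proper face is contained in a facet, so an induction on dimension using parts (3) and (4) of Lemma~\ref{lm: faces of polytopes} (which say that an ``old'' facet's face lattice is computed the same way in $\lattice(P_1)^-$, resp.\ $\lattice(P_2)^-$, as in $\lattice$) together with the already-known face lattice of a merge in dimension $d-1$ (for the new facets, by induction on $d$) covers all faces. (c) $\Phi$ is injective: the only possible collisions are between a face coming from $P_1$ and one coming from $P_2$, and Lemma~\ref{lm: faces of polytopes}(3),(4) show that an old facet of $P_1$ contains none of the $\cap_{k\in S}H_k$ and an old facet of $P_2$ contains none of the $\cap_{k\in S}H'_k$, so the only identifications are exactly the ones already built into $\sim$. (d) $\Phi$ and $\Phi^{-1}$ are order-preserving: one direction is immediate from the definition of the partial order on $\lattice$ (via representatives); the reverse — that a containment in $P_1\merge P_2$ lifts to a comparability of representatives in $\lattice(P_1)^-\sqcup\lattice(P_2)^-$ — again uses Lemma~\ref{lm: faces of polytopes} to argue that a face of $P_1\merge P_2$ contained in an old $P_1$-facet (resp.\ $P_2$-facet) is itself an old $P_1$-face (resp.\ $P_2$-face), and a face contained in $H_j\merge H'_j$ is handled by the inductive description of $\lattice(H_j\merge H'_j)$.

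I expect the main obstacle to be the bookkeeping around the ``boundary'' faces — those simplices $[u_k:k\in S]\sim[u'_k:k\in S]$ that lie in $F=F'$ and those faces $\cap_{k\in S}H_k\sim\cap_{k\in S}H'_k$ that straddle the gluing hyperplane — where one must check that the local structure (the interval $[\tau,\hat 1]$ or $[\hat 0,\tau]$) in $\lattice$ genuinely agrees with that in $\lattice(P_1\merge P_2)$. Concretely, for a face $\tau=[u_k:k\in S]$ with $|S|\leq d-i-1$ sitting inside $F$, its upper interval in $P_1\merge P_2$ should fuse the upper interval in $P_1$ with the upper interval in $P_2$ along the identifications; verifying this cleanly is where Lemma~\ref{lm: faces of polytopes} and the $i$-simplicity/$( d-i)$-simpliciality hypotheses (Booleanness of the relevant intervals) do the real work, and an induction on $d$ — using Theorem~\ref{prop: face lattice of the merge} for dimension $d-1$ on each merged facet $H_k\merge H'_k$ — is what makes the argument go through. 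The base case $d=2$ (and the degenerate cases where $P_1$ or $P_2$ is a simplex) should be checked directly and is routine from the polygon example.
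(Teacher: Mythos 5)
Your proposal follows essentially the same route as the paper's proof: reduce to showing that each facet of $P_1\merge P_2$ (as listed in Remark~\ref{rem:facets-of-merge}) has the correct lower interval in $\lattice$, using Lemma~\ref{lm: faces of polytopes}(3),(4) for the old facets and induction on dimension for the merged facets $H_k\merge H'_k$, whose lower intervals in $\lattice(P_1)^-$ and $\lattice(P_2)^-$ are exactly $\lattice(H_k)^-$ and $\lattice(H'_k)^-$. The one refinement worth noting is that since passing to $H_k\merge H'_k$ drops both $d$ and $i$ by one, the base case of the induction must be $i=1$ for arbitrary $d$ (the simplicial/connected-sum case), not merely $d=2$.
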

\begin{proof}
The proof is by induction on $d$ and $i$. First we consider the case where $P_1$ and $P_2$ are both $(d-1)$-simplicial $1$-simple $d$-polytopes. This case splits into two subcases:
\begin{enumerate}
	\item If $P_2$ is not a simplex, then $P_1\merge P_2=P_1\# P'_2$. The lattice $\lattice(P_1\merge P_2)$ is obtained from $\lattice(P_1)$ and $\lattice(P'_2)$ by removing facets $[u_1,\ldots,u_d]$ and $[u'_1,\ldots,u'_d]$ and identifying their boundary complexes; this agrees with our definition of $\lattice(P_1)^-\sqcup \lattice(P_2)^-/\sim \ = \lattice$.
	\item If $P_2$ is a simplex, then $P_1\merge P_2$ is $P_1$. That $\lattice$ is equal to $\lattice(P_1)$ in this case, again follows easily from the definition of $\lattice$.
\end{enumerate} 

\noindent This discussion completes the proof of the base case $i=1$ and arbitrary $d\geq 2$. 

Now assume that the statement holds in dimension $\leq d-1$ and consider two $(d-i)$-simplicial $i$-simple $d$-polytopes $P_1$ and $P_2$, where $i\geq 2$. By definition, $\lattice$ and $\lattice(P_1\merge P_2)$ have the same coatoms. So it suffices to show that for every facet $H$ of $P_1\merge P_2$,  the interval $[\hat{0}, H]$ in $\lattice$ is equal to $\lattice(H)$.

First, if $H$ is a facet of $P_1$ not equal to $F, H_1,\dots,H_d$, or $H$ is a facet of $P_2$ that does not contain $v$, then by Lemma \ref{lm: faces of polytopes}, the interval $[\hat{0}, H]$ in $\lattice$ is equal to $\lattice(H)$. For $1\leq k\leq d$,  both $H_k$ and $H'_k$ are $(d-i)$-simplicial $(i-1)$-simple $(d-1)$-polytopes. In particular,
\begin{eqnarray*}
\lattice(H_k)^-&=&\lattice(H_k)\backslash \{\sigma: \sigma \subseteq F\backslash u_k, \;\dim \sigma\geq (d-1)-(i-1)=d-i\},\\
 \lattice(H'_k)^-&=&\lattice(H'_k)\backslash \{\sigma: v\in\sigma,\; u'_k\notin\sigma,\; \dim \sigma<(d-1)-(i-1)=d-i\}.
\end{eqnarray*} 
Hence $[0,H_k]$ computed in $\lattice(P_1)^-$ is $\lattice(H_k)^-$ and  
$[0,H'_k]$ computed in $\lattice(P_2)^-$ is $\lattice(H'_k)^-$. Then the inductive hypothesis implies that $[\hat{0}, H_k\merge H'_k]$ in $\lattice$ is equal to $\lattice(H_k\merge H'_k)$. This proves that $\lattice=\lattice( P_1\merge P_2)$.
\end{proof}

One application of Theorem \ref{prop: face lattice of the merge} is the following result on the $f$-numbers of $P_1\merge P_2$.

 \begin{corollary}\label{prop: f-vector}
	Let $d\geq 2$ and $1\leq i\leq d-1$. Let $P_1$ and $P_2$ be $(d-i)$-simplicial $i$-simple $d$-polytopes that can be merged along a simplex facet $F$ of $P_1$ and a simple vertex $v$ of $P_2$. Then for all $0\leq j\leq d-1$, $f_j(P_1\merge P_2)=f_j(P_1)+f_j(P_2)-\binom{d+1}{j+1}$.
\end{corollary}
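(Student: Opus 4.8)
The plan is to count faces of $P_1\merge P_2$ by exploiting the explicit description of its face lattice $\lattice$ from Theorem \ref{prop: face lattice of the merge}. Since $\lattice = \lattice(P_1)^-\sqcup \lattice(P_2)^- / \sim$, the $j$-faces of $P_1\merge P_2$ are in bijection with the equivalence classes of $j$-faces in the disjoint sum. First I would record, for each fixed dimension $j$, how many $j$-faces of $P_1$ survive passing to $\lattice(P_1)^-$, how many $j$-faces of $P_2$ survive passing to $\lattice(P_2)^-$, and how many of the surviving faces on the two sides get glued together by $\sim$; then $f_j(P_1\merge P_2)$ equals (surviving faces of $P_1$) $+$ (surviving faces of $P_2$) $-$ (number of glued pairs).

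The bookkeeping splits according to how $j$ compares with $d-i$. Recall $\lattice(P_1)^- = \lattice(P_1)\setminus\{\sigma : \sigma\subseteq F,\ \dim\sigma \ge d-i\}$. Since $F$ is a $(d-1)$-simplex, the number of $\ell$-faces of $F$ is $\binom{d}{\ell+1}$, so for $j\ge d-i$ exactly $\binom{d}{j+1}$ of the $j$-faces of $P_1$ are removed (here $j\le d-1$, and for $j=d-1$ the face $F$ itself is removed, consistent with $F$ not being a facet of $P_1\merge P_2$). For $j< d-i$ nothing is removed on the $P_1$ side. Dually, $\lattice(P_2)^- = \lattice(P_2)\setminus\{\sigma: v\in\sigma,\ \dim\sigma<d-i\}$; since $v$ is a simple vertex, $\lattice(P_2/v)$ is the boolean lattice on the $d$ neighbors of $v$, so the number of $\ell$-faces of $P_2$ containing $v$ is $\binom{d}{\ell}$, and for $j<d-i$ exactly $\binom{d}{j}$ of the $j$-faces of $P_2$ are removed, while for $j\ge d-i$ nothing is removed on the $P_2$ side.

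Next the gluing. By Definition \ref{def:lattice}, the identifications are $[u_k:k\in S]\sim[u'_k:k\in S]$ for $|S|\le d-i$ and $\cap_{k\in S}H_k\sim\cap_{k\in S}H'_k$ for $|S|\le i$; by Lemma \ref{lm: faces of polytopes} these are genuine faces of the correct dimensions. A $j$-face is identified across the two sides precisely when it is of the form $[u_k:k\in S]$ with $|S|=j+1\le d-i$ (there are $\binom{d}{j+1}$ such), or of the form $\cap_{k\in S}H_k$ with $|S|=d-j\le i$, i.e. $j\ge d-i$ (there are $\binom{d}{d-j}=\binom{d}{j}$ such). One checks these two families of glued faces are disjoint: a face of the first family lies in $F$ while a face of the second does not (Lemma \ref{lm: faces of polytopes}(1)), so when $j=d-i$ and both families are nonempty there is no double counting. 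Assembling everything: for $j<d-i$, $f_j = (f_j(P_1)) + (f_j(P_2)-\binom{d}{j}) - \binom{d}{j+1}$; for $j\ge d-i$, $f_j = (f_j(P_1)-\binom{d}{j+1}) + (f_j(P_2)) - \binom{d}{j}$. In both cases the total correction is $-\binom{d}{j}-\binom{d}{j+1} = -\binom{d+1}{j+1}$ by Pascal's identity, giving the claim.

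**Main obstacle.** The delicate point is not the arithmetic but verifying that the two stated families of identified faces are exhaustive and disjoint — i.e., that no \emph{other} surviving $j$-faces of $P_1$ and $P_2$ get identified in $\lattice$, and that the faces $[u_k:k\in S]$ and $\cap_{k\in S}H_k$ are never accidentally equal. This is exactly where Lemma \ref{lm: faces of polytopes}(3)--(4) and the $i$-simple/$(d-i)$-simplicial hypotheses are used: they guarantee that a facet other than $F,H_1,\dots,H_d$ of $P_1$ (and its sub-faces), and a facet of $P_2$ not through $v$ (and its sub-faces), contribute identically in $\lattice(P_1)^-$ (resp.\ $\lattice(P_2)^-$) and in $\lattice$, so no spurious collapses occur. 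Once this is in hand, the count is forced. (Alternatively, one could bypass the lattice description entirely and count faces directly in the geometric picture of Definition \ref{def: merging}, classifying each face as living in $P_1$, in $P_2'$, or straddling via some $H_k\merge H'_k$; this yields the same tally but the inclusion–exclusion is messier, so I prefer the lattice route.)
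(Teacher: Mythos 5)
Your proposal is correct and follows essentially the same route as the paper: both use the lattice description from Theorem \ref{prop: face lattice of the merge}, split into the cases $j\leq d-i-1$ and $j\geq d-i$, count the removed faces ($\binom{d}{j}$ faces through $v$, resp.\ $\binom{d}{j+1}$ faces inside $F$) together with the identified faces, and finish with Pascal's identity. The only cosmetic difference is your extra worry about overlap at $j=d-i$, which is vacuous since the two families of identifications occur in disjoint ranges of $j$.
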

\begin{proof}
	First assume that $0\leq j\leq d-i-1$. By definition of $\lattice(P_1\merge P_2)$, each $j$-face of $F$ (i.e., each $(j+1)$-subset of $\{u_1,\ldots, u_d\}$), is identified with the corresponding  $j$-face of $F'$ (i.e., the corresponding $(j+1)$-subset of $\{u'_1,\ldots, u'_d\}$). In addition, all $j$-faces of $P_2$ that contain $v$ (i.e., all $(j+1)$-subsets of $\{v, u'_1,\ldots,u'_d\}$ that contain $v$) are removed from $\lattice(P_1\merge P_2)$. Hence 
	$$f_j(P_1\merge P_2)=f_j(P_1)+f_j(P_2)-\binom{d}{j+1}-\binom{d}{j}=f_j(P_1)+f_j(P_2)-\binom{d+1}{j+1}.$$

	Similarly, for $d-i\leq j\leq d-1$, by definition of $\lattice(P_1\merge P_2)$, all $j$-faces of $P_1$ contained in $F$ (i.e., 
	$(j+1)$-subsets of $\{u_1,\ldots,u_d\}$) are removed from $\lattice(P_1\merge P_2)$, while for each $(d-j)$-subset $S$ of $[d]$, the $j$-face $\cap_{k\in S} H_k$ is identified with the $j$-face $\cap_{k\in S} H'_k$. Hence $f_j(P_1\merge P_2)=f_j(P_1)+f_j(P_2)-\binom{d}{j+1}-\binom{d}{d-j}=f_j(P_1)+f_j(P_2)-\binom{d+1}{j+1}. $ 
\end{proof}

\section{Applications: part I}
\subsection{Infinite families of $(d-i)$-simplicial $i$-simple polytopes for small $d$}
The goal of this section is to  answer Question \ref{quest:simplex-facet} in the affirmative for small values of $d$.
Our starting point is the uniform 8-polytope $2_{41}$ constructed within the symmetry of the $E_8$ group. (It was first discovered by Gosset and Elte; see also \cite[Section 11]{Coexter}). This polytope has $17280$ simplex facets and it is $4$-simplicial and $4$-simple. The polytope $2_{41}$ gives rise to the following $7$-polytopes:
\begin{itemize}
	\item Each nonsimplex facet of $2_{41}$ is the $7$-polytope $2_{31}$. It is $4$-simplicial $3$-simple and it has $576$ simplex facets.
	\item Each vertex figure of $2_{41}$ is the $7$-demicube.
\end{itemize}

Recall that the {\em $d$-demicube} is defined as follows (see \cite[Exercise 4.8.18]{Gru-book}). Consider the $d$-cube $C_d=[0,1]^d$. For each vertex $v$ in $C_d$ whose coordinates have an even number of ones, truncate $C_d$ along the hyperplane that contains all $d$ vertices adjacent to $v$. The resulting polytope is called the $d$-demicube; we denote it by $Q_d$. This polytope has the following  properties:
\begin{itemize}
	\item When $d>4$, $Q_d$ has exactly $2^{d-1}$ simplex facets (these are the facets defined by truncating hyperplanes), 
	and $2d$ non-simplex facets (these are the facets obtained by truncating the facets of $C_d$). Moreover, no two simplex facets are adjacent in $Q_d$. 
	\item When $d\geq 4$, $Q_d$ is $3$-simplicial and $(d-3)$-simple.
\end{itemize}

We are now in a position to prove the main result of this subsection:
\begin{theorem}\label{thm: lower dimensional infinite families}
	For every element of $\{(i,d) : 2\leq i\leq d-2\leq 6\}\backslash\{(3,8), (5,8)\}$, there exists an infinite family of $(d-i)$-simplicial $i$-simple $d$-polytopes, each of which has a simplex facet and a simple vertex not in that facet.
\end{theorem}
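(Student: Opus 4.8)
The plan is to reduce the theorem to the existence of suitable ``building blocks'' and then invoke Corollary~\ref{cor: generate infinite family}, which produces an infinite family out of any single non-simplex $(d-i)$-simplicial $i$-simple $d$-polytope that has a simplex facet and a simple vertex not contained in that facet. So for each admissible pair $(i,d)$ I would exhibit one such polytope. The raw materials are the Gosset--Elte polytope $2_{41}$ (an $8$-polytope, $4$-simplicial $4$-simple, with simplex facets), its facet $2_{31}$ (a $7$-polytope, $4$-simplicial $3$-simple, with simplex facets), the demicubes $Q_d$ for $4\le d\le 7$ ($3$-simplicial $(d-3)$-simple, with simplex facets and, since no two simplex facets are adjacent, simple vertices disjoint from a simplex facet), and their duals (which by the remarks in Section~2 are, respectively, $4$-simplicial $4$-simple, $3$-simplicial $4$-simple, and $(d-3)$-simplicial $3$-simple, and which have simple vertices and, dually to the ``simplex facets'' statements, simplex facets).

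The first step is to check, for each of these polytopes $P$, that it genuinely satisfies hypotheses (1)--(3) of Corollary~\ref{cor: generate infinite family}: $P$ is not a simplex, $P$ has a simplex facet $F$, and $P$ has a simple vertex $v\notin F$. For the demicubes this is exactly what the listed properties give (take a simplex facet $F$; since no two simplex facets are adjacent and $F$ meets every non-simplex facet, a simple vertex of $Q_d$ lies on no simplex facet, hence not on $F$ — one should double-check that $Q_d$ even has a simple vertex, which holds iff $d-3=1$, i.e.\ $d=4$; for $d\ge5$ one instead uses that $Q_d$ is $3$-simplicial and applies the merging machinery not to $Q_d$ itself but, e.g., via its dual or via a direct argument — this is a point to be careful about). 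Dually, $Q_d^*$ has a simple vertex and one must locate a simplex facet avoiding it. For $2_{41}$ and $2_{31}$, being $4$-simple (resp.\ $3$-simple) of dimension $>5$, they have no simple vertex, so again one cannot feed them directly into Corollary~\ref{cor: generate infinite family}; instead one should use the merging operation to \emph{produce} a polytope with both a simplex facet and a simple vertex — for instance merge $2_{41}$ with a polytope supplying a simple vertex, or exploit that merging $P_1$ (with simplex facet) and $P_2$ (with simple vertex) yields output with simplex facet $F_2$ and simple vertex $v_1$.

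The heart of the argument is therefore a table-driven case analysis: list all pairs $(i,d)$ with $2\le i\le d-2\le 6$, i.e.\ $4\le d\le 8$, and for each pair name a concrete $(d-i)$-simplicial $i$-simple $d$-polytope assembled from the raw materials above. The dimensions $4,5,6,7$ and their small-$i$ values are covered by $Q_d$ and $Q_d^*$ (which handle $i=3$ and $i=d-3$ in dimensions $\ge6$, and $i=2$ via $Q_4,Q_5$ after noting $2$-simplicial $=$ ... etc.); dimension $8$ with $i=4$ is covered by $2_{41}$ (self-dual-type data: $4$-simplicial $4$-simple); $i=3,d=8$ and $i=5,d=8$ are precisely the excluded cases, consistent with only $2_{31}$ and its dual being available there and those lacking the needed simple vertex / simplex facet combination. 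For each entry one must verify: non-simplex; has a simplex facet; has a simple vertex off that facet; is $(d-i)$-simplicial; is $i$-simple — the last two following from the explicitly listed properties or from duality, and the simplex-facet/simple-vertex bookkeeping following from the ``no two simplex facets adjacent'' property of demicubes and its dual statement. Finally, once a building block $P$ is in hand for a given $(i,d)$, Corollary~\ref{cor: generate infinite family} immediately yields the desired infinite family, each member of which again has a simplex facet and a simple vertex not on it, as required by the statement.

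The main obstacle, I expect, is the simple-vertex condition for the higher-dimensional inputs: $2_{41}$, $2_{31}$, and $Q_d$ for $d\ge5$ are too ``simple'' (in the $i$-simple sense with large $i$) to possess an honest simple vertex, so one cannot apply Corollary~\ref{cor: generate infinite family} to them directly, and one must instead either pass to duals (to convert ``simplex facet'' into ``simple vertex'' data) or perform one preliminary merge to manufacture a polytope enjoying \emph{both} a simplex facet and a simple vertex not on it — and then check that this manufactured polytope still has a simplex facet and a simple vertex off it, so that iteration is legitimate. Getting the incidences right (that the chosen simplex facet and simple vertex are disjoint, using the non-adjacency of simplex facets in demicubes and the order-reversing duality) is the delicate bookkeeping; everything else is assembling the correct table of examples and citing Theorem~\ref{main theorem} and Corollary~\ref{cor: generate infinite family}.
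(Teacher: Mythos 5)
Your overall strategy --- reduce by duality, manufacture a building block possessing both a simplex facet and a simple vertex via one preliminary merge, then iterate using Corollary~\ref{cor: generate infinite family} --- is exactly the paper's approach for the pairs $(3,6)$, $(3,7)$/$(4,7)$, and $(4,8)$: the paper forms $Q_6\merge Q_6^*$, $2_{31}\merge Q_7^*$, and $2_{41}\merge 2_{41}^*$ and then applies the corollary. You correctly diagnose why the raw materials cannot be fed into the corollary directly (they lack simple vertices), and the fix you propose for those cases is the right one.

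There is, however, a genuine gap covering the majority of the theorem: the column $i=2$ and, dually, $i=d-2$, i.e.\ the nine pairs $(2,4),(2,5),(3,5),(2,6),(4,6),(2,7),(5,7),(2,8),(6,8)$. These all reduce to exhibiting a $(d-2)$-simplicial $2$-simple $d$-polytope with a simplex facet and a simple vertex off it, and none of your raw materials can produce one. Your ``$i=2$ via $Q_4,Q_5$'' fails: $Q_4$ is the $16$-cell, which is simplicial and only $1$-simple; $Q_5$ is indeed $3$-simplicial $2$-simple, but every vertex of a demicube $Q_d$ lies in $2d$ facets, so $Q_5$ has no simple vertex, and its dual $Q_5^*$ is $2$-simplicial $3$-simple, so a preliminary merge $Q_5\merge Q_5^*$ is only guaranteed by Theorem~\ref{main theorem} to be $2$-simplicial, not $3$-simplicial. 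More fundamentally, for $d\geq 6$ no merge of demicubes, Gosset--Elte polytopes, and their duals can ever be $(d-2)$-simplicial, since these inputs are at most $4$-simplicial and merging never increases simpliciality. The paper fills this gap with an explicit new construction: $P_9$ for $d=4$ and its higher-dimensional analog $P^d$, built in Section~6.2 from a pseudo-regular cross-polytope (Theorem~\ref{thm: (d-2)-simplicial $2$-simple construction}), which is $(d-2)$-simplicial $2$-simple with the required simplex facet and simple vertex. Without something playing the role of $P^d$, your table cannot be completed.
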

\begin{proof}
By considering dual polytopes, it suffices to prove the statement for $i\leq d/2\leq 4$. The case of $i=2$ and an arbitrary $d\geq 4$ will be discussed in Section 6. For now, we mention that for $i=2$ and $d=4$, the result follows by applying Corollary \ref{cor: generate infinite family} to $P_9$. (For the description of facets of $P_9$, see Construction \ref{constr:P_9}.)
	Consider the case of $i=3$ and $d=6$. Since both $Q_6$ and $Q^*_6$ are $3$-simplicial $3$-simple, and since $Q_6$ has a simplex facet (in fact, $32$ of them) and $Q^*_6$ has a simple vertex (in fact, $32$ of them), the merge of $Q_6$ and $Q^*_6$, $P=Q_6\merge Q^*_6$, is well-defined; furthermore, $P$ has a simplex facet $F$ and a simple vertex $v$ not contained in $F$. Hence, Corollary \ref{cor: generate infinite family} applies to $P$ and results in a desired infinite family of $3$-simplicial $3$-simple $6$-polytopes. Similarly, in the case of $i=3$ and $d=7$, apply Corollary \ref{cor: generate infinite family} to $P=2_{31}\merge Q_7^*$. Finally, in the case of $i=4$ and $d=8$, apply Corollary \ref{cor: generate infinite family} to $P=2_{41}\merge 2_{41}^*$.
\end{proof}

The proof of Theorem \ref{thm: lower dimensional infinite families} provides the following partial answer to Question \ref{quest:simplex-facet}.
\begin{corollary} \label{cor:i-simplicial-2i-simplex-facet}
Let $2\leq i\leq 4$. There exists an infinite family of $i$-simplicial $i$-simple $2i$-polytopes, each of which has a simplex facet and a simple vertex not in that facet.
\end{corollary}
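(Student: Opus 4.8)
The plan is to derive this as a special case of Theorem \ref{thm: lower dimensional infinite families}. First I would observe that an $i$-simplicial $i$-simple $2i$-polytope is exactly a $(d-i)$-simplicial $i$-simple $d$-polytope with $d=2i$ (so that $d-i=i$). Thus the corollary is precisely the restriction of Theorem \ref{thm: lower dimensional infinite families} to the diagonal pairs $(i,d)=(i,2i)$, and it suffices to check that for $i\in\{2,3,4\}$ the pair $(i,2i)$ belongs to the index set $\{(i,d):2\le i\le d-2\le 6\}\setminus\{(3,8),(5,8)\}$.

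Next I would run through the three cases: $i=2$ gives $(2,4)$, $i=3$ gives $(3,6)$, and $i=4$ gives $(4,8)$. In each case one has $2\le i$ and $i\le d-2$ (namely $2\le 2$, $3\le 4$, $4\le 6$) and $d-2\le 6$ (namely $2,4,6\le 6$), so all three pairs lie in $\{(i,d):2\le i\le d-2\le 6\}$; and none of them equals $(3,8)$ or $(5,8)$. Hence Theorem \ref{thm: lower dimensional infinite families} applies to each and produces an infinite family of $(d-i)$-simplicial $i$-simple $d$-polytopes --- equivalently, of $i$-simplicial $i$-simple $2i$-polytopes --- each member of which carries a simplex facet together with a simple vertex not lying on that facet, which is exactly the assertion.

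There is essentially no obstacle beyond what is already packaged into the proof of Theorem \ref{thm: lower dimensional infinite families}: the actual content lies in exhibiting the three ``building blocks'' --- namely $P_9$ for $i=2$, and the self-dual merges $Q_6\merge Q_6^*$ for $i=3$ and $2_{41}\merge 2_{41}^*$ for $i=4$ --- and then invoking Corollary \ref{cor: generate infinite family} to iterate. The $i$-simpliciality and $i$-simpleness of $Q_6\merge Q_6^*$ and $2_{41}\merge 2_{41}^*$ follow from Theorem \ref{main theorem} (using that $Q_d$ is $3$-simplicial $(d-3)$-simple, so that $Q_6$ is $3$-simplicial $3$-simple, and that $2_{41}$ is $4$-simplicial $4$-simple), while the presence in each block of a simplex facet and of a non-incident simple vertex is read off from the combinatorics of the demicube and the Gosset--Elte polytopes recalled above together with the conclusion of Corollary \ref{cor: generate infinite family}. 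If one wished to make the corollary logically independent of the non-diagonal cases of Theorem \ref{thm: lower dimensional infinite families}, one could simply reproduce this verification for the three diagonal pairs alone; no additional idea is needed.
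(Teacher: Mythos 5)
Your proposal is correct and matches the paper's argument: the corollary is obtained exactly by restricting Theorem \ref{thm: lower dimensional infinite families} to the pairs $(2,4)$, $(3,6)$, $(4,8)$, whose building blocks ($P_9$, $Q_6\merge Q_6^*$, $2_{41}\merge 2_{41}^*$) are the ones the paper uses. No further comment is needed.
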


\subsection{Self-dual polytopes}
Kalai \cite[Problem 19.5.24]{Kalai-skeletons} asked for which values of $i$ and $d$ there are self-dual $i$-simplicial $d$-polytopes other than the $d$-simplex. For the rest of this section, assume that $d=2i$ and consider an $i$-simplicial $i$-simple $2i$-polytope $P$ with a simplex facet $F=[u_1, \dots, u_{2i}]$. As before, assume that $H_1, \dots, H_d$ are the facets of $P$  adjacent to $F$, where $H_k\cap F=[u_1, \dots, \widehat{u_k}, \dots, u_d]$.  Let $\phi: \lattice(P)\to \lattice(P^*)$, $\phi: \lattice(P^*)\to \lattice(P)$ be the order-reversing bijections on the face lattices. Then $P^*$ is an $i$-simplicial $i$-simple $2i$-polytope with a simple vertex $v:=\phi(F)$. The neighbors of $v$ are $u'_k:=\phi(H_k)$ for $1\leq k\leq d$. Let $H'_k$ be the facet of $P^*$ determined by the edges $vu'_1, \dots, \widehat{vu'_k}, \dots, vu'_d$. In other words, $H'_k=(\vee_{j\in [d]\backslash k} u'_j)\vee v$, and hence $$\phi(H'_k)=\left(\wedge_{j\in [d]\backslash k}\phi(u'_j)\right)\wedge \phi(v)=\left(\wedge_{j\in [d]\backslash k}H_j\right)\wedge F=u_k.$$

The next proposition is our main tool for constructing self-dual $i$-simplicial $i$-simple $2i$-polytopes. We follow assumptions and notation introduced in the previous paragraph.
\begin{proposition}\label{prop: self-dual}
	The merge of $P$ and $P^*$ along $F=[u_1, \dots, u_d]$ and $v$ (whose neighbors are ordered as $u'_1, \dots, u'_d$) is a self-dual polytope.
\end{proposition}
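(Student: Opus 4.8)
The plan is to exhibit an explicit order-preserving bijection from $\lattice(P\merge P^*)$ to $\lattice((P\merge P^*)^*)$, and by Theorem~\ref{prop: face lattice of the merge} it is equivalent to produce an order-preserving bijection $\Psi\colon \lattice \to \lattice^{\mathrm{op}}$, where $\lattice$ is the quotient poset of Definition~\ref{def:lattice} built from $P_1=P$ and $P_2=P^*$. The natural candidate is to glue together the two order-reversing bijections $\phi\colon\lattice(P)\to\lattice(P^*)$ and $\phi\colon\lattice(P^*)\to\lattice(P)$: on the part of $\lattice$ coming from $\lattice(P)^-$ we would apply $\phi\colon\lattice(P)\to\lattice(P^*)$ landing in the $\lattice(P^*)^-$ part, and on the part coming from $\lattice(P^*)^-$ we would apply $\phi\colon\lattice(P^*)\to\lattice(P)$ landing in the $\lattice(P)^-$ part. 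Since each $\phi$ is order-reversing, the combined map is order-reversing from $\lattice(P)^-\sqcup\lattice(P^*)^-$ to itself (swapping the two summands), which is exactly what we want once we check it descends to the quotient.

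First I would verify that $\phi$ respects the two subposets, i.e. that $\phi$ maps $\lattice(P)^-$ onto $\lattice(P^*)^-$ and $\lattice(P^*)^-$ onto $\lattice(P)^-$. By definition $\lattice(P)^- = \lattice(P)\setminus\{\sigma : \sigma\subseteq F,\ \dim\sigma\geq d-i\}$; applying the order-reversing $\phi$, the removed set $\{\sigma\subseteq F : \dim\sigma\geq d-i\}$ becomes $\{\phi(\sigma) : \phi(\sigma)\supseteq \phi(F)=v,\ \dim\phi(\sigma)\leq d-1-(d-i)=i-1 < d-i\}$, which is precisely the set removed in forming $\lattice(P^*)^-$ (here we use $\dim P^* = d = 2i$ so a face of $P$ of dimension $\geq d-i=i$ corresponds under $\phi$ to a face of $P^*$ of dimension $\leq i-1$, and $i-1<d-i$ exactly because $d=2i$). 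The same computation in the other direction handles $\lattice(P^*)^-\to\lattice(P)^-$. This is the step where the hypothesis $d=2i$ is essential: it is what makes the ``simplicial truncation'' and the ``simple truncation'' match up under duality.

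Next I would check that the combined map descends to the quotient $\lattice = (\lattice(P)^-\sqcup\lattice(P^*)^-)/{\sim}$, i.e. that it carries an equivalence class to an equivalence class. There are two families of identifications to track. For a subset $S\subseteq[d]$ with $|S|\leq d-i=i$, the class $[u_k : k\in S]\sim[u'_k : k\in S]$ must map to a single class: using the computations in the paragraph preceding Proposition~\ref{prop: self-dual}, $\phi([u_k:k\in S]) = \wedge_{k\in S}\phi(u_k) = \wedge_{k\in S} H_k$ viewed inside $\lattice(P^*)$ — wait, more carefully, $\phi(u_k)=H'_k$ is a facet of... I need to be careful about which $\phi$ is which; the cleanest bookkeeping is to note that $\phi$ sends the vertex $u_k$ of $P$ to the facet $H_k'$ of $P^*$ (since $\phi(H_k')=u_k$), and sends the vertex $u_k'$ of $P^*$ to the facet $H_k$ of $P$; then $\phi(\vee_{k\in S}u_k)=\wedge_{k\in S}H_k'$ and $\phi(\vee_{k\in S}u_k')=\wedge_{k\in S}H_k$, and these two are identified in $\lattice$ precisely by the second family of relations (valid since $|S|\leq i$). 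Symmetrically, $\phi$ sends the class $\wedge_{k\in S}H_k\sim\wedge_{k\in S}H_k'$ (for $|S|\leq i$) to the class $[u_k':k\in S]\sim[u_k:k\in S]$ (valid since $|S|\leq i = d-i$). Thus $\Psi$ is well-defined on $\lattice$; it is a bijection because it is built from bijections that are mutually inverse up to the swap of summands; and it is order-reversing because each $\phi$ is. Finally, an order-reversing bijection of $\lattice$ onto itself is exactly an isomorphism $\lattice\cong\lattice^{\mathrm{op}}$, i.e. $\lattice(P\merge P^*)\cong\lattice((P\merge P^*)^*)$, so $P\merge P^*$ is self-dual.

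\textbf{Main obstacle.} The routine parts are the dimension arithmetic; the real content — and the step most likely to need care — is checking compatibility with the quotient, specifically that the two ``merge identifications'' (the simplex-face identifications $[u_k:k\in S]\sim[u_k':k\in S]$ and the facet-intersection identifications $\cap H_k\sim\cap H_k'$) are interchanged by $\phi$ rather than being sent somewhere incoherent. This is exactly where the explicit formula $\phi(H_k')=u_k$ derived just before the proposition statement is used, and one must track the two copies of $\phi$ (on $\lattice(P)$ and on $\lattice(P^*)$) without conflating them. I would organize the proof so that this verification is the single lemma-like computation, with everything else following formally.
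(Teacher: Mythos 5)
Your proposal is correct and follows essentially the same route as the paper: both arguments take the order-reversing involution $\phi$ on $\lattice(P)\sqcup\lattice(P^*)$, observe (using $d=2i$) that it restricts to an involution of $\lattice(P)^-\sqcup\lattice(P^*)^-$, check via the identities $\phi(\vee_{k\in S}u_k)=\wedge_{k\in S}H'_k$ and $\phi(\vee_{k\in S}u'_k)=\wedge_{k\in S}H_k$ that it descends to the quotient $\lattice$, and conclude by Theorem~\ref{prop: face lattice of the merge}. The bookkeeping you flag as the main obstacle is exactly the computation the paper isolates as its displayed equation, and your resolution of it is the correct one.
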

\begin{proof}
	The map $\phi: \lattice(P)\to \lattice(P^*), \lattice(P^*)\to \lattice(P)$ provides us with an order-reversing involution on $\lattice(P)\sqcup \lattice(P^*)$. Since $\phi(H_k)=u'_k$ and $\phi(H_k')=u_k$, it follows that for $S\subseteq [d]$, 
	\begin{equation}\label{eq1}
		\phi(\vee_{k\in S} u_k)=\wedge_{k\in S} H'_k,\quad  \phi(\vee_{k\in S}u'_k)=\wedge_{k\in S}H_k.
	\end{equation}
	In particular, $\phi$ maps $\ell$-faces of $F$ to $(d-\ell-1)$-faces containing $v$. Since $d=2i$, it follows that $\phi$ induces an order-reversing involution on $\lattice(P)^-\sqcup \lattice(P^*)^-$. Furthermore, by (\ref{eq1}), this involution descends to an order-reversing involution on the quotient $\lattice$ described in Definition \ref{def:lattice}. Thus $\lattice$ is a self-dual lattice. The result follows since by Theorem \ref{prop: face lattice of the merge}, $\lattice=\lattice(P\merge P^*)$.
\end{proof}
\begin{theorem} \label{thm:inf-many-self-dual}
For all $2\leq i\leq 4$, there exists an infinite family of self-dual $i$-simplicial $2i$-polytopes.
\end{theorem}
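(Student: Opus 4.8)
\textbf{Proof plan for Theorem~\ref{thm:inf-many-self-dual}.}
The strategy is to combine Proposition~\ref{prop: self-dual} with Corollary~\ref{cor: generate infinite family}. By Proposition~\ref{prop: self-dual}, once we have an $i$-simplicial $i$-simple $2i$-polytope $P$ with a simplex facet $F$, the polytope $R:=P\merge P^*$ (merged along $F$ and $v=\phi(F)$, with the neighbors of $v$ ordered as $u'_k=\phi(H_k)$) is self-dual, and by Theorem~\ref{main theorem} it is again $i$-simplicial and $i$-simple (hence an $i$-simplicial $i$-simple $2i$-polytope by Corollary~\ref{cor:equality}, i.e.\ $f_{i-1}=f_i$). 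So the whole task reduces to: (a) exhibit, for each $i\in\{2,3,4\}$, a suitable $P$; and (b) arrange for the self-dual merge $R$ itself to admit a simplex facet and a simple vertex not contained in that facet, so that iterated self-merging via Corollary~\ref{cor: generate infinite family} produces an infinite family. Note that iterating $R\merge R$ preserves self-duality only if we are careful; alternatively, one can iterate $P\merge(\cdots\merge(P\merge P^*))$ — but the cleanest route is to check that $R$ (being self-dual) has both a simplex facet $F_R$ and, dually, a simple vertex $v_R$, and that we may choose these so $v_R\notin F_R$; then Corollary~\ref{cor: generate infinite family} applied to $R$ gives the infinite family directly.

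For step (a), the building blocks come from the polytopes already introduced in Section~5.1. For $i=4$ we take $P=2_{41}$, the uniform $8$-polytope, which is $4$-simplicial $4$-simple and has $17280$ simplex facets; here $P^*$ is $4$-simplicial $4$-simple with many simple vertices, and indeed the proof of Theorem~\ref{thm: lower dimensional infinite families} already considers $2_{41}\merge 2_{41}^*$. For $i=2$ we take $P=P_9$, the unique $9$-vertex $2$-simplicial $2$-simple $4$-polytope, which has a simplex facet (see Construction~\ref{constr:P_9}); by Corollary~\ref{cor:i-simplicial-2i-simplex-facet} (or directly) this is a legitimate choice. For $i=3$ we need a $3$-simplicial $3$-simple $6$-polytope with a simplex facet; the demicube $Q_6$ works, since for $d>4$ the $d$-demicube $Q_d$ has $2^{d-1}$ simplex facets and is $3$-simplicial $(d-3)$-simple, and for $d=6$ we have $d-3=3=i$. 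Thus in each case $P$ and $P^*$ can be merged, and Proposition~\ref{prop: self-dual} delivers a self-dual $i$-simplicial $i$-simple $2i$-polytope $R$.

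For step (b), I would argue as follows. Because $R$ is self-dual, it has a simplex facet if and only if it has a simple vertex; so it suffices to produce one simplex facet of $R$. From Remark~\ref{rem:facets-of-merge}, the facets of $R=P\merge P^*$ are the ``old'' facets of $P$ (all but $F,H_1,\dots,H_d$), the old facets of $P^*$ (all but $H'_1,\dots,H'_d$), and the ``new'' merged facets $H_k\merge H'_k$. If $P$ has a simplex facet other than $F$ and not adjacent to $F$ — which holds for $2_{41}$ and for $Q_6$ since no two simplex facets of a demicube are adjacent, and which can be checked directly for $P_9$ — then that facet survives as a simplex facet of $R$, and dually a simple vertex of $R$ survives; one then checks (using $f_0(R)=f_0(P)+f_0(P^*)-(d+1)>f_0(P)$ from Lemma~\ref{lm: vertices after merging}, which gives room) that the surviving simple vertex can be taken outside the surviving simplex facet, exactly as in the proof of Corollary~\ref{cor: generate infinite family}. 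Then Corollary~\ref{cor: generate infinite family} applied to $R$ yields an infinite family of $i$-simplicial $i$-simple $2i$-polytopes; since each member is obtained by merging $R$ with itself along a simplex facet and a simple vertex, and $R$ is self-dual, one verifies via Proposition~\ref{prop: self-dual}-style reasoning (matching the order-reversing involutions across the pieces) that each member is again self-dual, completing the proof.

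\textbf{Main obstacle.} The routine part is invoking Proposition~\ref{prop: self-dual} and Theorem~\ref{main theorem}; the delicate point is step (b): ensuring that the self-dual merge $R$ retains a simplex facet and a simple vertex in ``general enough'' position that Corollary~\ref{cor: generate infinite family} applies \emph{and} that the iterated merges stay self-dual. Keeping track of which vertices/facets are identified or deleted under $\merge$ (Remark~\ref{rem:facets-of-merge}, Definition~\ref{def:lattice}) while simultaneously respecting the duality involution $\phi$ is where the real care is needed; I expect this to require a small explicit bookkeeping argument for each of $i=2,3,4$, or a uniform lemma stating that a self-dual $i$-simplicial $i$-simple $2i$-polytope with a non-simplex facet and at least one simplex facet not adjacent to a chosen one can always be self-merged to a larger such polytope.
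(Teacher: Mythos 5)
Your ingredients are the right ones (Proposition~\ref{prop: self-dual} plus the building blocks $P_9$, $Q_6$, $2_{41}$ from Section~5.1), but the way you assemble them contains a genuine gap, and it is a gap the paper's own proof avoids entirely. The problem is in your step (b): Corollary~\ref{cor: generate infinite family} applied to $R=P\merge P^*$ produces an infinite family of $i$-simplicial $i$-simple polytopes, but it says nothing about self-duality, and merging $R$ with itself along an arbitrary simplex facet and simple vertex has no reason to be self-dual. Your closing sentence --- that ``one verifies via Proposition~\ref{prop: self-dual}-style reasoning \dots that each member is again self-dual'' --- is precisely the statement that would need a proof, and it is the hardest part of your route: you would have to show that the duality involution of $R$ can be chosen to carry the merge facet to the merge vertex with compatible orderings at every stage of the iteration. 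You correctly flag this as the delicate point, but you do not resolve it.

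The paper's proof inverts the order of operations and thereby makes the issue disappear: it first builds an infinite family $\{P_n\}$ of $i$-simplicial $i$-simple $2i$-polytopes each having a simplex facet (this is exactly Corollary~\ref{cor:i-simplicial-2i-simplex-facet}, obtained by iterated merging where self-duality plays no role), and only then applies Proposition~\ref{prop: self-dual} once to each $P_n$, producing the self-dual polytope $P_n\merge P_n^*$. Since $f_0(P_n)$ is strictly increasing, so is $f_0(P_n\merge P_n^*)$ by Lemma~\ref{lm: vertices after merging}, and the family is infinite. No iterated self-dual merging, no bookkeeping of involutions across pieces, and no need to verify that the self-dual merge retains a simplex facet. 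If you want to salvage your route, the fix is to apply Proposition~\ref{prop: self-dual} afresh at each stage (setting $R_{n+1}=R_n\merge R_n^*$) rather than Corollary~\ref{cor: generate infinite family}, and to check that each $R_n$ still has a simplex facet; but the paper's ordering is strictly simpler.
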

\begin{proof}
Let $2\leq i\leq 4$. By Corollary \ref{cor:i-simplicial-2i-simplex-facet}, there exists an infinite family of  $i$-simplicial $i$-simple $2i$-polytopes each of which has a simplex facet. The result follows by applying Proposition \ref{prop: self-dual} to this family.
\end{proof}

\section{Applications: part II}
This section is devoted to $(d-2)$-simplicial $2$-simple $d$-polytopes for all $d\geq 4$. We show that for such values of parameters, the answer to Question \ref{quest:simplex-facet} is yes, and, in fact, that  for every $d\geq 4$, there are $2^{\Omega(N)}$ combinatorial types of $(d-2)$-simplicial $2$-simple $d$-polytopes with at most $N$ vertices, each of which has a simplex facet and a simple vertex. Section 6.1 concentrates on a few constructions for $d=4$; Section 6.2 treats the general case.

\subsection{Revisitng $2$-simplicial $2$-simple $4$-polytopes}
By a result of Paffenholz and Werner \cite{PafWer}, there exist infinite families of $2$-simplicial $2$-simple $4$-polytopes each of which has a simplex facet and a simple vertex. This solves Question \ref{quest:simplex-facet} in the affirmative in dimension $d=4$.
	
In this section, we provide alternative (and more symmetric) constructions. We start by revisiting the construction from \cite{PafWer} of $P_9$ --- the unique $2$-simplicial $2$-simple $4$-polytope with nine vertices --- casting it in a way that will help us construct higher-dimensional analogs of $P_9$  in Section 6.2. We then provide another construction of a highly symmetric $2$-simplicial $2$-simple $4$-polytope with $18$ vertices that appears to be new. The promised infinite families are obtained by merging $k$ copies of $P_9$ (respectively, $P_{18}$) for all natural numbers $k\geq 2$. The cross-polytope is featured prominently in our constructions, and we often abbreviate it as $\CP$. (The notion of a {\em point beyond or beneath a facet} is defined in \cite[page 78]{Gru-book}.)

\begin{construction} \label{constr:P_9}
To construct $P_9$, start with a regular $4$-simplex $\Sigma:=[u'_1,u'_2,u'_3,u'_4,u'_5]$. Now add the vertices $u_1,u_2,u_3,v_2$ in the following way. (Why we label the vertices in this fashion will become clear in Section 6.2.) For $i=1,2,3$, place $u_i$ in the affine hull of the facet $\Sigma\backslash u'_i$ of $\Sigma$ so that it is positioned beyond the $2$-face $\Sigma\backslash u'_iu'_5$ and so that $[u_1, u_2, u_3, u'_1, u'_2, u'_3]$ is a 3-cross-polytope; cf.~Definition \ref{def: the first polytope P_1} below. (Hence $u_i$ can be thought of as a perturbation of the barycenter of $[u'_j,u'_k, u'_\ell]$, where $\{i, j, k, \ell\}=[4]$.) Then position $v_2$ on the intersection of the affine hulls of $[u'_1,u'_4,u_2,u_3]$, $[u'_2,u'_4,u_1,u_3]$, and $[u'_3,u'_4,u_1,u_2]$ (this intersection is a line) and beyond the hyperplane $\aff(u'_4,u_1,u_2,u_3)$;  cf.~Definitions \ref{def:a_i} and \ref{def: (d-2)-simplicial 2-simple d-polytope}. (Thus, $v_2$ is a special perturbation of the barycenter of $[u_1,u_2,u_3,u_4']$). 
 
The resulting polytope has nine vertices $\{v_2, u_1,u_2, u_3, u'_1, \ldots, u'_5\}$; it is also convenient to let $v_1=u'_4$. Figure \ref{Fig: P_9} shows part of the Schlegel diagram of $P'_9=\conv(V(P_9)\backslash u'_5)$. The complete list of facets of $P_9$ is given as follows (cf.~Lemma \ref{lm: list of facets}):

	\begin{enumerate}
		\item a $\CP$ with antipodal facets $[u_1, u_2, u_3]$ and $[u'_1, u'_2, u'_3]$ (colored in blue) and a simplex $[u'_1, u'_2, u'_3, u'_5]$;
		\item three bipyramids $[u_1, u'_5, u'_2, u'_3, u'_4]$, $[u_2, u'_5, u'_1, u'_3, u'_4]$, and $[u_3, u'_5, u'_1, u'_2, u'_4]$, where the pairs of suspension vertices are  $(u_1, u'_5)$, $(u_2, u'_5)$, and $(u_3, u'_5)$, respectively;		
		\item three more bipyramids $[v_2, u'_1, u_2, u_3, v_1]$ (colored in purple), $[v_2, u'_2, u_1,u_3, v_1]$, and $[v_2, u'_3, u_1, u_2, v_1]$, where the pairs of suspension vertices are  $(v_2, u'_1)$, $(v_2, u'_2)$, and $(v_2, u'_3)$, respectively;	
		\item another simplex $[v_2, u_1, u_2, u_3]$ (colored in orange).
	\end{enumerate}
\end{construction} 
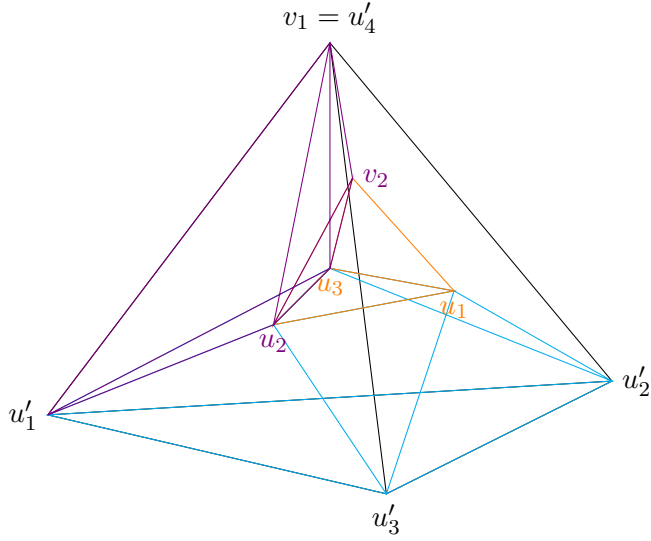
\begin{figure}[ht]
	\centering
\begin{tikzpicture}[scale=1.5]
			\draw (0,0)node[below]{$u'_3$}--(2,1)node[right]{$u'_2$}--(-0.5, 4)node[above]{$v_1=u'_4$}--(-3, 0.7)node[left]{$u'_1$}--(0,0)--(-0.5, 4);
			\draw (-3, 0.7)--(2,1);
			\draw[cyan] (-1, 1.5)--(0.6, 1.8)--(-0.5, 2)--(-1,1.5)--(0,0)--(0.6, 1.8)--(2,1)--(0,0)--(-3, 0.7)--(-1, 1.5)--(-0.5, 2)--(2,1)--(-3, 0.7)--(-0.5, 2);
			\draw [orange] (-1, 1.5)--(0.6, 1.8)node[below]{$u_1$}--(-0.5, 2)node[below]{$u_3$}--(-0.3, 2.8)--(-1, 1.5)--(-0.5, 2);
			\draw[orange] (-0.3, 2.8)--(0.6, 1.8);
			\draw[violet] (-3, 0.7)--(-0.5, 4)--(-0.3, 2.8)node[right]{$v_2$}--(-0.5, 2)--(-1, 1.5)--(-3, 0.7)--(-0.5, 2)--(-0.5, 4)--(-1, 1.5)node[below]{$u_2$}--(-0.3, 2.8);
		\end{tikzpicture}
    \caption{Parts of the Schlegel diagrams of $P'_9$.}
    \label{Fig: P_9}
\end{figure}

The list of facets shows that $P_9$ is $2$-simplicial. The $f$-vector of $P_9$ is symmetric, namely, $f(P_9)=(9,26,26,9)$. Thus, by Corollary \ref{cor:equality}, $P_9$ is also $2$-simple. Furthermore, $P_9$ has two pairs of a simplex facet and a simple vertex not in that facet: $([v_2, u_1, u_2, u_3], u'_5)$ and $([u_1', u_2', u_3', u_5'],v_2)$. Take two copies of $P_9$,  $P^l_9$ and $P^r_9$, and consider the merge $P^l_9\merge P^r_9$ along $[v_2, u_1, u_2, u_3]$ from $P^l_9$ and $u'_5$ from $P^r_9$. Since the facets of $P_9$ containing $u'_5$ consist of a simplex and three bipyramids, depending on the order in which we list the neighbors of $u'_5$, the cross-polytopal facet of $P_9^l$ will either be merged with a $3$-simplex or with a bipyramid of $P_9^r$, resulting in two distinct combinatorial types of $2$-simplicial $2$-simple $4$-polytopes, each of which has a simplex facet and a simple vertex not in that facet. This observation will allow us to construct exponentially many (in the number of vertices) $2$-simplicial $2$-simple $4$-polytopes. We will return to this discussion (and provide many more details) in Section 6.2 after we construct a $d$-dimensional analog of $P_9$ for all $d\geq 4$; see Theorem \ref{prop: counting combinatorial types} and Remark \ref{rem:exp-many-d=4}.

How does merging with $P_9$ affect the $f$-numbers? Let $Q$ be a $2$-simplicial $2$-simple $4$-polytope that has a simplex facet and a simple vertex not in this facet (for instance, $Q=P_9$). Then $P_9\merge Q$ and $Q\merge P_9$ are both defined and by Corollary \ref{prop: f-vector}, 
\begin{eqnarray*}f(P_9\merge Q)- f(Q) =f(Q\merge P_9) - f(Q)&=&f(P_9)-\left(\binom{5}{1}, \binom{5}{2},\binom{5}{3}, \binom{5}{4}\right)\\
&=&(9,26,26,9)-(5,10,10,5)=(4, 16, 16, 4).\end{eqnarray*}
Recall that the toric $g_2$-number of a $2$-simplicial $4$-polytope is given by $g_2^{\rm{toric}}=f_1-4f_0+10$ and that any polytope with $g_2^{\rm{toric}}=0$ is called an \emph{elementary} polytope. It then follows that $P_9$ is an elementary polytope and that $g_2^{\rm{toric}}(P_9\merge Q)=g_2^{\rm{toric}}(Q\merge P_9)=g_2^{\rm{toric}}(Q)$. In other words, if $Q$ is also an elementary polytope, then so are  $P_9\merge Q$ and $Q\merge P_9$. (Elementary polytopes play an important role in the Lower Bound Theorem, see \cite{Kalai87}.)

It is worth pointing out that if one applies to $Q$ the second construction from \cite[Section 3.2]{PafWer}, the resulting polytope $\mathcal{I}^2(Q)$ has the same $f$-vector as $f(P_9\merge Q)=f(Q\merge P_9)$; see \cite[Theorem 3.7]{PafWer}. At the same time, 
both polytopes $P_9\merge Q$ and $Q\merge P_9$ are different from $\mathcal{I}^2(Q)$. Indeed, merging with $P_9$, on the left or on the right, always generates a facet (contributed by the cross-polytopal facet of $P_9$) that is isomorphic to either $\CP$ or the connected sum of $\CP$ with another $3$-polytope, while in the second construction of \cite{PafWer}, all new facets are stacked $3$-polytopes with either $4$, $5$, or $6$ vertices. 

Our next task is to describe another highly-neighborly $2$-simplicial $2$-simple $4$-polytope with a simplex facet and a simple vertex. This polytope has $18$ vertices and we denote it by $P_{18}$.
\begin{construction}
    We start with a regular $3$-simplex $F=[v_1,v_2,v_3,v_4]$ in $\R^3\times \{0\} $. Specifically, let \begin{equation}
		v_1=(0,0,0,0), \, v_2=(2,2, 0,0), \, v_3=(2,0,2,0),\, v_4=(0,2,2,0).
	\end{equation}
    Define $u=(1,1,1,h)$ for some $h>0$. Let $0<\epsilon\ll 1$. For all distinct $1\leq i,j,k\leq 4$, let $$u_{ji, k}=u_{ij, k}=\frac{1}{2}(v_i+v_j)+\epsilon(u+v_k-v_i-v_j).$$ That is, 
		$$u_{12,3}=(1+\epsilon, 1-\epsilon, 3\epsilon, h\epsilon), \, u_{12,4}=(1-\epsilon, 1+\epsilon, 3\epsilon, h\epsilon),\,  u_{13,2}=(1+\epsilon,3\epsilon, 1-\epsilon, h\epsilon),$$
 $$u_{13,4}=(1-\epsilon, 3\epsilon, 1+\epsilon, h\epsilon),\, u_{14,2}=(3\epsilon, 1+\epsilon, 1-\epsilon, h\epsilon), \, u_{14,3}=(3\epsilon, 1-\epsilon, 1+\epsilon, h\epsilon), $$
$$u_{23,1}=(2-3\epsilon, 1-\epsilon, 1-\epsilon, h\epsilon), \,  u_{23,4}=(2-3\epsilon, 1+\epsilon, 1+\epsilon, h\epsilon),\, u_{24,1}=(1-\epsilon,2-3\epsilon,1-\epsilon, h\epsilon), $$
  $$ u_{24,3}=(1+\epsilon, 2-3\epsilon, 1+\epsilon, h\epsilon),\, u_{34,1}=(1-\epsilon, 1-\epsilon, 2-3\epsilon, h\epsilon),\, u_{34,2}=(1+\epsilon, 1+\epsilon, 2-3\epsilon, h\epsilon).$$

    Note that each $u_{ij, k}$ can be viewed as a certain perturbation of the barycenter of $[v_i,v_j]$ that keeps it in the hyperplane defined by $[u, v_i, v_j, v_k]$.  Note also that the set of vertices $\{u_{1i, j}: \{i, j\}\in \{2,3,4\}\}$ forms a hexagon $H_1$ that lies in the affine $2$-space $x_1+x_2+x_3=2+3\epsilon, x_4=h\epsilon.$ Similarly, the sets of vertices $$\{u_{2i, j}: \{i, j\}\in \{1,3,4\}\}, \,\{u_{3i, j}: \{i, j\}\in \{1,2,4\}\},\mbox{ and }\{u_{4i, j}: \{i, j\}\in \{1,2,3\}\}$$  form hexagons $H_2, H_3, H_4$ in the planes defined by 
		\begin{eqnarray*}\{x_1+x_2-x_3=2-3\epsilon,\; x_4=h\epsilon\}, \quad \{x_1-x_2+x_3=2-3\epsilon,\; x_4=h\epsilon\}, \quad \mbox{and} \\
		\{-x_1+x_2+x_3=2-3\epsilon, \; x_4=h\epsilon\},\end{eqnarray*}
    respectively. It follows that 
		\begin{eqnarray*}
    \aff(v_1\cup H_1)&=&\{{\mathbf x}\in\R^4 \ : \ -h\epsilon(x_1+x_2+x_3)+(2+3\epsilon)x_4=0\}, \\
    \aff(v_2\cup H_2)&=&\{{\mathbf x}\in\R^4 \ : \  h\epsilon(x_1+x_2-x_3)+(2+3\epsilon)x_4=4h\epsilon\},\\
    \aff(v_3\cup H_3)&=&\{{\mathbf x}\in\R^4 \ : \ h\epsilon(x_1+x_3-x_2)+(2+3\epsilon)x_4=4h\epsilon\},\\
    \aff(v_4\cup H_4)&=&\{{\mathbf x}\in\R^4 \ : \ h\epsilon(x_2+x_3-x_1)+(2+3\epsilon)x_4=4h\epsilon\}.
		\end{eqnarray*}
    The intersection of these four hyperplanes is the point $(1,1,1,\frac{3h\epsilon}{2+3\epsilon})$; we denote it by $w$.
\end{construction}
Define $P'_{18}$ as the convex hull of all $17$ vertices $\{w, v_1, \dots, v_4, u_{ij, k}: 1\leq i, j, k\leq 4\}$. When $\epsilon$ is very small, the polytope $P'_{18}$ has the following $19$ facets (see Figure \ref{Fig: P_18} for part of the Schlegel diagram). We used $\epsilon=0.05$, $h=2$ and verified this list with software SAGE. 
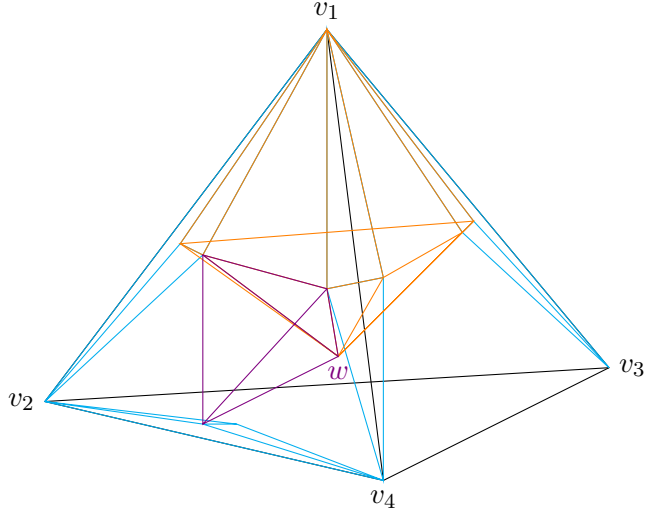
\begin{figure}\centering
	\begin{tikzpicture}[scale=1.5]
		\draw (0,0)node[below]{$v_4$}--(2,1)node[right]{$v_3$}--(-0.5, 4)node[above]{$v_1$}--(-3, 0.7)node[left]{$v_2$}--(0,0)--(-0.5, 4);
		\draw (-3,0.7)--(2,1);
		\draw[cyan] (2,1)--(-0.5, 4)--(-3,0.7)--(0,0);
		\draw[cyan] (-1.6, 2)--(-0.5, 4)--(-1.8, 2.1)--(-3, 0.7)--(-1.6, 2)--(-1.8, 2.1);
		\draw[cyan] (-0.5, 1.7)--(-0.5, 4)--(0, 1.8)--(0,0)--(-0.5, 1.7)--(0, 1.8);
		\draw[cyan] (0.7, 2.2)--(2,1)--(0.8, 2.3)--(-0.5, 4)--(0.7, 2.2)--(0.8, 2.3);
		\draw[cyan] (-1.6, 0.5)--(0,0)--(-1.3, 0.5)--(-3, 0.7)--(-1.6, 0.5)--(-1.3, 0.5);
		\draw[orange] (-1.8, 2.1)--(-1.6, 2)--(-0.5, 1.7)--(0, 1.8)--(0.7, 2.2)--(0.8, 2.3)--cycle;
		\draw[orange](-0.5, 4)--(-1.8, 2.1)--(-0.4, 1.1)--(-1.6, 2)--(-0.5, 4)--(-0.5, 1.7)--(-0.4, 1.1)--(0, 1.8)--(-0.5, 4)--(0.7, 2.2)--(-0.4, 1.1)--(0.8, 2.3)--(-0.5, 4);
		\draw[violet] (-1.6, 2)--(-0.5, 1.7)--(-1.6, 0.5)--(-1.6, 2)--(-0.4, 1.1)node[below]{$w$}--(-1.6, 0.5);
		\draw[violet](-0.4, 1.1)--(-0.5, 1.7);
	\end{tikzpicture}
 \caption{Parts of the Schlegel diagrams of $P'_{18}$.}
\label{Fig: P_18}
\end{figure}
 \begin{enumerate}
	\item Six simplices of the form $[v_i, v_j, u_{ij, k}, u_{ij, m}]$, where $\{i, j, k, m\}=[4]$. Parts of four of them are shown in blue in Figure \ref{Fig: P_18}.
	\item Four simplices of the form $[u_{ij, k}, u_{ik, j}, u_{jk, i}, w]$, where $1\leq i, j, k\leq 4$ are distinct. One such simplex is shown in purple in Figure \ref{Fig: P_18}.
	\item The simplex $[v_1, v_2, v_3, v_4]$.
	\item Four polytopes of the form $[v_i, w, u_{ij, k}, u_{ij, m}, u_{ik, j}, u_{ik, m}, u_{im, j}, u_{im, k}]$. Each is the suspension over $H_i$, with suspension vertices $v_i$ and $w$. (Here $\{i, j, k, m\}=[4]$.) One such polytope is shown in orange in Figure \ref{Fig: P_18}.
	\item Four cross-polytopes of the form $[v_i, v_j, v_k, u_{ij, k}, u_{ik, j}, u_{jk, i}]$, where $1\leq i, j, k\leq 4$ are distinct.
\end{enumerate}

To complete the construction of $P_{18}$,  we apply a projective transformation $\pi$ to $P'_{18}$ to ensure that the adjacent facets of $G=[v_1,v_2, v_3, v_4]$, i.e., the four cross-polytopes from the last item, intersect at a point $w'$ beyond $G$. We let $P_{18}=\conv(\pi(P'_{18})\cup w')$. Then $G$ is not a facet of $P_{18}$ and each facet $[v_i, v_j, v_k, u_{ij, k}, u_{ik, j}, u_{jk, i}]$ is replaced by its connected sum with $[v_i, v_j, v_k, w']$. It can be checked that $f(P_{18})=(18,64,64,18)$. Since $P_{18}$ is a $2$-simplicial $4$-polytope that has $f_1=f_2$, it follows by Corollary \ref{cor:equality} that $P_{18}$ is also $2$-simple. A direct computation shows that $g^{\mathrm{toric}}_2(P_{18})=2$. In other words, $P_{18}$ is not elementary.

Observe that $P_{18}$ has a simple vertex $w'$ and many simplex facets not containing $w'$ (see the first item in the list). Thus we can iteratively merge $P_{18}$ with itself and obtain an infinite sequence of $2$-simplicial $2$-simple $4$-polytopes, each having at least one simplex facet and one simple vertex. By Corollary \ref{prop: f-vector}, any polytope obtained by merging $k\geq 1$ copies of $P_{18}$ will have $5+13k$ vertices and $g^{\mathrm{toric}}_2=2k$. Other families of $2$-simplicial $2$-simple $4$-polytopes where the $k$th polytope has $g_2^{\mathrm{toric}}=2k$ (but $f_0=10+4k$) were constructed in \cite[Corollary 4.2]{PafZieg}.

To close this section, we propose the following problem. 
\begin{question}
	Is there a sequence of $2$-simplicial $2$-simple $4$-polytopes that approximate the unit ball?
\end{question}
\noindent In light of \cite[Theorem 3.2]{ANS}, it is natural to conjecture that if such a sequence of $4$-polytopes 
$\{Q_i\}$ exists, then $\lim_{i\to \infty} g_2^{\mathrm{toric}}(Q_i)=\infty$.

\subsection{Many $(d-2)$-simplicial $2$-simple $d$-polytopes}
In this section we construct a $d$-dimensional analog of $P_9$ for all $d\geq 4$. We then use this polytope along with Corollary \ref{cor: generate infinite family} to show that there are $2^{\Omega(N)}$ combinatorial types of $(d-2)$-simplicial $2$-simple $d$-polytopes with at most $N$ vertices and an additional property that each of these polytopes has a simplex facet and a simple vertex.  

As in Section 6.1, the $d$- and $(d-1)$-dimensional cross-polytopes are used  frequently, and we abbreviate them as CP. To start, we introduce the notion of a pseudo-regular CP and prove some of its properties. Let $\mathbf{0}$ denote the origin of $\R^{d-1}$.

\begin{definition}\label{def: associated points}
Let $G\subset \R^{d-1}$ be a regular $(d-1)$-simplex centered at the origin, let $G^*\subset \R^{d-1}$ be the dual of $G$, and let $\alpha > 0$ be a real number. Assume also that $G$ is contained in the interior of $\alpha G^*$, denoted $\inter(\alpha G^*)$.
	A $d$-cross-polytope is called \emph{pseudo-regular} if it is congruent to $\conv (G\times \{1\}  \cup   \alpha G^*\times \{-1\}).$
\end{definition}

Consider a regular simplex $G=[\mu_1, \dots, \mu_d]\subset \R^{d-1}$ centered at the origin and let $\alpha>0$. Then $\alpha G^*=[\mu'_1,\dots, \mu'_d]\subset \R^{d-1}$ is also a regular simplex centered at the origin. We label the vertices in such a way that $\mu'_i$ is an outer normal vector to the facet $[\mu_1, \dots, \widehat{\mu_i}, \dots, \mu_d]$ of $G$. By our assumptions on $G$, this is equivalent to labeling the vertices so that for all $i\in[d]$, $\mu'_i=a\sum_{j\in[d]\backslash i}\mu_j=-a\mu_i$, where $a$ is a positive scalar independent of $i$.

For a nonempty subset $I$ of $[d]$, let $G_I=[\mu_i : i\in I]$ be a face of $G$ and $G'_I=[\mu'_i : i\in I]$ be a face of $\alpha G^*$; let $\beta_I=\frac{1}{|I|} \sum_{i\in I}\mu_i$ be the barycenter of $G_I$ and $\beta'_I=\frac{1}{|I|} \sum_{i\in I}\mu'_i$ be the barycenter of $G'_I$. Since for all $i\in[d]$, $\mu'_i=a\sum_{j\in[d]\backslash i}\mu_j=-a\mu_i$, it follows that for any proper subset $I$ of $[d]$, $\sum_{i\in I}\mu_i=-\frac{1}{a}\sum_{i\in I}\mu'_i=\frac{1}{a}\sum_{j\in [d]\backslash I}\mu'_j$. Thus, $\beta_I$ is a positive multiple of  $\beta'_{[d]\backslash I}$, and so the ray from $\mathbf{0}$ and through $\beta_{I}$ coincides with the ray from $\mathbf{0}$ and through $\beta'_{[d]\backslash I}$.
Furthermore, since $G$ is regular,  the distance from $\mathbf{0}$ to $\beta_I$ is the same for all $k$-subsets $I$ of $[d]$; we denote it by $\rho_k$ and note that $\rho_1>\dots>\rho_{d-1}$. Similarly, for all $k$-subsets $J$ of $[d]$, the distance from $\mathbf{0}$ to $\beta'_{J}$ is the same number $\rho'_{k}$, where $\rho'_1>\dots>\rho'_{d-1}$. Finally, since $G\subset \inter(\alpha G^*)$, $\rho'_{d-1}>\rho_1$. To summarize,
\begin{equation} \label{eq: rhos} \rho'_1>\dots>\rho'_{d-1}>\rho_1>\dots>\rho_{d-1}.\end{equation}

Consider the $d$-cross-polytope $\CP=\conv(G\times\{1\}  \cup  \alpha G^*\times \{-1\})$. We label the vertices of $\CP$ by $u_j=(\mu_j, 1)$ and $u'_j=(\mu'_j, -1)$ (for $j=1,\ldots,d$), so that $G\times \{1\}=[u_1, \dots, u_d]$ and $\alpha G^*\times \{-1\}=[u'_1, \dots, u'_d]$. For a subset $I$ of $[d]$, we denote the barycenter of $G_I\times \{1\}$ by $b_I$ and the barycenter of and $G'_I\times \{-1\}$ by $b'_{I}$.  Finally, we let $H_I$ denote the hyperplane in $\R^d$ determined by the following set of $d$ points:  $\{u_i : i\in I\}\cup\{u'_j : j\in [d]\backslash I\}$.

\begin{lemma}\label{lm: points of intersection}
	Let $0\leq k\leq d$. Then all hyperplanes $H_I$, where $I\subseteq [d], |I|=k$, intersect the $x_d$-axis at the same point. When $0<k<d$, the $d$th coordinate of this point is $>1$.
\end{lemma}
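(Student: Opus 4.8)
```latex
\textbf{Proof proposal.} The plan is to exploit the symmetry of the configuration together with the barycenter computations that were set up just before the statement. First I would observe that the map sending $u_j\mapsto u'_j$ (equivalently, the orthogonal transformation of $\R^{d-1}$ carrying $G$ to a positive multiple of $G^*$, combined with the reflection $x_d\mapsto -x_d$) realizes a symmetry of the labeled data: the group $W$ of permutations of $[d]$ acts on $\R^{d-1}$ as the symmetry group of the regular simplex $G$, fixing the origin, and this lifts to an action on $\R^d$ fixing the $x_d$-axis pointwise. For a fixed $k$ with $0\le k\le d$, this action permutes the hyperplanes $\{H_I : |I|=k\}$ transitively. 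Since each element of $W$ fixes the $x_d$-axis pointwise and maps hyperplanes to hyperplanes, it carries the intersection point $H_I\cap(x_d\text{-axis})$ to $H_{I'}\cap(x_d\text{-axis})$ for the corresponding $I'$. Any two such intersection points therefore lie in the same $W$-orbit on the $x_d$-axis; but the only $W$-fixed behavior forces them to be equal, because $W$ acts on the (one-dimensional) $x_d$-axis trivially. Hence all the $H_I$ with $|I|=k$ meet the $x_d$-axis at one common point; call it $p_k=(0,\ldots,0,t_k)$.

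Next I would compute $t_k$ explicitly to verify the inequality $t_k>1$ for $0<k<d$. The cases $k=0$ and $k=d$ are degenerate: there $H_\emptyset$ is the hyperplane through all $u'_j$, namely $x_d=-1$, and $H_{[d]}$ is $x_d=1$, so the statement about ``$>1$'' is vacuous there. For $0<k<d$, the hyperplane $H_I$ passes through the $k$ points $u_i=(\mu_i,1)$, $i\in I$, and the $d-k$ points $u'_j=(\mu'_j,-1)$, $j\notin I$. By the symmetry argument the intersection with the $x_d$-axis depends only on $k$, so I may compute it for one convenient $I$, or better, argue as follows: the barycenter $b_I$ of $\{u_i:i\in I\}$ equals $(\beta_I,1)$ and lies on the ray from $\mathbf 0$ through $\beta_I$, while the barycenter $b'_{[d]\setminus I}$ of $\{u'_j:j\notin I\}$ equals $(\beta'_{[d]\setminus I},-1)$; by the displayed computation preceding the lemma, $\beta_I$ and $\beta'_{[d]\setminus I}$ are \emph{positive multiples of each other}, so the three points $b_I$, $b'_{[d]\setminus I}$, and the $x_d$-axis are coplanar with that axis — indeed the $2$-plane spanned by $\mathbf 0$ (at height $0$) and the common ray through $\beta_I$, lifted to $\R^d$, contains both barycenters and the $x_d$-axis. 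Since $H_I$ is spanned (affinely) by the two barycenters $b_I,b'_{[d]\setminus I}$ together with lower-dimensional data orthogonal to this plane, the intersection $H_I\cap(x_d\text{-axis})$ is exactly the intersection, within that $2$-plane, of the line through $b_I$ and $b'_{[d]\setminus I}$ with the $x_d$-axis. Parametrizing that line and using $\lVert\beta_I\rVert=\rho_k$ at height $1$ and $\lVert\beta'_{[d]\setminus I}\rVert=\rho'_{d-k}$ at height $-1$, a one-line similar-triangles computation gives
\[
t_k=\frac{\rho'_{d-k}+\rho_k}{\rho'_{d-k}-\rho_k}.
\]
Then $t_k>1$ is equivalent to $\rho_k>0$, which holds since $0<k<d$ (so $\beta_I\neq\mathbf 0$), using also $\rho'_{d-k}>\rho_k>0$ from \eqref{eq: rhos} to guarantee the denominator is positive and the point is well-defined.

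The main obstacle I anticipate is making the ``coplanarity/reduction to a $2$-plane'' step fully rigorous: one must check that $H_I$, which has dimension $d-1$, really does intersect the $x_d$-axis (rather than being parallel to it or containing it) and that the intersection is captured by the line through the two barycenters. The cleanest way is probably to verify directly that the $x_d$-axis is not contained in $H_I$ (e.g.\ the point $(0,\ldots,0,1)=$ the would-be apex does not lie in $H_I$ when $0<k<d$, which can be read off from the explicit equation of $H_I$, or from the fact that $u$ is beyond certain faces) and that the affine hull of $\{b_I,b'_{[d]\setminus I}\}$ together with the origin-at-height-$0$ point lies in $H_I$ — the last because $b_I$ is an affine combination of the $u_i$ ($i\in I$) and $b'_{[d]\setminus I}$ is an affine combination of the $u'_j$ ($j\notin I$), both of which lie in $H_I$, so the whole segment $[b_I,b'_{[d]\setminus I}]\subset H_I$ and its affine extension meets the axis at $p_k$. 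With that in hand, the formula for $t_k$ and the inequality $t_k>1$ follow from \eqref{eq: rhos} as above.
```
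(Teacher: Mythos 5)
Your proof is correct and, at its core, identical to the paper's: both pass to the line through the barycenters $b_I$ and $b'_{[d]\setminus I}$, which lies in $H_I$ and in a $2$-plane containing the $x_d$-axis (because $\beta_I$ is a positive multiple of $\beta'_{[d]\setminus I}$), and both extract the common value $\frac{\rho'_{d-k}+\rho_k}{\rho'_{d-k}-\rho_k}>1$ by the same similar-triangles computation using \eqref{eq: rhos}. Your opening symmetry argument (the $S_d$-action fixing the $x_d$-axis pointwise and permuting $\{H_I:|I|=k\}$ transitively) is a valid alternative justification that the intersection point depends only on $|I|$, but it is subsumed by the explicit formula, which is exactly how the paper establishes that independence.
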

\begin{proof}
	First note that $H_{[d]}$ and $H_\emptyset$ intersect the $x_d$-axis at $\mathbf{e}_d:=(0, \dots, 0, 1)$ and $-\mathbf{e}_d$, respectively. Now let $I$ be any $k$-subset of $[d]$, where $1\leq k\leq d-1$. Consider the points $b_{I}$ and $b'_{[d]\backslash I}$. Both of them lie in $H_I$; hence, so does the line $\ell=\aff(b_I, b'_{[d]\backslash I})$. 
	
	We claim that $\ell$ intersects the $x_d$-axis. Consequently, $$H_I\cap x_d\mbox{-axis} = \ell \cap x_d\mbox{-axis}.$$ 
	To prove the claim, consider the lines  $\aff(\mathbf{e}_d, b_I)$ and $\aff(-\mathbf{e}_d, b'_{[d]\backslash I})$. By discussion following Definition \ref{def: associated points}, these lines are parallel, and thus determine a $2$-dimensional plane $\mathcal{L}$. For the rest of the proof, we work in this plane. It contains $\ell$ and the $x_d$-axis. Also, since, $\beta_I$ is a positive multiple of $\beta'_{[d]\backslash I}$,  the points $b_I$ and $b'_{[d]\backslash I}$ lie on the same side of the $x_d$-axis in $\mathcal{L}$. Finally, since the distance from $b_I$ to the $x_d$-axis is $\rho_k$, the distance from $b'_{[d]\backslash I}$ to the $x_d$-axis is $\rho'_{d-k}$, and $\rho'_{d-k}>\rho_k$, it follows that $\ell$ and the $x_d$-axis are not parallel. Hence they intersect and the point of intersection, which we denote by $a_I=(0,\dots,0, c_I)$, satisfies $c_I>1$. This proves the claim.
	
	To complete the proof of the lemma, it remains to show that $c_I$ depends only on $|I|=k$. Indeed, consider triangles $[a_I, \mathbf{e}_d, b_I]$ and $[a_I, -\mathbf{e}_d, b'_{[d]\backslash I}]$. They are similar; hence, 
	$$\frac{c_I-1}{\rho_k}=\frac{\dist(a_I, \mathbf{e}_d)}{\dist(\mathbf{e}_d, b_I)}=\frac{\dist(a_I, -\mathbf{e}_d)}{\dist(-\mathbf{e}_d, b'_{[d]\backslash I})}=\frac{c_I+1}{\rho'_{d-k}}.$$
	Solving this equation yields $c_I=\frac{\rho'_{d-k}+\rho_k}{\rho'_{d-k}-\rho_k}$. The result follows.
\end{proof}

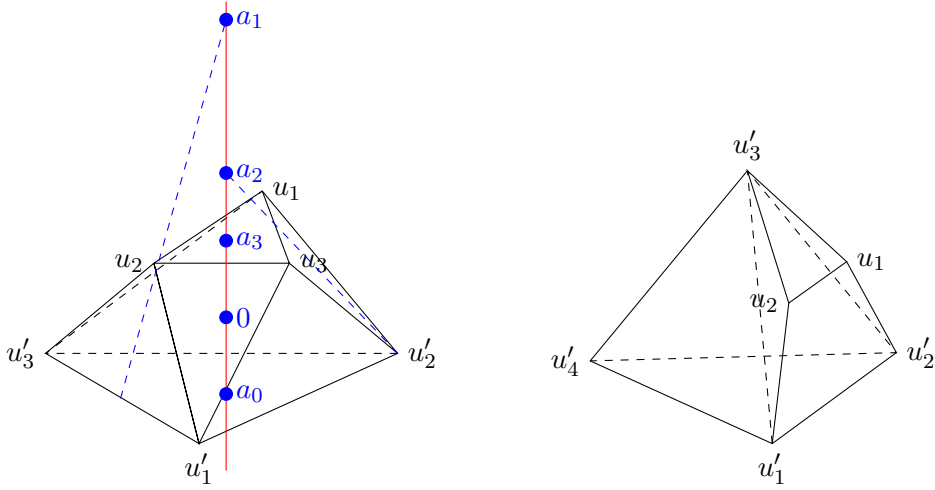
\begin{figure}
	\centering
	\begin{tikzpicture}[scale=1.2]
		\draw (0,0)node[below]{$u'_1$}--(2.2,1)node[right]{$u'_2$}--(1,2)node[right]{$u_3$}--(0,0)--(-0.5, 2)node[left]{$u_2$}--(0,0)--(-1.7, 1)node[left]{$u'_3$}--(-0.5, 2)--(0.7, 2.8)node[right]{$u_1$}--(1,2)--(-0.5, 2);
		\draw (0.7, 2.8)--(2.2,1);
		\draw[dashed] (2.2,1)--(-1.7, 1)--(0.7,2.8);
		\draw[red] (0.3,-0.3)--(0.3, 4.9);
		\draw[blue, dashed] (2.2,1)--(0.3, 3);
		\draw[blue, dashed] (-0.87, 0.5)--(0.3, 4.7);
		\filldraw[blue] (0.3,2.25) circle (2pt) node[right]{$a_3$};
		\filldraw[blue] (0.3,3) circle (2pt) node[right]{$a_2$};
		\filldraw[blue] (0.3,4.7) circle (2pt) node[right]{$a_1$};
		\filldraw[blue] (0.3,1.4) circle (2pt) node[right]{$0$};
		\filldraw[blue] (0.3,0.55) circle (2pt) node[right]{$a_0$};
	\end{tikzpicture}
	\hspace{1cm}
	\begin{tikzpicture}[scale=1.1]
		\draw (0,0)node[below]{$u'_1$}--(0.2, 1.7)node[left]{$u_2$}--(-0.3,3.3)node[above]{$u'_3$}--(0.9, 2.2)node[right]{$u_1$}--(0.2, 1.7);
		\draw (0.9,2.2)--(1.5, 1.1)node[right]{$u'_2$}--(0,0)--(-2.2,1)node[left]{$u'_4$}--(-0.3, 3.3);
		\draw[dashed] (0,0)--(-0.3, 3.3)--(1.5, 1.1)--(-2.2, 1);
	\end{tikzpicture}
	\caption{Left: a pseudo-regular $\CP$ of dimension $3$ and the points $\{a_0, \dots, a_3\}$. Right: The polytope $P^{3,1}$.}
	\label{Fig: pseudo-regular cross-polytope}
\end{figure}

Let $0\leq k\leq d$. In view of Lemma \ref{lm: points of intersection}, we denote by $a_k$ the point of intersection of $H_I$ and the $x_d$-axis, where $I$ is any subset of $[d]$ of size $k$, and by $c_k:=\frac{\rho'_{d-k}+\rho_k}{\rho'_{d-k}-\rho_k}$ the last coordinate of $a_k$; see Figure \ref{Fig: pseudo-regular cross-polytope} for an illustration in dimension $3$. 

\begin{corollary}\label{cor: position of intersections}
    The heights of points $a_1,\ldots,a_d$ satisfy $c_1> \cdots >c_{d-1}> c_d = 1$. In particular, if $q$ is a point on the $x_d$-axis that lies strictly between $a_{k-1}$ and $a_k$, then $q$ is beneath the facet $H_I=[u_i, u'_j: i\in I, j\in [d]\backslash I]$ of the $CP$ if $|I|\leq k-1$, and beyond the facet $H_I$ if $|I|\geq k$.
\end{corollary}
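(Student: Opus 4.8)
\textbf{Proof proposal for Corollary \ref{cor: position of intersections}.}

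The plan is to derive everything from the explicit formula $c_k=\frac{\rho'_{d-k}+\rho_k}{\rho'_{d-k}-\rho_k}$ established in Lemma \ref{lm: points of intersection}, together with the chain of inequalities \eqref{eq: rhos}. First I would prove the monotonicity $c_1>\cdots>c_{d-1}>c_d=1$. The value $c_d=1$ is immediate since $H_{[d]}$ is the facet $G\times\{1\}$, which meets the $x_d$-axis at $\mathbf e_d$; alternatively, $\rho_d$ (the distance from $\mathbf 0$ to the barycenter of a $d$-element ``face'', i.e.\ the center of $G$) is $0$, so the formula gives $c_d=\rho_0'/\rho_0'=1$ (with the convention $\rho'_0$ interpreted as the circumradius-type quantity). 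For the strict decrease, fix the function $t\mapsto \frac{s+t}{s-t}=-1+\frac{2s}{s-t}$ on $0\le t<s$; it is strictly increasing in $t$ and, for fixed $t$, strictly decreasing in $s$. As $k$ increases from $1$ to $d-1$, the numerator-type quantity $\rho_k$ strictly decreases (by \eqref{eq: rhos}) while $\rho'_{d-k}$ strictly increases (again by \eqref{eq: rhos}, since $d-k$ decreases); hence $c_k$ strictly decreases. Since $\rho'_{d-k}>\rho_k$ throughout (the last inequality block of \eqref{eq: rhos} guarantees $\rho'_{d-1}>\rho_1\ge\rho_k$, and more generally $\rho'_{d-k}>\rho_k$), each $c_k$ is well-defined and, as already shown in Lemma \ref{lm: points of intersection}, satisfies $c_k>1$ for $1\le k\le d-1$.

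Next I would translate the ordering of the heights $c_k$ into the beneath/beyond statement. Recall that a point $q$ is \emph{beneath} a facet if it lies on the same (closed) side of the supporting hyperplane as the polytope, and \emph{beyond} it if strictly on the other side; here the relevant hyperplanes are the $H_I$, and $q=(0,\dots,0,t)$ ranges over the $x_d$-axis. The key geometric input is that the CP is centrally symmetric and convex with $\mathbf 0$ in its interior, so the $x_d$-axis meets $\partial(\CP)$ in exactly two points, one in the upper cap (where all the $u_i$'s live, contributed by the face $G\times\{1\}$) and one in the lower cap (contributed by $\alpha G^*\times\{-1\}$). More precisely, I would observe that along the $x_d$-axis, as $t$ increases, the point $q$ crosses the hyperplane $H_I$ exactly at height $c_{|I|}$, and on one side of $c_{|I|}$ lies the interior direction and on the other the exterior direction. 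Because $c_{|I|}$ depends only on $k:=|I|$ and the $c_k$ are strictly ordered $c_d=1<c_{d-1}<\cdots<c_1$, the intervals $(a_{k-1},a_k)$ (with the convention $a_0=-\mathbf e_d$, or rather $c_0=-1$ coming from $H_\emptyset$) partition the relevant segment of the axis; for $q$ strictly between $a_{k-1}$ and $a_k$ one checks the sign of $q$ against $H_I$: if $|I|\le k-1$ then $c_{|I|}\ge c_{k-1}>t$, putting $q$ beneath $H_I$, while if $|I|\ge k$ then $c_{|I|}\le c_k<t$, putting $q$ beyond $H_I$. To pin down which side ``beneath'' corresponds to (i.e.\ that $t<c_{|I|}$ is the beneath side rather than the beyond side), I would test a single reference point, e.g.\ the origin $q=\mathbf 0$ with $t=0<1=c_d\le c_{|I|}$: the origin is in the interior of the CP, hence beneath every facet, which fixes the orientation uniformly for all $I$ since the $H_I$'s all separate $\mathbf 0$ from their respective facet.

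The main obstacle I anticipate is purely bookkeeping: being careful about the orientation/sign conventions so that ``$t$ below $c_{|I|}$'' is consistently the beneath side for every $H_I$ simultaneously, and handling the boundary indices $k=0$ (the facet $\alpha G^*\times\{-1\}$, $c_0=-1$) and $k=d$ (the facet $G\times\{1\}$, $c_d=1$) so that the partition of the axis segment into the open intervals $(a_{k-1},a_k)$ is exhaustive and the claim reads correctly at the extremes. None of this requires new computation beyond what Lemma \ref{lm: points of intersection} and \eqref{eq: rhos} already give; the reference-point argument at $\mathbf 0$ (or at $\pm\mathbf e_d$) removes the sign ambiguity in one stroke, and the strict monotonicity of $c_k$ does the rest.
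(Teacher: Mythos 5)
Your argument is correct and follows essentially the same route as the paper: the paper likewise derives the chain $c_1>\cdots>c_{d-1}>c_d=1$ directly from the formula $c_k=\frac{\rho'_{d-k}+\rho_k}{\rho'_{d-k}-\rho_k}$ of Lemma \ref{lm: points of intersection} together with \eqref{eq: rhos} (it computes the difference $c_k-c_{k-1}$ explicitly rather than invoking monotonicity of $(s,t)\mapsto\frac{s+t}{s-t}$ in each variable, which is a cosmetic difference), and it treats the beneath/beyond clause as an immediate consequence of this ordering, which you spell out correctly via the reference point $\mathbf{0}$. One small caution on your sign bookkeeping: the uniform rule ``$t<c_{|I|}$ iff beneath'' breaks for $I=\emptyset$ (there the beneath side is $t>c_0=-1$), though this does not affect the conclusion, since a point $q$ at height $t>1$ is still beneath $H_\emptyset$, consistent with the case $|I|=0\le k-1$ of the statement.
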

\begin{proof}
By equation \eqref{eq: rhos}, for all $1\leq k\leq d-1$, $\rho'_{d-k}-\rho_k>0$. Hence $c_k=\frac{\rho'_{d-k}+\rho_k}{\rho'_{d-k}-\rho_k}>1=c_d$. Furthermore, for $2\leq k\leq d-1$,\begin{equation*}
		\begin{split}
			& c_k-c_{k-1}=\frac{\rho'_{d-k}+\rho_k}{\rho'_{d-k}-\rho_k}-\frac{\rho'_{d-k+1}+\rho_{k-1}}{\rho'_{d-k+1}-\rho_{k-1}}\\
			=&2\left(\frac{\rho_k}{\rho'_{d-k}-\rho_k} -\frac{\rho_{k-1}}{\rho'_{d-k+1}-\rho_{k-1}}\right)\\
			=&2\left(\frac{1}{\frac{\rho'_{d-k}}{\rho_k}-1}-\frac{1}{\frac{\rho'_{d-k+1}}{\rho_{k-1}}-1} \right)<0, 
		\end{split}
	\end{equation*}
	where the last step follows from the fact that $\rho'_{d-k}>\rho'_{d-k+1}> \rho_{k-1}>\rho_k$; see eq.~\eqref{eq: rhos}.
\end{proof}

\begin{definition} \label{def:a_i}
	Let $\CP=\conv(G\times \{1\} \cup \alpha G^*\times \{-1\})$ be a pseudo-regular $d$-cross-polytope. 
	The set $\{a_k=\cap_{I\subset [d], |I|=k}H_I: 1\leq k\leq d\}$ is called the {\em sequence of points associated with $\CP$}.
\end{definition}

Our construction of a $(d-2)$-simplicial $2$-simple polytope starts with a certain $d$-polytope $P^{d,1}$ described in Definition \ref{def: the first polytope P_1} and proceeds by recursively adding to $P^{d,1}$ a total of $d-3$ additional vertices; see Figure \ref{Fig: pseudo-regular cross-polytope} for an illustration of $P^{3,1}$. As we will see below, one of the facets of $P^{d,1}$ is a pseudo-regular CP (of dimension $d-1$). By a slight abuse of notation, we continue to label the vertices of this facet by $u_1, \dots, u_{d-1}, u'_1, \dots, u'_{d-1}$.

\begin{definition}\label{def: the first polytope P_1}
	Let $\Sigma=[u'_1, ..., u'_{d+1}]$ be a regular $d$-simplex. Choose an arbitrary $0<\epsilon\ll \dist(u'_1, u'_2)$. For $1 \leq i \leq d-1$, let $p_i$ be the barycenter of the $(d-2)$-face $\Sigma\backslash u'_iu'_{d+1}$, and let $u_i := p_i + \epsilon(p_i-u'_{d+1})$. We define $P^{d,1}$ as $\conv(u'_1, \dots, u'_{d+1}, u_1, \dots, u_{d-1})$.
\end{definition}
Since $p_i$ is the barycenter of the $(d-2)$-face $\Sigma\backslash u'_iu'_{d+1}$, it follows that $[p_1, \dots, p_{d-1}]$ is a regular $(d-2)$-simplex and $[p_1, \dots, p_{d-1}, u'_1, \dots, u'_{d-1}]$ is a pseudo-regular $(d-1)$-cross-polytope. By our choice of $u_i$, $[u_1, \dots, u_{d-1}]$ is a regular $(d-2)$-simplex obtained from $[p_1, \dots, p_{d-1}]$ by dilation with factor $(1+\epsilon)$ (where $\epsilon$ is small) followed by translation in the direction perpendicular to  $\aff(p_1,\dots,p_{d-1}, u'_1,\dots,u'_{d-1})=\aff(\Sigma\backslash u'_{d+1})$. In particular, $\aff(u_1, \dots, u_{d-1})$ is parallel to $\aff(u'_1, \dots, u'_{d-1})$ and $\CP:=[u_1, \dots, u_{d-1}, u'_1, \dots, u'_{d-1}]$ is also a pseudo-regular $(d-1)$-cross-polytope.

This discussion shows that the polytope $P^{d,1}$ is the union of the simplex $\Sigma$ and the pyramid with apex $u'_d$ over the cross-polytope $\CP$ (glued along the simplex $[u'_1,\ldots,u'_{d}]$). Furthermore, for each $1\leq i\leq d-1$, the points $\{u_i, u'_1,\ldots,\widehat{u'_i},\ldots, u'_d, u'_{d+1}\}$ lie in the same hyperplane, and, in this hyperplane, the sets $\conv(u_i, u'_{d+1})$ and $\conv(u'_1,\ldots,\widehat{u'_i},\ldots u'_{d})$ intersect in their relative interiors. For $1\leq k\leq d-1$, let $\mathcal{H}_{k}$ be the set of facets $H$ of $\CP$ with $|H\cap \{u_1, \dots, u_{d-1}\}|=k$. (Each such $H$ is a $(d-2)$-face of $P^{d,1}$.) Also, let $H^+ :=H\cap [u_1, \dots, u_{d-1}]$ and $H^-:=H\cap [u'_1, \dots, u'_{d-1}]$. Let $v_0:=u'_{d+1}$ and $v_1:=u'_d$. It follows that $P^{d,1}$ has the following facets:

\begin{enumerate}
	\item The simplex $\Sigma\backslash u'_d$ and the pseudo-regular cross-polytope $\CP$. 
	\item $d-1$ bipyramids of the form $\conv(H\cup \{v_0,v_1\})$, where $H\in\mathcal{H}_1$; the boundary complex of such facet is $\partial(\overline{V(H^+)\cup v_0})*\partial(\overline{V(H^-)\cup v_1})$.
	\item $2^{d-1}-d$ simplex facets of the form $\conv(H \cup v_1)$, where $H\in \cup_{2\leq k\leq d-1} \mathcal{H}_k$.
\end{enumerate}
\noindent In particular, $\CP$ is adjacent to all other facets of $P^{d,1}$. 

 Since $\CP$ is pseudo-regular, by Lemma \ref{lm: points of intersection}, there is a sequence of points associated with $\CP$ (lying in $\aff(\CP)$): $a_i=\cap_{F\in\mathcal{H}_i} \aff(F)$, $1\leq i\leq d-1$; see Definition \ref{def:a_i}. The points $\{a_i:1\leq i\leq d-1\}$ all lie on the line through the barycenters $b_{[d-1]}$ of $[u_1, \dots, u_{d-1}]$ and $b'_{[d-1]}$ of $[u'_1, \dots, u'_{d-1}]$, and, according to Corollary \ref{cor: position of intersections}, they appear on this line in the order $a_1,\ldots,a_{d-2}, a_{d-1}$, with $a_{d-2}$ closest to $a_{d-1}=b_{[d-1]}$ and $a_1$ farthest from $b_{[d-1]}$.

We are now ready for the main definition of this section:

\begin{definition}\label{def: (d-2)-simplicial 2-simple d-polytope}
    Consider the sequence of points $\{a_i: 1\leq i\leq d-2\}$ associated with the facet $\CP=[u_1', \dots, u'_{d-1}, u_1, \dots, u_{d-1}]$ of $P^{d,1}$. Let $v_1=u'_d$. Inductively, for $2\leq i\leq d-2$, choose a point $v_{i}$ in the relative interior of the line segment $[a_i, v_{i-1}]$ and let $P^{d, i}=\conv(P^{d, i-1}\cup v_i)$. Finally, let $P^d=P^{d,d-2}$.
\end{definition}

The process of adding vertices similar to the one described in Definition \ref{def: (d-2)-simplicial 2-simple d-polytope} is illustrated in Figure \ref{fig:adding vertices}, where the vertices are added to the pyramid over a hexagon. (Unfortunately, Definition \ref{def: (d-2)-simplicial 2-simple d-polytope} itself is non-vacuous only when $d\geq 4$, and as such is hard to illustrate.)

Our next goal is to prove that $P^d$ is the promised high-dimensional analog of the $4$-polytope $P_9$; see Theorem \ref{thm: (d-2)-simplicial $2$-simple construction}. This requires describing the facets of $P^d$. We do so by induction, showing that for $2\leq k\leq d-2$,  the set of facets of $P^{d,k}$ is obtained from that of $P^{d,k-1}$  as follows.

\begin{enumerate}
	\item The facets $\conv(H \cup v_{k})$, where $H\in \cup_{k+1\leq i\leq d-1} \mathcal{H}_i$. These facets replace the facets $\conv(H \cup v_{k-1})$ of $P^{d,k-1}$.
	\item $\binom{d-1}{k}$ facets $\conv(H\cup \{v_{k-1},v_k\})$, where $H\in\mathcal{H}_k$. Their boundary complexes are of the form $$\partial(\overline{V(H^+)\cup v_{k-1}})*\partial(\overline{V(H^-)\cup v_{k}}).$$  These facets replace the facets $\conv(H \cup v_{k-1})$ of $P^{d,k-1}$.
	\item The rest of the facets of $P^{d,k-1}$ not mentioned above.
\end{enumerate}

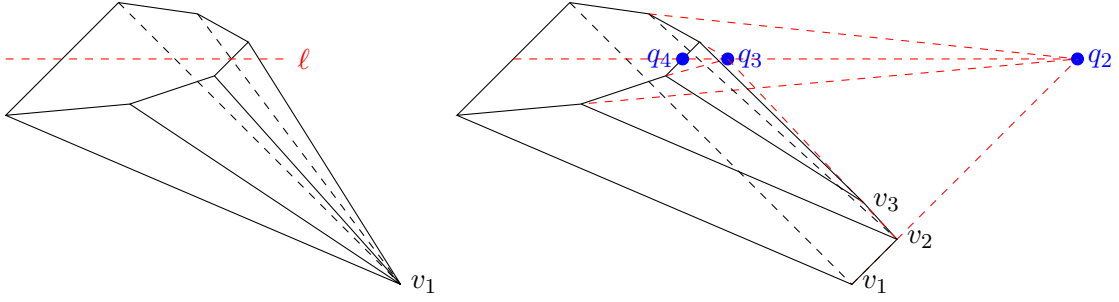
\begin{figure}
	\centering
	\begin{tikzpicture}[scale=1.5]
		\draw (-0.5, -0.5)--(0.6, -0.4)--(1.35,-0.15)--(1.65, 0.15)--(1.2, 0.4)--(0.5, 0.5)--(-0.5, -0.5)--(3, -2)node[right]{$v_1$}--(0.6, -0.4);
		\draw (1.35, -0.15)--(3,-2)--(1.65, 0.15);
		\draw[dashed] (1.2, 0.4)--(3,-2)--(0.5, 0.5);
		\draw[red, dashed] (-0.5, 0)--(2, 0)node[right]{$\ell$};
	\end{tikzpicture}
	\begin{tikzpicture}[scale=1.5]
		\draw (0.5, 0.5)--(-0.5, -0.5)--(0.6, -0.4)--(1.35,-0.15)--(1.65, 0.15)--(1.2, 0.4)--(0.5, 0.5);
		\draw[dashed, red] (0,0)--(5, 0)--(0.6, -0.4);
		\draw[dashed, red] (1.35, -0.15)--(1.9, 0)--(1.65, 0.15);
		\draw[dashed, red] (1.2, 0.4)--(5, 0);
		\filldraw[blue] (1.5,0) circle (1.5pt) node[left]{$q_4$};
		\filldraw[blue] (1.9,0) circle (1.5pt) node[right]{$q_3$};
		\filldraw[blue] (5,0) circle (1.5pt) node[right]{$q_2$};
		\draw[dashed, red] (3, -2)--(5,0);
		\draw (0.6, -0.4)--(3.4, -1.6)node[right]{$v_2$}--(3, -2)node[right]{$v_1$}--(-0.5, -0.5);
		\draw (1.35, -0.15)--(3.1, -1.27)node[right]{$v_3$}--(3.4, -1.6);
		\draw [dashed, red] (1.9, 0)--(3.4, -1.6);
		\draw (1.65, 0.15)--(3.1, -1.27);
		\draw [dashed] (0.5, 0.5)--(3, -2);
		\draw[dashed] (1.2, 0.4)--(3.4, -1.6);
	\end{tikzpicture}
	\caption{Left: The pyramid over a hexagon $H$ symmetric about the line $\ell$. Right: A new $3$-polytope obtained by adding vertices $v_2$ and $v_3$, with $v_{i+1}$ in the interior of the line segment $[q_{i+1}, v_i]$; here $q_{i+1}$ is the intersection of affine spans of the appropriate symmetric edges of $H$.}
	\label{fig:adding vertices}
\end{figure}

\noindent In particular, it follows by induction that $\CP$ is a facet of $P^{d,k}$ and that it is adjacent to {\em all} other facets of $P^{d,k}$, and, furthermore, that the collection of facets in item 3 consists of $\Sigma\backslash u'_d$, $\CP$, and for each $1\leq i\leq k-1$ and  $H\in \mathcal{H}_i$, a facet that contains $H\cup v_i$.

The proof is based on:

{\bf Claim 1}: {\em For every $H\in \mathcal{H}_k$, $v_k\in \aff(H \cup v_{k-1})$.} This is because $a_k$ lies on the hyperplane $\aff(H)$, and $v_k\in [a_k, v_{k-1}]$. 

{\bf Claim 2}: {\em For $i>k$ and $H\in \mathcal{H}_i$, $v_k$ is beyond $\conv(H \cup v_{k-1})$.}
Indeed, by Corollary \ref{cor: position of intersections}, in $\aff(\CP)$, $a_k$ is beyond $H$. Hence in $\aff(\CP \cup v_{k-1})=\mathbb{R}^d$, the point $v_k\in \inter [a_k, v_{k-1}]$ is beyond $\conv(H\cup v_{k-1})$.

{\bf Claim 3}: {\em $v_k$ is beneath the rest of the facets of $P^{d,k-1}$.} First, as easily seen from the definition of sequences $\{a_j\}$ and $\{v_j\}$, $v_k$ is beneath both $\Sigma\backslash u'_d$ and $\CP$. Thus it only remains to show that if $G$ is a facet of $P^{d,k-1}$ that contains $H\cup v_i$ for some $i<k$ and $H\in \mathcal{H}_i$, then $v_k$ is beneath $G$. This follows from Corollary \ref{cor: position of intersections} along with another simple induction on $j$, where $i+1\leq j\leq k$. For the base case, by Corollary \ref{cor: position of intersections}, in $\aff(\CP)$, $a_{i+1}$ is beneath $H$. Hence, in $\aff(\CP \cup v_{i})=\R^d$, $a_{i+1}$ is beneath $G$. Since $v_{i+1}$ is in the interior of $[v_i, a_{i+1}]$, $v_{i+1}$ is also beneath $G$. The inductive step is very similar: by the inductive hypothesis, $v_j$ is beneath $G$ and by Corollary \ref{cor: position of intersections}, so is $a_{j+1}$; hence $v_{j+1}\in[v_j,a_{j+1}]$ is also beneath $G$. The claim follows.

The above three claims uniquely determine the facets of $P^{d, k}$. Claim 3 implies that the facets of $P^{d,k-1}$ from item 3 in the list are unaffected by adding $v_k$, and hence remain facets of $P^{d,k}$. 

Claim 1 implies that for every $H\in \mathcal{H}_k$, the facet $\conv(H\cup v_{k-1})$ of $P^{d, k-1}$ is replaced by a new facet $\conv(H\cup \{v_k, v_{k-1}\})$. Note that the barycenter $b_{H^+}$ of $H^+$ lies on the line segment connecting $a_k$ and the barycenter $b_{H^-}$ of $H^-$ (see the proof of Lemma \ref{lm: points of intersection}). Hence, if $v_k$ is an interior point of the line segment $[a_k, v_{k-1}]$, then $[b_{H^+}, v_{k-1}]$ and $[b_{H^-}, v_{k}]$ intersect at a point $p$. This implies that $\conv(H^+ \cup v_{k-1})\cap \conv(H^- \cup v_{k})=p$. Thus the boundary complex of $\conv(H \cup \{v_k, v_{k-1}\})$ must be $\partial(\overline{V(H^+)\cup v_{k-1}})*\partial (\overline{V(H^-)\cup v_{k}})$. These facets are exactly\footnote{To see this, we invite the reader to compute the link of $v_{k-1}v_k$ in the polytopal complex generated by these facets and check that it is a $(d-3)$-dimensional pseudomanifold (i.e., every ridge is in two facets). Thus it must coincide with the link of $v_{k-1}v_k$  in the boundary of $P^{d,k}$.} the facets of $P^{d, k}$ containing $v_{k-1}v_k$.

Finally, the rest of the facets of $P^{d, k}$ are those arising from $H\in\mathcal{H}_i$ for $i>k$. By Claim 2 and the previous paragraph, they must be of the form $\conv(H\cup v_k)$, replacing $\conv(H\cup v_{k-1})$ of $P^{d, k-1}$.

We thus obtain the following result (for convenience we let $v_{d-1}=v_{d-2}$):
\begin{lemma}\label{lm: list of facets}
	The polytope $P^d$ in Definition \ref{def: (d-2)-simplicial 2-simple d-polytope} has $3(d-1)$ vertices and $2^{d-1}+1$ facets. The vertex set of $P^d$ is $$\{u_1, \dots, u_{d-1}, u'_1, \dots, u'_{d-1}, u'_d=v_1, u'_{d+1}=v_0, v_2, \dots, v_{d-3}, v_{d-2}=v_{d-1}\}.$$ The set of facets of $P^d$ naturally splits into the following $d$ subfamilies:
	\begin{enumerate}
		\item $\mathcal{F}_0$ consists of  the simplex $[u'_1, \dots, u'_{d-1}, u'_{d+1}]$ and the cross-polytope $\CP$.
		\item For $1\leq k\leq d-1$, $\mathcal{F}_k$ consists of $\binom{d-1}{k}$ polytopes of dimension $d-1$ whose boundary complexes are of the form $\partial(\overline{V(H^+)\cup v_{k-1}})*\partial (\overline{V(H^-)\cup v_{k}})$, where $H\in \mathcal{H}_{k}$. In particular, $\mathcal{F}_{d-1}=\{[u_1, \dots, u_{d-1}, v_{d-2}]\}$.
	\end{enumerate}
\end{lemma}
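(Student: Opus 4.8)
The plan is to establish Lemma~\ref{lm: list of facets} by induction on $k$, tracking how the facet set of $P^{d,k}$ evolves as the vertex $v_k$ is added to $P^{d,k-1}$, exactly along the lines set up in the three Claims preceding the statement. The base case is $P^{d,1}$, whose facets were already enumerated right after Definition~\ref{def: the first polytope P_1}: the simplex $\Sigma\backslash u'_d=[u'_1,\dots,u'_{d-1},u'_{d+1}]$ and the pseudo-regular $\CP$ form $\mathcal{F}_0$; the $d-1$ bipyramids $\conv(H\cup\{v_0,v_1\})$ with $H\in\mathcal{H}_1$, whose boundary complexes are $\partial(\overline{V(H^+)\cup v_0})*\partial(\overline{V(H^-)\cup v_1})$, form $\mathcal{F}_1$; and the $2^{d-1}-d$ simplices $\conv(H\cup v_1)$ with $H\in\cup_{k\ge 2}\mathcal{H}_k$ account for the ``not yet subdivided'' part that will successively turn into $\mathcal{F}_2,\dots,\mathcal{F}_{d-1}$.

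For the inductive step, I would assume the displayed three-part description of $\facet(P^{d,k})$ in terms of $\facet(P^{d,k-1})$ (items (1)--(3) in the paragraph before Claim~1) and verify it using the standard fact (see \cite[Theorem 5.2.1]{Gru-book}) that adding a vertex $v_k$ beyond exactly a set of facets $\mathcal{B}$ and beneath the rest produces a polytope whose facets are (a) the facets of $P^{d,k-1}$ beneath $v_k$, plus (b) for each ridge $R$ lying in one beneath-facet and one beyond-facet, the new facet $\conv(R\cup v_k)$; facets of $\mathcal{B}$ disappear, and facets containing $v_k$ in their affine hull merge with $v_k$ into a single facet. Claim~3 identifies the beneath-facets — these are precisely the facets of $\mathcal{F}_0$ together with, for $i<k$, the facets containing $H\cup v_i$ for $H\in\mathcal{H}_i$ — so these pass unchanged to $P^{d,k}$ (item (3)). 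Claim~1 says each $\conv(H\cup v_{k-1})$ with $H\in\mathcal{H}_k$ has $v_k$ in its affine hull, so it absorbs $v_k$ and becomes $\conv(H\cup\{v_{k-1},v_k\})$; the computation in the paragraph after Claim~3, using that $b_{H^+}$ lies on segment $[a_k,b_{H^-}]$ and $v_k\in\inter[a_k,v_{k-1}]$, shows $\conv(H^+\cup v_{k-1})\cap\conv(H^-\cup v_k)$ is a single point, forcing the boundary complex to be $\partial(\overline{V(H^+)\cup v_{k-1}})*\partial(\overline{V(H^-)\cup v_k})$ — this yields $\mathcal{F}_k$ (item (2)). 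Claim~2 says $v_k$ is beyond $\conv(H\cup v_{k-1})$ for each $H\in\mathcal{H}_i$, $i>k$; such a facet is destroyed, but since $\CP$ is adjacent to all facets of $P^{d,k-1}$ and its subfacet structure is unchanged, the ridge $H=\conv(H\cup v_{k-1})\cap\CP$ survives and the new facet is $\conv(H\cup v_k)$, which is again a bipyramid-type polytope waiting to be subdivided at the next stage. Running this from $k=1$ to $k=d-2$ and recalling $v_{d-1}:=v_{d-2}$, one reads off that $\facet(P^d)=\mathcal{F}_0\sqcup\mathcal{F}_1\sqcup\cdots\sqcup\mathcal{F}_{d-1}$, with $|\mathcal{F}_k|=\binom{d-1}{k}$ for $1\le k\le d-1$, hence $f_{d-1}(P^d)=2+\sum_{k=1}^{d-1}\binom{d-1}{k}=2^{d-1}+1$; the vertex count $3(d-1)$ follows since $P^{d,1}$ has $2(d-1)+3$ vertices and each of $v_2,\dots,v_{d-2}$ adds one, while no vertex is lost (each $v_k$ is beyond some facet, never beneath all, and is not in the affine hull of $\CP$).

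The one point that needs genuine care — and which I expect to be the main obstacle — is the verification that at stage $k$ the ``cross-polytope link'' claim in the footnote actually holds, i.e.\ that the polytopal complex generated by the facets $\{\conv(H\cup\{v_{k-1},v_k\}):H\in\mathcal{H}_k\}$ closes up correctly around the edge $v_{k-1}v_k$ so that these are exactly the facets of $P^{d,k}$ through that edge. Concretely one must check that $\mathrm{link}_{v_{k-1}v_k}$ in this complex is a closed $(d-3)$-pseudomanifold: every $(d-4)$-face of some $H^+*H^-$ (with $H\in\mathcal{H}_k$) lies in exactly two such facets. This reduces to a combinatorial statement inside the boundary complex of the pseudo-regular $\CP=[u_1,\dots,u_{d-1},u'_1,\dots,u'_{d-1}]$: the facets in $\mathcal{H}_k$ are indexed by the $k$-subsets $S\subseteq[d-1]$ (taking $u_i,i\in S$ and $u'_j,j\notin S$), and one checks that the correspondence $H\mapsto \partial(\overline{V(H^+)\cup v_{k-1}})*\partial(\overline{V(H^-)\cup v_k})$ patches along facets of the form (drop a $u_i$, $i\in S$, or a $u'_j$, $j\notin S$) in exactly the pseudomanifold way. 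Everything else — the beneath/beyond bookkeeping — is routine given Corollary~\ref{cor: position of intersections} and the ordering \eqref{eq: rhos} of the radii, so the write-up should foreground this pseudomanifold check and keep the rest terse.
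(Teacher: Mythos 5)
Your proposal follows essentially the same route as the paper: the base case is the facet list of $P^{d,1}$, and the inductive step from $P^{d,k-1}$ to $P^{d,k}$ is the beneath/beyond analysis encapsulated in Claims 1--3, with the join structure of the $\mathcal{F}_k$-facets and the pseudomanifold check around the edge $v_{k-1}v_k$ handled exactly as in the paper's discussion (footnote included). The only blemish is an arithmetic slip: $P^{d,1}$ has $2d$ vertices, not $2(d-1)+3=2d+1$; with that corrected, the count $2d+(d-3)=3(d-1)$ comes out right.
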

\begin{theorem}\label{thm: (d-2)-simplicial $2$-simple construction}
	The $d$-polytope $P^d$ is $(d-2)$-simplicial and $2$-simple. It has two pairs of a simplex facet and a simple vertex not in that facet; they are  $([u_1,\ldots, u_{d-1},v_{d-2}], u'_{d+1})$ and $([u'_1, \dots, u'_{d-1}, u'_{d+1}], v_{d-2})$.
\end{theorem}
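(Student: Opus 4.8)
The plan is to read off everything from the explicit facet list in Lemma~\ref{lm: list of facets} and verify the three assertions directly.

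\textbf{$(d-2)$-simpliciality.} First I would show every $(d-2)$-face of $P^d$ is a simplex by checking that each facet in $\mathcal{F}_0,\mathcal{F}_1,\dots,\mathcal{F}_{d-1}$ is itself $(d-2)$-simplicial. The simplex $[u_1',\dots,u_{d-1}',u_{d+1}']$ is trivial. The cross-polytope $\CP$ of dimension $d-1$ is $(d-2)$-simplicial (every proper face of a cross-polytope is a simplex). For a facet in $\mathcal{F}_k$ with $1\le k\le d-1$, its boundary complex is the join $\partial(\overline{V(H^+)\cup v_{k-1}})*\partial(\overline{V(H^-)\cup v_{k}})$ where $H^+$ has $k$ vertices and $H^-$ has $d-1-k$ vertices, so the two joined simplices have dimensions $k$ and $d-1-k$; a join of boundary complexes of simplices is the boundary complex of a simplex (here the $(d-1)$-simplex on $d+1$ vertices), which is $(d-2)$-simplicial. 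Since every $(d-2)$-face of $P^d$ lies in some facet, it is a simplex, proving $P^d$ is $(d-2)$-simplicial.

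\textbf{$2$-simplicity.} By Lemma~\ref{lm: f-vector criterion} with $i=2$, it suffices to prove $3f_{d-2}(P^d)=3f_{d-3}(P^d)$, i.e.\ $f_{d-2}=f_{d-3}$, or equivalently (by the argument in that lemma) that every $(d-3)$-face lies in exactly $3$ facets. I would do this by cases on which $(d-3)$-face $\tau$ we pick, using the facet list: $\tau$ is either a $(d-3)$-face of $\CP$, or of $\Sigma\setminus u_d'$, or of some $\mathcal F_k$-facet not already covered. For $\tau\subset\CP$: $\CP$ is $2$-simple as a cross-polytope, so $\tau$ lies in two facets of $\CP$, and since $\CP$ is adjacent to all other facets, each of those two ridges of $P^d$ through $\tau$ lies in one further facet; unravelling this (exactly as in Case~1 of the proof of Theorem~\ref{main theorem}) gives $3$ facets through $\tau$. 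For the remaining $\tau$ not contained in $\CP$, $\tau$ is a $(d-3)$-face of a facet $G\in\mathcal F_k$ whose boundary is a join of two simplex boundaries; a codimension-$2$ face of a $(d-1)$-simplex corresponds to a pair $(F_i\subset F_j)$ and one computes directly that it sits in exactly the two facets $G$ and $G'$ of $P^d$ that share it (these are the two facets one sees from the recursive description of how $\mathcal F_k$-facets meet $\mathcal F_{k\pm1}$-facets and the simplex facets), plus — when $\tau$ also lies on the common ridge with $\CP$ — one more; again the bookkeeping yields $3$. Alternatively, and probably cleaner for the write-up, one can instead compute $f_{d-2}(P^d)$ and $f_{d-3}(P^d)$ outright from Lemma~\ref{lm: list of facets} and the join structure and observe they are equal, then invoke Lemma~\ref{lm: f-vector criterion}; I would present whichever is shorter, but the counting version is safest.

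\textbf{The two simplex-facet/simple-vertex pairs.} From Lemma~\ref{lm: list of facets}, $\mathcal F_{d-1}=\{[u_1,\dots,u_{d-1},v_{d-2}]\}$ is a simplex facet, and $\mathcal F_0$ contains the simplex facet $[u_1',\dots,u_{d-1}',u_{d+1}']$. It remains to check the vertex $u_{d+1}'=v_0$ is simple and not on $[u_1,\dots,u_{d-1},v_{d-2}]$, and symmetrically that $v_{d-2}$ is simple and not on $[u_1',\dots,u_{d-1}',u_{d+1}']$. Non-containment is immediate from the vertex labels. For simplicity of $v_0$: trace through which facets in the list contain $v_0=u'_{d+1}$ — by construction $v_0$ appears only in $\Sigma\setminus u'_d$ (a simplex) and, looking at the recursive facet description, in exactly the $\mathcal F_1$ facets $\conv(H\cup\{v_0,v_1\})$ with $H\in\mathcal H_1$, of which there are $d-1$, giving $1+(d-1)=d$ facets, so $v_0$ is simple; the vertex figure $P^d/v_0$ is then a simplex. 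Wait — I should double check that $v_0$ does not appear in any $\mathcal F_k$ facet for $k\ge 2$ nor in $\CP$, which follows since those facets only involve $v_{k-1},v_k$ for $k\ge2$ and the cross-polytope vertices. For simplicity of $v_{d-2}=v_{d-1}$: by the same inspection, $v_{d-2}$ lies in $\mathcal F_{d-1}=\{[u_1,\dots,u_{d-1},v_{d-2}]\}$ and in the $\binom{d-1}{d-2}=d-1$ facets of $\mathcal F_{d-2}$, again $d$ facets total, so it is simple. The main obstacle is the careful bookkeeping in the $2$-simplicity argument — making sure no $(d-3)$-face is miscounted across the interface between $\CP$, the simplex facets, and the various $\mathcal F_k$; using the $f$-vector criterion (Lemma~\ref{lm: f-vector criterion}) together with an explicit face count from the join descriptions sidesteps most of this, so that is the route I would take.
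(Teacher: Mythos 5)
Your overall strategy --- read everything off the facet list of Lemma \ref{lm: list of facets} --- is the same as the paper's, and your treatment of $(d-2)$-simpliciality and of the two simplex-facet/simple-vertex pairs is essentially sound. The problems are in the part you yourself identify as the crux, the $2$-simplicity. The route you declare ``safest'' rests on a misquotation of Lemma \ref{lm: f-vector criterion}: with $i=2$ that lemma gives $(d-1)f_{d-2}(P^d)=3f_{d-3}(P^d)$, not $3f_{d-2}=3f_{d-3}$. The identity $f_{d-2}=f_{d-3}$ is the special case $d=4$ (Corollary \ref{cor:equality}); for $d>4$ the two counts are genuinely different, so ``compute both and observe they are equal'' would fail. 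One could still use the criterion by verifying $(d-1)f_{d-2}=3f_{d-3}$, but as written your target identity is wrong. The paper instead carries out the direct verification that you only sketch: a five-case enumeration of the $(d-3)$-faces by their vertex sets (faces containing $u'_{d+1}$; faces inside $[u'_1,\dots,u'_{d-1}]$; and faces of the forms $K\cup M'$, $K\cup M'\cup v_i$, and $K\cup M'\cup v_{i-1}v_i$), exhibiting the three containing facets explicitly in each case. Your sketch of that route is also internally off: you say a $(d-3)$-face not in $\CP$ lies in two facets ``plus, when it lies on the common ridge with $\CP$, one more,'' but every $(d-3)$-face of a $d$-polytope lies in at least three facets, so the faces of types $K\cup M'\cup v_i$ and $K\cup M'\cup v_{i-1}v_i$ (which are not contained in $\CP$) must each be located in three facets drawn from the families $\mathcal{F}_j$. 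That location is exactly the bookkeeping you defer, and it is the substance of the proof.

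A smaller slip: the join $\partial(\overline{V(H^+)\cup v_{k-1}})*\partial(\overline{V(H^-)\cup v_{k}})$ is the boundary complex of the free sum $\sigma_k\oplus\sigma_{d-1-k}$, a $(d-1)$-polytope on $d+1$ vertices that is \emph{not} a simplex --- if it were, every facet in $\mathcal{F}_k$ would be a simplex, contradicting Lemma \ref{lm: list of facets} (the $\mathcal{F}_1$ facets are bipyramids). Your conclusion for $(d-2)$-simpliciality survives because this join is still a simplicial complex, so every proper face of such a facet is a simplex; but the intermediate claim should be corrected.
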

\begin{proof}
Let $U=\{u_1, \dots, u_{d-1}\}$ and let $U'=\{u'_1, \dots, u'_{d-1}\}$. For $M=\{u_{i_1},\ldots,u_{i_k}\}\subseteq U$, we let $M':=\{u'_{i_1},\ldots,u'_{i_k}\}\subseteq U'$. Also, for brevity, we write $u$, $uv$, $uvw$ instead of $\{u\}$, $\{u,v\}$, and $\{u,v,w\}$.

The description of facets in Lemma \ref{lm: list of facets} guarantees that $P^d$ is $(d-2)$-simplicial. To show that $P^d$ is also $2$-simple, it suffices to check that every $(d-3)$-face $\tau$ of $P^d$ is contained in exactly three facets. 
By examining families $\mathcal{F}_i$, $0\leq i \leq d-1$, of Lemma \ref{lm: list of facets}, we see that there are the following possible cases:
	\begin{enumerate}
		\item $u'_{d+1}\in V(\tau)$. In this case, $V(\tau)\subset U'\cup u'_d u'_{d+1}$. If $u'_d$ is also in $\tau$, then $\tau$ is contained in three bipyramids from $\mathcal{F}_1$; otherwise, $\tau$ is contained in two bipyramids from $\mathcal{F}_1$ and the simplex $[u'_1, \ldots,u'_{d-1},u'_{d+1}]$ from $\mathcal{F}_0$.
		\item $V(\tau) \subset U'$. In this case, $\tau$ is contained in the cross-polytope and the simplex from $\mathcal{F}_0$, and one bipyramid from $\mathcal{F}_1$.
		\item $V(\tau)=K\cup M'$, where $K\sqcup M \sqcup u_\ell=U$ and $|K|=i$ for some $1\leq \ell\leq d-1$ and $1\leq i\leq d-2$. Then $\tau$ is a face of $\CP$ from $\mathcal{F}_0$, of $\partial(\overline{K\cup u_\ell v_{i}})*\partial(\overline{M'\cup v_{i+1}})$ from $\mathcal{F}_{i+1}$, and of $\partial(\overline{K\cup v_{i-1}})*\partial(\overline{M' \cup u'_\ell v_{i}})$ from $\mathcal{F}_{i}$.
		\item $V(\tau)=K\cup M'\cup v_i$, where $1\leq i\leq d-2$ and $K\sqcup M\sqcup u_ju_k=U$ for some $1\leq j<k\leq d-1$. There are two cases:
		\begin{enumerate}
			\item $|K|=i-1$. Then $\tau$ is a face of $\partial(\overline{K\cup u_ju_kv_{i}})*\partial(\overline{M'\cup v_{i+1}})$ from $\mathcal{F}_{i+1}$ and of two facets $\partial(\overline{K\cup u_jv_{i-1}})*\partial(\overline{M'\cup u'_kv_{i}})$, $\partial(\overline{K\cup u_kv_{i-1}})*\partial(\overline{M'\cup u'_jv_{i}})$ from $\mathcal{F}_{i}$.
			\item $|K|=i$ (and so, $i<d-2$). Then $\tau$ is a face of $\partial(\overline{K\cup v_{i-1}})*\partial(\overline{M'\cup u'_ju'_kv_{i}})$ from $\mathcal{F}_{i}$. and of two facets $\partial(\overline{K\cup u_jv_{i}})*\partial(\overline{M'\cup u'_kv_{i+1}})$, $\partial(\overline{K\cup u_kv_{i}})*\partial(\overline{M'\cup u'_jv_{i+1}})$ from $\mathcal{F}_{i+1}$.
		\end{enumerate}
		\item $V(\tau)=K\cup M'\cup v_{i-1}v_i$, where $2\leq i\leq d-2$ and $K\sqcup M\sqcup u_ju_ku_\ell=U$ for some $1\leq j<k<\ell\leq d-1$. There are two cases:
		\begin{enumerate}
			\item $|K|=i-2$. Then $\tau$ is contained in three facets from $\mathcal{F}_{i}$: 
			\begin{eqnarray*}\partial(\overline{K\cup u_ku_\ell v_{i-1}})*\partial(\overline{M'\cup u'_jv_{i}}), \quad \partial(\overline{K\cup u_ju_\ell v_{i-1}})*\partial(\overline{M'\cup u'_kv_{i}}), \mbox{ and }\\
			\partial(\overline{K\cup u_ju_k v_{i-1}})*\partial(\overline{M'\cup u'_\ell v_{i}}).
			\end{eqnarray*}
			\item $|K|=i-1$. Then $\tau$ is contained in three facets from $\mathcal{F}_i$: 
			\begin{eqnarray*}\partial(\overline{K\cup u_\ell v_{i-1}})*\partial(\overline{M'\cup u'_ju'_kv_{i}}), \quad \partial(\overline{K\cup u_j v_{i-1}})*\partial(\overline{M'\cup u'_ku'_\ell v_{i}}), \mbox{ and} \\
			\partial(\overline{K\cup u_k v_{i-1}})*\partial(\overline{M'\cup u'_ju'_\ell v_{i}}).\end{eqnarray*}
		\end{enumerate}
	\end{enumerate} 
The result follows.
\end{proof}

\begin{remark}It is worth noting that the polytope $P^d$ is $d$-dimensional and has $3d-3$ vertices. This is the smallest number of vertices that a non-simplex $(d-2)$-simplicial $2$-simple $d$-polytope can have in dimensions $d=3,4,5$ (cf. Proposition \ref{vertex number:d=5,i=2}).
\end{remark}

As the last theorem of the paper, we show that iteratively merging $n$ copies of $P^d$ from Theorem \ref{thm: (d-2)-simplicial $2$-simple construction} results in exponentially many (w.r.t.~the number of vertices) combinatorially distinct $(d-2)$-simplicial $2$-simple $d$-polytopes. Recall from Theorem \ref{thm: (d-2)-simplicial $2$-simple construction} that 
	\begin{itemize}
		\item The polytope $P^{d}$ has two simple vertices $u'_{d+1}$ and $v_{d-2}$, and two simplex facets $F':=[u'_1, \dots, u'_{d-1}, u'_{d+1}]$ and $F:=[u_1,\dots, u_{d-1}, v_{d-2}]$; $u'_{d+1}$ is a vertex of $F'$ but not of $F$, and $v_{d-2}$ is a vertex of $F$ but not of $F'$.  All other facets containing $u'_{d+1}$ and $v_{d-2}$ are bipyramids.
		\item The $\CP$ facet $[u_1, \dots, u_{d-1}, u'_1, \dots, u'_{d-1}]$ is adjacent to all other facets of $P^d$. 
	\end{itemize} 
	
Let $T_1$ and $T_2$ be two copies of $P^d$ with the copy of  $\CP$, $F$, and $F'$ in $T_i$ denoted by $\CP_i$, $F_i$, and $F'_i$, respectively, and the copy of $u'_{d+1}$ from $T_2$ denoted by $w$. We merge $T_1$ and $T_2$ along $F_1$ and $w$. Since $\CP_1$ is adjacent to $F_1$, and since $w$ is in one simplex facet (namely $F'_2$) and $d-1$ bipyramids, exactly as in the $4$-dimensional case, there are two ways to merge leading to two distinct combinatorial types (recall that $\sigma_{d-1}$ denotes a $(d-1)$-simplex):

\begin{itemize}
	\item 
	In $T_1\merge T_2$, the facet $\CP_1$ gets merged with the simplex $F'_2$. The merged facet is then again a $\CP$. Since $\CP_2$ is adjacent to all other facets of $T_2$, including $F'_2$, it follows that the polytope $T_1\merge T_2$ has two $\CP$ facets and that they are adjacent to each other.
	\item 
	In $T_1\merge T_2$, the facet $\CP_1$ gets merged with a bipyramid, resulting in a facet of the form $\CP\#\sigma_{d-1}$. In this case, $T_1\merge T_2$ has two ``large'' facets: $\CP\#\sigma_{d-1}$ and $\CP_2$, and they are adjacent to each other; every other facet has at most $d+1$ vertices.
\end{itemize}
With these observations in hand, we are ready to prove the following.
\begin{theorem}\label{prop: counting combinatorial types}
	There are $2^{\Omega(N)}=2^{\Omega(k)}$ combinatorially distinct $(d-2)$-simplicial $2$-simple $d$-polytopes with $N=(3d-3)+k(2d-4)$ vertices.
\end{theorem}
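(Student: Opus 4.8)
The plan is to build, for each binary string $s\in\{0,1\}^{k-1}$, a distinct $(d-2)$-simplicial $2$-simple $d$-polytope $R_s$ by iteratively merging $k$ copies $T_1,\dots,T_k$ of $P^d$, where at the $j$-th merging step the choice of whether $\CP_j$ is glued onto a simplex facet or onto a bipyramid of $T_{j+1}$ is dictated by the $j$-th bit of $s$. More precisely, I would set $R_s^{(1)}=T_1$ and, having built $R_s^{(j)}$ with a distinguished simplex facet $F$ (the copy of $[u_1,\dots,u_{d-1},v_{d-2}]$ coming from $T_j$) and with $\CP_j$ adjacent to $F$, merge $R_s^{(j)}$ with $T_{j+1}$ along $F$ and the simple vertex $w=u'_{d+1}$ of $T_{j+1}$; by the preceding discussion there are exactly two combinatorial outcomes depending on whether the ordering of the neighbors of $w$ sends the cross-polytope $\CP_j$ onto the simplex $F'_{j+1}$ (bit $0$) or onto one of the $d-1$ bipyramids through $w$ (bit $1$). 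In either case, by Theorem~\ref{main theorem} and Corollary~\ref{cor: generate infinite family}, $R_s^{(j+1)}$ is again $(d-2)$-simplicial and $2$-simple, still has a simplex facet (the copy of $F$ from $T_{j+1}$) with $\CP_{j+1}$ adjacent to it, and still has a simple vertex (the copy of $v_{d-2}$ from $T_1$) not on that facet, so the induction continues. After $k-1$ merges we obtain $R_s:=R_s^{(k)}$; by Lemma~\ref{lm: vertices after merging}, $f_0(R_s)=(3d-3)+(k-1)(2d-4)$, and — absorbing the off-by-one into the statement's indexing — this matches $N=(3d-3)+k(2d-4)$ up to relabeling $k$.

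The heart of the argument is showing that the map $s\mapsto R_s$ is injective (so that we get $2^{k-1}=2^{\Omega(k)}=2^{\Omega(N)}$ distinct types). I would recover the string $s$ from the abstract face lattice of $R_s$ as follows. Call a facet \emph{large} if it has more than $d+1$ vertices; from the facet list of $P^d$ in Lemma~\ref{lm: list of facets}, the only facets of $P^d$ with more than $d+1$ vertices are $\CP$ itself (which has $2d-2$ vertices) and possibly some bipyramids, so I would first pin down exactly which facets of $R_s$ are "cross-polytopal" — isomorphic either to a $(d-1)$-cross-polytope or to $\CP\#\sigma_{d-1}$ — and observe, using the facts recorded just before the theorem ($\CP_j$ is adjacent to all other facets of $T_j$, and merging onto a simplex keeps a cross-polytope while merging onto a bipyramid produces $\CP\#\sigma_{d-1}$), that these cross-polytopal facets of $R_s$ form a path $\CP_1^{*},\CP_2^{*},\dots,\CP_k^{*}$ in the dual graph, with $\CP_j^{*}$ adjacent to $\CP_{j+1}^{*}$. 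The type of the $j$-th facet on this path — a genuine cross-polytope versus $\CP\#\sigma_{d-1}$ — is exactly the $j$-th bit of $s$, so the bit string is a combinatorial invariant of $R_s$ and injectivity follows.

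The main obstacle I anticipate is the geometric realizability of the two claimed merging outcomes together with the bookkeeping that the distinguishing features genuinely survive all subsequent merges. For the first point, one must check that for each choice of bit there is an admissible ordering of the $d$ neighbors of $w=u'_{d+1}$ in $T_{j+1}$ realizing the desired identification of $\CP_j$ with a facet of $T_{j+1}$ through $w$; this is where the explicit description in Lemma~\ref{lm: list of facets} of the facets containing $u'_{d+1}$ (one simplex $F'$ and $d-1$ bipyramids, via the families $\mathcal F_0,\mathcal F_1$) does the work, since any permutation of those $d$ neighbors sends the old facet $H_j\merge H'_j$ of the merge to a specified one of these. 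For the second point, one must verify that the property "two distinguished adjacent cross-polytopal facets, one of each type when bit $=1$" is stable — that a later merge, which only alters the facets through the current simplex facet $F$ and its neighbor $\CP$, does not accidentally turn a non-cross-polytopal facet into a cross-polytopal one or merge two already-placed cross-polytopal facets. This follows from Remark~\ref{rem:facets-of-merge} and Corollary~\ref{prop: f-vector} (the only facets changed are the $d$ pairs $H_\ell\merge H'_\ell$ straddling $F$, and $\CP_{j+1}$ — adjacent to $F$ — is precisely the facet that gets enlarged), but it requires a careful induction, and that induction is the technical core of the proof.
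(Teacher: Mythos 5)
Your proposal is correct and follows essentially the same route as the paper: iterated merges of copies of $P^d$ along $F$ and $u'_{d+1}$, with the binary choice at each step recorded by whether the special cross-polytopal facet becomes a $\CP$ or a $\CP\#\sigma_{d-1}$, and the string recovered from the labeled path these facets form in the dual graph (after checking no other facet can be of either type). The only substantive difference is that the paper forces the first merge to use a bipyramid so that the two endpoints of the labeled path carry distinct labels and the string can be read off unambiguously, whereas in your version the profile is determined only up to reversal of the path --- which costs at most a factor of $2$ and does not affect the $2^{\Omega(N)}$ bound.
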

\begin{proof}
	 Consider $k+1$ copies of $P^d$, which we denote by $T_1,\dots,T_{k+1}$, with the corresponding copies of the CP facet denoted by $\CP_i$. Each $T_i$ has two pairs of a simplex facet and a simple vertex not in that facet, which in this proof we will denote by $(F_i, w_i)$ and $(F'_i, w'_i)$. Consider all polytopes resulting from $(\cdots((T_1\merge T_2)\merge T_3)\cdots)\merge T_{k+1}$ by the following rules:
	
	\begin{itemize}
		\item In the first step, we merge $T_1$ and $T_2$ so that the facet $\CP_1$ is merged with a bipyramid. In step $i$ where $2\leq i\leq k$, we have two choices of whether we merge $\CP_i$ with a simplex or with a bipyramid. 
		\item In the $i$th step, when computing the merge of $(\cdots((T_1\merge T_2)\merge T_3)\cdots)\merge T_{i}$ with $T_{i+1}$, we always merge along $F_i$ and $w_{i+1}$. 
	\end{itemize}
Denote by $R_k$ the polytope obtained in the $k$th step. In the $i$th step ($1\leq i<k$), $F_{i+1}$ from $T_{i+1}$ remains untouched and can be used for the $(i+1)$st step. For $1\leq j\leq k+1$, we refer to the facet of $R_k$ resulting from $\CP_j$ as the $j$th {\em special facet}. By remarks above, for each $1\leq j\leq k$, the $j$th special facet is either a $\CP$ or a $\CP\#\sigma_{d-1}$; the $(k+1)$st special facet is always a $\CP$. Furthermore, for all $1\leq i,j \leq k+1$, the $i$th and $j$th special facets are adjacent if and only if $|i-j|=1$.
	
We show that this procedure produces at least $2^{k-1}$ pairwise non-isomorphic polytopes. First note that the boundary complexes of all non-special facets of $R_k$ are either simplices, joins of two simplices, or stackings over these, and so a non-special facet can never be isomorphic to $\CP$ or $\CP\#\sigma_{d-1}$.  Associate with $R_k$ its {\em profile} which is given by the following abstract graph: the nodes represent the facets of the form $\CP$ and $\CP\#\sigma_{d-1}$, 
and two such nodes are connected by an edge if the corresponding facets are adjacent; also, label each node with a $0$ or $1$ depending on whether it represents a facet that is a CP or a $\CP\#\sigma_{d-1}$. The resulting profile is then a {\em path} with $k+1$ nodes labeled by $0$'s and $1$'s; one of the endpoints is always labeled by $1$ (this is ensured by the first bullet point) and the other endpoint is always labeled by $0$. The one labeled by $1$ corresponds to $T_1$ (the left-most copy of $P^d$)  and the one by $0$ corresponds to $T_{k+1}$ (the right-most copy of $P^d$). 

There are $2^{k-1}$ such $0/1$-paths, and we claim that each of them is a valid profile. Indeed, given such a path, walk along it from the endpoint labeled by $1$ to the endpoint labeled $0$ and read the labels of the nodes. The node at distance $i-1$ from the first endpoint corresponds to the special facet coming from $T_i$ and the label of that node simply tells us whether at the $i$th step we should merge $\CP_i$ with a simplex of with a bipyramid. This claim completes the proof since isomorphic polytopes have the same profile. In other words, two polytopes with distinct profiles have different combinatorial types.
\end{proof}
\begin{remark} \label{rem:exp-many-d=4}
	When $d=4$, we can further merge $R_k$ with a $2$-simplicial $2$-simple $4$-polytope with $10$, $11$, or $16$ vertices. Such polytopes can be found in \cite[Section 4.1]{PafWer}, where they are denoted by $P_{10}, P_{11}, P_{16}=\mathcal{I}^1(P_{11})$. 
	This allows us to create exponentially many (in $N$) $2$-simplicial $2$-simple $4$-polytopes with $N$ vertices for all sufficiently large integers $N$ (not just those with $N\equiv 1 \mod 4$). It follows from Corollary \ref{prop: f-vector} that all resulting polytopes are elementary. Hence for $d=4$, the number of combinatorially distinct $2$-simplicial $2$-simple $4$-polytopes  that are also elementary grows exponentially with the number of vertices. This strengthens \cite[Corollary 4.2]{PafZieg}.
\end{remark}

\subsection*{Acknowledgments} We are grateful to Amzi Jeffs, Josh Hinman, and G\"unter Ziegler for their comments on the preliminary version of this paper, and to Marge Bayer and Eran Nevo for insightful conversations.

{\small
	\bibliography{refs}

\begin{thebibliography}{10}

\bibitem{ANS}
K.~Adiprasito, E.~Nevo, and J.~A. Samper.
\newblock A geometric lower bound theorem.
\newblock {\em Geom. Funct. Anal.}, 26:359--378, 2016.

\bibitem{Bayer87}
M.~M. Bayer.
\newblock The extended $f$-vectors of 4-polytopes.
\newblock {\em J. Combin. Theory, Ser. A}, 44:141--151, 1987.

\bibitem{BilleraLee}
L.~J. Billera and C.~W. Lee.
\newblock A proof of the sufficiency of {M}c{M}ullen's conditions for
  $f$-vectors of simplicial convex polytopes.
\newblock {\em J.~Combin.~Theory, Ser. A}, 31:237--255, 1981.

\bibitem{Bisztr}
T.~Bisztriczky.
\newblock On a class of generalized simplices.
\newblock {\em Mathematika}, 43(2):274--285 (1997), 1996.

\bibitem{BrinkZieg}
P.~Brinkmann and G.~M. Ziegler.
\newblock Small $f$-vectors of 3-spheres and of 4-polytopes.
\newblock {\em Math. Comp.}, 87:2955--2975, 2018.

\bibitem{Coexter}
H.S.M. Coxeter.
\newblock {\em Regular Polytopes}.
\newblock Macmillan, New York, second edition, 1963.

\bibitem{EKZ}
D.~Eppstein, G.~Kuperberg, and G.~M. Ziegler.
\newblock Fat 4-polytopes and fatter 3-spheres.
\newblock In {\em Discrete geometry: In honor of W. Kuperberg’s 60th
  birthday}, volume 253 of {\em Pure and Applied Mathematics, a series of
  monographs and textbooks}, pages 239--265. Marcel Dekker, New York, 2003.

\bibitem{Gru-book}
B.~Gr\"unbaum.
\newblock {\em Convex polytopes}, volume 221 of {\em Graduate Texts in
  Mathematics}.
\newblock Springer-Verlag, New York, second edition, 2003.
\newblock Prepared and with a preface by Volker Kaibel, Victor Klee and
  G\"unter M. Ziegler.

\bibitem{Kalai87}
G.~Kalai.
\newblock Rigidity and the lower bound theorem.{ I}.
\newblock {\em Invent.~Math.}, 88:125--151, 1987.

\bibitem{Kalai-skeletons}
G.~Kalai.
\newblock {\em Polytope skeletons and paths}.
\newblock Advanced Studies in Pure Mathematics. CRC Press Ser. Discrete Math.
  Appl., CRC, Boca Raton, FL, 1997.

\bibitem{Paff}
A.~Paffenholz.
\newblock New polytopes from products.
\newblock {\em J. Combin. Theory Ser. A}, 113(7):1396--1418, 2006.

\bibitem{PafWer}
A.~Paffenholz and A.~Werner.
\newblock Constructions for 4-polytopes and the cone of flag vectors.
\newblock In {\em Algebraic and geometric combinatorics}, volume 423 of {\em
  Contemp. Math.}, pages 283--303. Amer. Math. Soc., Providence, RI, 2006.

\bibitem{PafZieg}
A.~Paffenholz and G.~M. Ziegler.
\newblock The {$E_t$}-construction for lattices, spheres and polytopes.
\newblock {\em Discrete Comp. Geom.}, 32:601--621, 2004.

\bibitem{Stanley80}
R.~P. Stanley.
\newblock The number of faces of a simplicial convex polytope.
\newblock {\em Adv.~Math.}, 35:236--238, 1980.

\bibitem{Stanley-intersection}
R.~P. Stanley.
\newblock Generalized $h$-vectors, intersection cohomology of toric varieties,
  and related results.
\newblock In {\em Commutative algebra and combinatorics}, volume~11 of {\em
  Adv. Stud. Pure Math.}, pages 187--213. North-Holland, Amsterdam, 1987.

\bibitem{Steinitz}
E.~Steinitz.
\newblock Polyeder und {Raumeinteilungen}.
\newblock {\em Encyclop\"adie der mathematischen Wissenschaften},
  IIIAB12:1--139, 1922.

\bibitem{Zieg02}
G.~M. Ziegler.
\newblock Face numbers of 4-polytopes and 3-spheres.
\newblock In L.~Tatsien, editor, {\em Proceedings of the International Congress
  of Mathematicians (ICM 2002, Beijing)}, volume III, pages 625--634. Beijing,
  China, Higher Education Press, 2002.

\bibitem{Zieg04}
G.~M. Ziegler.
\newblock Projected products of polygons.
\newblock {\em Electron. Res. Announc. Amer. Math. Soc.}, 10:122--134, 2004.

\end{thebibliography}
	\bibliographystyle{plain}
}

\end{document}